\DeclareMathAlphabet{\mathpzc}{OT1}{pzc}{m}{it}
\DeclareSymbolFont{SY}{U}{psy}{m}{n}
\DeclareMathSymbol{\emptyset}{\mathord}{SY}{'306}
\theoremstyle{plain}
\newtheorem{thm}{Theorem}[section]
\newtheorem{cor}[thm]{Corollary}
\newtheorem{lem}[thm]{Lemma}
\newtheorem{prop}[thm]{Proposition}
\theoremstyle{definition}
\newtheorem{defn}[thm]{Definition}
\newtheorem{rem}[thm]{Remark}
\numberwithin{equation}{section}
\def\A{{\mathcal A}}
\def\C{{\mathbb C}}
\def\D{{\mathbb D}}
\def\E{{\mathcal E}}
\def\O{{\mathcal O}}
\def\H{{\mathcal H}}
\def\HM{{\mathscr M}}
\def\K{{\mathscr K}}
\def\B{{\mathcal B}}
\def\dbar{\bar\partial}
\def\norm#1{\left\|{#1}\right\|}
\def\inp#1,#2{\left\langle{#1},{#2}\right\rangle}
\def\N{\mathbb{N}}
\def\J{\mathcal J}
\def\s{\psi}
\def\si{\sigma}
\def\l{\lambda}
\def\o{\omega}
\def\O{\Omega}
\def\f{\textbf{\textit{s}}}
\def\ra{\rightarrow}
\def\ov{\overline}
\def\a{\alpha}
\def\b{\beta}
\def\g{\gamma}
\def\G{\Gamma}
\def\e{\varepsilon}
\def\r{\tilde{\rho}}
\def\d{\delta}
\def\h{ hermitian holomorphic vector bundle}
\def\w{with respect to }
\def\beq{\begin{eqnarray}}
\def\eeq{\end{eqnarray}}
\def\beqa{\begin{eqnarray*}}
\def\eeqa{\end{eqnarray*}}
\def\del{\partial}
\def\T{\boldsymbol{T}}
\def\M{\boldsymbol{M}}
\def\bz{\boldsymbol{z}}
\def\bw{\boldsymbol{w}}
\def\rkhs{reproducing kernel Hilbert space }
\def\<{\langle}
\def\>{\rangle}
\def\Z{\mathcal{Z}}
\def\z{\zeta}
\def\nbhd{neighbourhood}
\newcommand{\be}{\begin{equation}}
\newcommand{\ee}{\end{equation}}
\newcommand{\bea}{\begin{eqnarray}}
\newcommand{\eea}{\end{eqnarray}}
\newcommand{\Bea}{\begin{eqnarray*}}
\newcommand{\Eea}{\end{eqnarray*}}
\newcounter{cnt1}
\newcounter{cnt2}
\newcounter{cnt3}
\newcommand{\blr}{\begin{list}{$($\roman{cnt1}$)$}
 {\usecounter{cnt1} \setlength{\topsep}{0pt}
 \setlength{\itemsep}{0pt}}}
\newcommand{\bla}{\begin{list}{$($\alph{cnt2}$)$}
 {\usecounter{cnt2} \setlength{\topsep}{0pt}
 \setlength{\itemsep}{0pt}}}
\newcommand{\bln}{\begin{list}{$($\arabic{cnt3}$)$}
 {\usecounter{cnt3} \setlength{\topsep}{0pt}
 \setlength{\itemsep}{0pt}}}
\newcommand{\el}{\end{list}}
\begin{document}

\title [ON UNITARY INVARIANTS OF QUOTIENT HILBERT MODULES]{ON UNITARY INVARIANTS OF QUOTIENT HILBERT MODULES ALONG SMOOTH COMPLEX ANALYTIC SETS}
\author{Prahllad Deb}
\address{Department of Mathematics, Indian Institute of Science, Bangalore - 560012, Karnataka, India}
\email{prahllad.deb@gmail.com}

\thanks{This work is supported by Senior Research Fellowship funded by Indian Institute of Science Education and Research Kolkata (IISER Kolkata). Most of the results in this paper are from the PhD thesis of the author submitted to the IISER Kolkata.}

\subjclass[2020]{Primary: 47B13, 46E22, 47B32, Secondary: 32A60, 32Q35, 53C07} 

\keywords{Hilbert modules, Quotient module, Cowen–Douglas operator, jet bundles, curvature}

\begin{abstract}

Let $\O \subset \C^m$ be an open, connected and bounded set and $\A(\O)$ be a function algebra of holomorphic functions on $\O$. Suppose that $\HM$ is a reproducing kernel Hilbert module over $\A(\O)$. In this article, we first obtain a model for the quotient Hilbert modules obtained from submodules of functions in $\HM$ vanishing to order $k$ along a smooth irreducible complex analytic set $\Z\subset\O$ of codimension at least $2$, assuming that the Hilbert module $\HM$ is in the Cowen-Douglas class over $\O$. This model is used to show that such a quotient module happens to be in the Cowen-Douglas class over $\Z\cap\O$ which then enables us to determine unitary equivalence classes of the aforementioned quotient modules in terms of the geometric invariants of hermitian holomorphic vector bundles. As an application, we obtain that unitary equivalence classes of a large family of these Hilbert modules are completely determined by those of certain quotient modules of the above kind.
\end{abstract}

\maketitle
\section{Introduction}\label{intro}
Let $\O$ be a bounded domain in $\C^m$ and $\A(\O)$ be the unital Banach algebra obtained as the norm closure \w the supremum norm on $\ov{\O}$ of all functions holomorphic on a neighbourhood of $\ov{\O}$. A complex Hilbert space $\H$ is said to be a Hilbert module over $\A(\O)$ with module map $\A(\O)\times\H \overset{\pi}\ra \H$ by point-wise multiplication such that the module action $\A(\O)\times \H \overset{\pi}\ra \H$ is norm continuous. Classical examples of Hilbert modules are the Hardy and Bergman module over the disc algebra $\A(\D)$ where $\D$ is the open unit disc in the complex plane. Two Hilbert modules $\H_1$ and $\H_2$ over $\A(\O)$ with module actions $(f,h_i)\mapsto M^{(i)}_{f}(h_i) $, $i=1,2$, are said to be isomorphic if there is a Hilbert space isomorphism $\Phi: \H_1 \ra \H_2$ such that $\Phi(M^{(1)}_{f}(h_1))=M^{(2)}_{f}(\Phi(h_1))$. The basic problem alluded to the title is as follows:

\textit{Given a Hilbert module $\HM$ and a submodule $\HM_0$ over $\A(\O)$, satisfying the exact sequence $$0 \ra \HM_0 \overset{i}{\ra} \HM \overset{\pi}{\ra} \HM_q \ra 0,$$ where $i$ is the inclusion map, $\pi$ is the quotient map and $\HM_q$ is the quotient module $\HM\ominus\HM_0$, is it possible to determine $\HM_q$ in terms of $\HM$ and $\HM_0$? One can make this general question more precise by asking if it is possible to assign some computable invariants on $\HM_q$ in terms of $\HM$ and the submodule $\HM_0$.}

In \cite{GIRHM} and \cite{GIRQMGRID}, these questions have been studied when the submodule $\HM_0$ consists of all functions in a quasi-free Hilbert module $\HM$ of rank $1$ (cf. \cite[Section 2]{QFRHM}, \cite[Page 3]{OQFHM}) over $\A(\O)$ vanishing along a smooth hypersurface $\Z$ and a complex algebraic variety of complete intersection of finitely many smooth hypersurfaces in $\O$, respectively, to obtain geometric invariants for such quotient modules in terms of the curvature of the line bundle associated to $\HM$ and $\HM_0$. By means of jet construction, \cite[Page 372]{OQMAM} a model for the quotient modules obtained from the submodules $\HM_0^{(k)}$ of functions in $\HM$ vanishing of order $k\geq 1$ along $\Z$ has been provided which is then used to obtain geometric invariants of such quotient modules generalizing the results in \cite{GIRHM}. Later in \cite{EQHMII}, a complete set of unitary invariants for such quotient modules with an arbitrary $k$ has been described. For $k=2$, these invariants turn out to be the tangential and transverse components of the curvature of the line bundle $E_{\HM}$ relative to hypersurface $\Z$ and the second fundamental form for the inclusion $E_{\HM}\subset J^{(2)}_1E_{\HM}$ where $J^{(2)}_1E_{\HM}$ is the second order jet bundle of $E_{\HM}$ relative to $\Z$ (cf. \cite[Section 3]{EQHMII}). More recently, Chen and Douglas have introduced the localization of a commuting tuple of operators in the Cowen-Douglas class over $\O$ on $\Z$ and obtained unitary invariants for them in \cite{ALOTCD}. Furthermore, they relate these localizations to the quotient modules obtained from the submodules of vector valued holomorphic functions on $\O \subset \C^m$ vanishing to order $k\geq 2$ on a smooth hypersurface $\Z$. These studies on quotient modules naturally lead to consider the case when the submodules consist of vector valued holomorphic functions vanishing to higher order along a smooth complex analytic set of codimension greater than $1$.  We must first recall the definition of the Cowen-Douglas class. 

Let $D_{\T} : \H \to \H \oplus \cdots \oplus \H$ be the operator given by the formula: $D_{\T} h = \left(T_1 h,\hdots,T_m h\right)$, $h \in \H.$
Following \cite{OPOSE}, we say that a commuting $m$-tuple of bounded linear operators $\T=(T_1,\hdots,T_m)$ on $\H$ is in $\mathrm B_r(\O)$ if 
\begin{itemize} 
\item $\dim \ker D_{\T - z I}=r$, $z\in \O$;
\item  $\text{ran} D_{\T - z I}$ is closed in $\H\oplus\cdots\oplus\H$;
\item the linear span of the vectors in $\ker D_{\T - z I}$, $z\in \O$ is dense in $\H$. 
\end{itemize}
Following the ideas of \cite{OPOSE}, it is easy to establish a one to one correspondence between the unitary equivalence class of commuting $m$-tuples of operators in $\mathrm B_r(\O)$ and equivalence class of  the corresponding {\h} $E_{\T}:=\{(w, x)\in\Omega\times\mathcal H: x\in \ker{D}_{{\T}-w}\}$ of rank $n$ over $\O$. The equivalence class of the {\h} is the local equivalence of the hermitian structure. These vector bundles are distinguished, among others, by the property that the hermitian structure on the fibre over $z\in \O$ is induced from the inner product of a fixed Hilbert space $\H$. It has been proved in \cite{GBKCD} that the corresponding $m$-tuple of operators $\T$ is simultaneously unitarily equivalent to the adjoint of the $m$-tuple of multiplication operators $\M=(M_{z_1},\hdots,M_{z_m})$ by the coordinate functions on a \rkhs $\H_K$ consisting of $\C^r$- valued holomorphic functions on $\O^*:=\{\ov{z}:z\in\O\}$, where $K$ is the reproducing kernel on $\O^*$.

In general, the adjoint of the tuple $\M$ need not be in ${\mathrm B}_r(\Omega)$.  However, it can be ensured by putting additional conditions (cf. \cite{GBKCD}) on $K$. One set of such conditions are the following:
\begin{itemize}
\item[(i)] for any $z\in\O^*$, the evaluation mapping $ev_z:\H_K\ra\C^r$ is bounded and surjective;
\item[(ii)] $\H_K$ has the \textit{Gleason property}, that is, for any $z\in\O^*$ and $f\in\H_K$, $f(z)=0$ if and only if $f(w)=(w_1-z_1)f_1+\cdots+(w_m-z_m)f_m$ for some $f_1,\hdots,f_m\in\H_K$ and any $w\in\O^*$.
\end{itemize}

The following result shows that these are, indeed, the necessary and sufficient for a tuple $\T=(T_1,\hdots,T_m)$ of commuting operators to be in $\mathrm B_r(\O)$.

\begin{thm}\cite[Theorem 2]{ALOTCD}\label{gleason}
A tuple of operators $\T=(T_1,\hdots,T_m)$ lies in $\mathrm B_r(\O)$ if and only if $\T$ is unitarily equivalent to the adjoint of the tuple $\M=(M_{z_1},\hdots,M_{z_m})$ of multiplication operators on a $\C^r$- valued holomorphic function space over $\O^*$ satisfying (i) and (ii) as listed above. 
\end{thm}
From now on, we say that a Hibert module $\HM$ is in $\mathrm B_r(\O)$ if $(M^*_{z_1},\hdots,M^*_{z_m})$ is in $\mathrm B_r(\O)$ where $(M_{z_1},\hdots,M_{z_m})$ is the tuple of multiplication operators on $\HM$ corresponding to the coordinate functions on $\O^*$.

In the first half of the present article, the emphasis on treating the quotient modules more generally, compared to previous work in special cases (codimension $1$, scalar valued functions, etc.), in a uniform way. However, as in the previous case, the basic assumption made throughout is that the subvariety $\Z$ is smooth. Thus after a local change of variables at a given point in $\O$, $\Z$ becomes locally isomorphic to a linear subvariety (codimension $d\geq 2$) described by the vanishing of first $d$ variables yielding geometric invariants of the quotient modules \w the local coordinates in a neighbourhood of a given point (by abstract principles this often suffices to obtain global invariants). We make use of this description of the subvariety $\Z$ in Section \ref{SM} to obtain a canonical model for the submodule of functions in $\HM$ vanishing to order $k$ on $\Z$. The main crux in obtaining this model is to prove the Proposition \ref{coch} which can also be obtained using the multi-variable version of the Fa di Bruno formula. However, the advantage of the proof given in this article is in providing the change of variable matrix for the $N$-jet of a holomorphic function explicitly which we use in later sections. 

This model of submodules then enables us to obtain a model for the quotient modules (Theorem \ref{qm}) in Section \ref{QM}. The main idea, originating from \cite{OQMAM}, is to consider a module $J(\HM)$ of $k$-jets relative to $\Z$ of all functions in $\HM$ and identify the quotient module $\HM_q$ as the restriction of $J(\HM)$ to $\Z$. One of the basic problems is whether this new module $J(\HM)|_{\Z}$ over $\A(\Z)$ belongs to $\mathrm B_{r|A|}(\Z)$ where $A=\{\a\in(\N\cup\{0\})^m:|\a|<k\}$. As one of the main results in Section \ref{QM}, it is shown that $$\HM\in\mathrm B_r(\O)\Longrightarrow J(\HM)|_{\Z}\in\mathrm B_{r|A|}(\Z)$$ at least for a natural class of Hilbert modules in the Cowen-Douglas class in Theorem \ref{qmodinCDcls}. 

Thus for this class of Hilbert modules, the corresponding quotient modules $\HM_q$ give rise to a {\h}s over $\Z$ as well, but it turns out that only the bundle structure cannot determine the unitary equivalence class of such quotient modules. The flag structure of this new bundle induced by the module action $-$ which now involves the nilpotent action obtained from the compression of the tuple of multiplication operators by the vanishing coordinates of $\Z~-$ must be accounted as explained in Remark \ref{rem} and Remark \ref{rjbi}. Classifying such quotient modules $-$ for vector valued functions and the higher codimensional vanishing set $-$ requires new ideas and techniques extending older ones from complex geometry of jet bundles and moving frames.

The vector bundle associated with the module $J(\HM)|_{\Z}$ turns out to be the $k$-th order jet bundle relative to $\Z$ of the vector bundle associated with the module $\HM$. In Section \ref{JB}, using the techniques of normalised frame \cite[Lemma 2.3]{DACM} of these vector bundles, we first show in Theorem $\ref{jean1}$ that two such quotient modules are unitarily equivalent if and only if there exists a constant isometric jet bundle isomorphism between the corresponding jet bundles restricted to the submanifold $\Z$ with the aid of normalized frame. This fact is then used to determine the unitary invariants of the aforementioned quotient modules in Theorem \ref{mthm}. These results extend the results in \cite{EQHMI,EQHMII,ALOTCD} to the case of quotient modules obtained from submodules of vector valued functions vanishing along a smooth analytic variety of arbitrary codimension. 

Finally, in Theorem \ref{localization} we show, for two Hilbert modules $\HM,\tilde{\HM}\in\mathrm B_r(\O)$ with $r<m$ and any connected complex submanifold $\Z\subset \O$ of codimension $d$ with $r\leq d\leq m$, that the unitary equivalence of quotient modules obtained from the submodules of functions in $\HM$ (respectively, $\tilde{\HM}$) vanishing of order $r+2$ along $\Z$ forces the Hilbert modules to be unitarily equivalent. It is also  pointed out, for $m\leq r$ and $d\neq m$, that there exist Hilbert modules $\HM$ and $\tilde{\HM}$ which are not equivalent although these quotient modules are. Thus it generalizes one of the main results, namely Theorem 1.6 in \cite{CGOT}, in multi-variable domains as well as provides the main result (Theorem 3.12) in \cite{LOCHM} as a special case. 
\subsection{Notations and Conventions} Before proceeding further, let us fix some notations which will be useful throughout the paper.

\begin{enumerate}
\item $\mathcal{O}(\O)$ denotes the set of all holomorphic functions on $\O$.

\item In this article, we only consider those Hilbert modules $\HM\subset\mathcal{O}(\O)^{\oplus r}$ for which the evaluation mappings $ev_z:\HM\ra\C^r$ are bounded from $\HM$ onto $\C^r$ for $z\in\O$ and the \textit{Gleason property} holds. We also assume that the ring of polynomials $\C[z_1,\hdots,z_m]$ is contained in $\HM$.

\item\label{Res} Let $\HM$ be a Hilbert module over $\A(\O)$ consisting of holomorphic functions on $\O$ and $\HM_0 \subset \HM $ be a subspace which is also a Hilbert module over $\O$. Assume that $\A(\O)$ acts on $\HM$ by point-wise multiplication and $\HM_q$ be the quotient module $\HM\ominus\HM_0$. Let $U \subset \O$ be an open connected subset. Then from the identity theorem for holomorphic functions of several complex variables we have $\HM \simeq_{\A(\O)} {\HM}|_{U}$, $\HM_0 \simeq_{\A(\O)}{\HM_0}|_{U}$, and hence $\HM_q \simeq_{\A(\O)}{\HM_q}|_{U}$ where $\HM|_{U}=\{h|_U: h \in \HM \}$. We, therefore, may cut down the domain $\O$ to a suitable open subset $U$, if necessary, and pretend $U$ to be $\O$.
\item From now on, for any multi-indices $\a=(\a_1,\hdots,\a_d)$, we use following notations
\beq\label{not}
 \del^{\a}(\text{respectively},~\dbar^{\a})
~:=~ \frac{\del^{|\a|}}{\del{z_1}^{\a_1}\cdots\del{z_d}^{\a_d}}\left(\!\text{respectively},~\frac{\dbar^{|\a|}}{\dbar{z_1}^{\a_1}\cdots\dbar{z_d}^{\a_d}}\!\right)
\eeq
unless and otherwise stated, where $\del_i=\frac{\del}{\del z_i}$, $i=1,\hdots,d$.
\end{enumerate}

\section{Preliminaries on \h s}\label{prelim}

Let $E$ be a hermitian holomorphic vector bundle of rank $n$ over a complex manifold $M$ of dimension $m$. A connection on the bundle $E$ is a differential operator $D:\E^0(M,E)\ra \E^{1}(M,E)$ of order $1$ with the defining property
$$D(f\si)=df \otimes\si + f\cdot D\si $$ for any smooth function $f$ on $M$ and $\si \in \E^0(M,E)$ where $df$ stands for usual exterior derivative of $f$, $\E^0(M,E)$ is the set of all smooth sections of $E$ and $\E^1(M,E)$ is the set of all smooth $E$ valued $1$ forms on $M$. It is well known (cf. \cite{PAG}) that there is  unique connection $D$ - the Chern connection - on $E$ which is compatible with both the complex structure and the hermitian metric. It with respect to a local holomorphic frame $s=\{e_1,\hdots,e_n\}$ of $E$, takes the form
\beq D(s)=\del H(s)\cdot H(s)^{-1}\eeq where $H(s)$ is the Gramian matrix of the frame $s$. From now on, by a connection on a {\h} we will mean the Chern connection. The curvature tensor $\K$ of the vector bundle $E\ra M$ \w the Chern connection is an element in $\E^2(M)\otimes \text{Hom}(E,E)$ and takes the form 
\beq\label{locK}\K(s)=\dbar(\del H(s)\cdot H(s)^{-1})\eeq \w the local holomorphic frame $s$ of $E$, where $\E^2(M)$ is the set of all smooth $2$ forms on $M$ and Hom$(E,E)$ is the vector bundle over $M$ with Hom$(E_p,E_p)$ as the fibre over any point $p\in M$. It follows that in a local coordinate system of $M$ one can write $\K$ as \beq\label{locKc}\K(s)=\sum_{i,j=1}^m \K_{i\ov{j}}(s)dz_i\wedge d\ov{z}_j=\sum_{i,j=1}^m \dbar_j(\del_i H(s)\cdot H(s)^{-1})dz_i\wedge d\ov{z}_j,\eeq where $s$ is a local holomorphic frame of $E$.

Note that unlike the curvature tensor $\K$, the connection operator $D$ is not $C^{\infty}$ linear, that is, $D$ is not a bundle map. Nevertheless, it is easy to verify that the commutator of connection with a bundle map is a bundle map. More generally the following fact holds:
\begin{lem}\cite[Lemma 2.10]{CGOT}
Let $E$ and $\tilde{E}$ be $C^{\infty}$ vector bundles over some smooth manifold $X$ with connections $D$ and $\tilde{D}$, respectively. If $\Phi: E \ra \tilde{E}$ is a $C^{\infty}$ bundle map then so is $\tilde{D}\Phi-\Phi D$ as map from $E$ to $\tilde{E}\otimes T^*(X)$, where $T^*(X)$ is the cotangent bundle of $X$.
\end{lem}

\noindent Thus, for bundle maps $\Phi: E \ra \tilde{E}$, there exist bundle maps $\Phi_{z_i},\Phi_{\ov{z}_j}: E \ra \tilde{E}$ satisfying \beq\tilde{D}\Phi-\Phi D=\sum_{i,j=1}^m(\Phi_{z_i}\otimes dz_i+\Phi_{\ov{z}_j}\otimes d\ov{z}_j).\eeq In particular, for a bundle map $\Phi: E \ra E$ we write $$[D,\Phi]:=D\Phi-\Phi D.$$ Let $\Phi(s),\Phi_{z_i}(s),\Phi_{\ov{z}_j}(s)$ be the matrix representation of $\Phi, \Phi_{z_i},\Phi_{\ov{z}_j}$, respectively, \w the local holomorphic frame $\f$. It turns out that
\begin{align}\label{covd1}\Phi_{z_i}(s) &=\del_{z_i}\Phi(s)-[\del_{z_i}H(s)\cdot H(s)^{-1},\Phi(s)]\,\,\,\text{and}\\
&\label{covd2}\Phi_{\ov{z}_i}(s)=\del_{\ov{z}_i}\Phi(s).\end{align}
In the following lemma, we calculate the covariant derivatives of curvature tensor. The proof of the following lemma, for $d=1$, is well known  (cf. \cite[Proposition 2.18]{CGOT}, \cite[Lemma 20]{LOCHM}). Although the similar set of arguments used there with more than one variable yields the proof in our case, we present a sketch of the proof for the sake of completeness.

\begin{lem}\label{cume}
Let $E$ be a hermitian holomorphic vector bundle over $\O$ in $\C^m$ with a fixed holomorphic frame $\f:=\{s_1,\hdots,s_r\}$ whose Gramian matrix is $H$. Then
\begin{enumerate}
\item[(i)] for $1 \leq d \leq m$, $\a,\b \in (\N \cup \{(0)\})^d$, and $i,j=1,\hdots,d$, the $r\times r$ matrices \linebreak $(\K_{i\ov{j}}(S))_{{\bz}^{\a}{\ov{\bz}}^{\b}}$ can be expressed in terms of $H^{-1}$ and ${\del}^{p}{\dbar}^{q}H$ where $z^{\a}={z_1}^{\a_1}\cdots{z_d}^{\a_d}$, ${\ov{z}}^{\b}={\ov{z}_1}^{\b_1}\cdots{\ov{z}_d}^{\b_d}$, $\del^p=\del_1^{p_1}\cdots\del_d^{p_d}$, $\dbar^q=\dbar_1^{q_1}\cdots\dbar_d^{q_d}$ and $0 \leq \sum_{l=1}^d p_l \leq |\a|+1$, $0 \leq \sum_{l=1}^d q_l \leq |\b|+1$, $l=1,\hdots,d$;
\item[(ii)] given $1 \leq d \leq m$, $\a,\b \in (\N \cup \{(0)\})^d$, ${\del}^{\a}{\dbar}^{\b}H$  can be written in terms of $H^{-1}$, ${\del}^{p}H$, ${\dbar}^{q}H$ and $(\K_{i\ov{j}})_{{z}^{r}{\ov{z}}^{s}}$, for $0 \leq p_l \leq \a_l$, $0 \leq q_l \leq \b_l$, $0 \leq \sum_{l=1}^d r_l \leq |\a|-1$, $0 \leq \sum_{l=1}^d s_l \leq |\b|-1$, $l=1,\hdots,d$, $i,j=1,\hdots,d$.
\end{enumerate}
\end{lem}

\begin{proof}
Let $E$, $S$ and $H$ be as above. Then, for $j=1,\hdots,m$, we have
\beq\label{dm} \del_{z_j}H^{-1}=-H^{-1}\cdot\del_{z_j}H\cdot H^{-1},\text{ and}\eeq
\beq\label{dbm} \del_{\ov{z}_j}H^{-1}=-H^{-1}\cdot\del_{\ov{z}_j}H\cdot H^{-1}.\eeq
Now from the definition of curvature we obtain, for $i,j=1,\hdots,d$,
\beqa
\K_{i\ov{j}} &=& \del_{\ov{z}_j}(\del_{z_i}H\cdot H^{-1})\\
&=&\del_{\ov{z}_j}\del_{z_i}H\cdot H^{-1}-\del_{z_i}H\cdot H^{-1}\cdot\del_{\ov{z}_j}H\cdot H^{-1}
\eeqa
which also implies that
\beq\label{ddbm}\del_{\ov{z}_j}\del_{z_i}H=\K_{i\ov{j}}H+
\del_{z_i}H\cdot H^{-1}\cdot \del_{\ov{z}_j}H.\eeq
Then an induction argument using Leibniz rule together with the equations $\eqref{dm}$ and $\eqref{dbm}$ yield the desired expression in (i). Further, (ii) can be obtained as before by using Leibnitz rule and formulas $\eqref{dm},\eqref{dbm} \text{ and }\eqref{ddbm}$ with the help of mathematical induction on $|\a|$ and $|\b|$.
\end{proof}

\section{The Submodule ${\HM}_0$}\label{SM}

Let $\HM\subset\mathcal{O}(\O)^{\oplus r}$ be the Hilbert module over $\A(\O)$ for which the evaluation mappings $ev_z:\HM\ra\C^r$ are bounded from $\HM$ onto $\C^r$ for $z\in\O$ and the \textit{Gleason property} holds. Denote the elements of $\HM$ as $h=(h_1,\hdots,h_r)^{\text{tr}}$ where $h_j\in \A(\O)$, $1 \leq j \leq r$. In this section, we define the submodule $\HM_0$ of $\HM$. So we begin by recalling some elementary definitions regarding complex analytic varieties.

\begin{defn}
A subset $\Z \subset\O$ is called an analytic set if, for any point $p \in \O$, there is a connected open {\nbhd} $U$ of $p$ in $\O$ and finitely many holomorphic functions $\phi_1,\hdots,\phi_d$ on $U$ such that $$U\cap\Z=\{q \in U: \phi_j(q)=0,\text{ }1\leq j\leq d\}.$$
\end{defn}

\begin{defn}\label{sav}
An analytic set $\Z\subset \O$ is said to be regular of codimension $d$ at $p \in \Z$ if there is an open {\nbhd} $U_p \subset \O$ and holomorphic functions $\phi_1,\hdots,\phi_d$ on $U_p$ such that
\begin{enumerate}
\item [(a)] $\Z \cap U_p = \{q \in \O : \phi_1(q)=\cdots=\phi_d(q)=0\}$,
\item [(b)] the rank of the Jacobian matrix of the mapping $q \mapsto (\phi_1(q),\hdots,\phi_d(q))$ at $p$ is $d$.
\end{enumerate}
\end{defn}

An analytic set is said to be irreducible if it can not be decomposed as a union of two analytic sets. It is known in the literature that any smooth analytic set is irreducible if and only if it is connected \w the subspace topology \cite[page 20]{PAG}.


In the following proposition, we point out that such an analytic set $\Z$ is a regular complex submanifold of codimension $d$ in $\O$.

\begin{prop}\cite[page 161]{FHFTCM}\label{defun}
An analytic set $\Z$ is regular of codimension $d$ at $p\in M$ in a complex manifold $M$ of dimension $m$ if and only if there is a complex coordinate chart $(U,\phi)$ of $M$ such that $B:=\phi(U)$ is an open subset of $\C^m$ with $\phi(p)=0$ and $\phi(U\cap\Z)=\{\l=(\l_1,...,\l_m) \in B:\l_1=\cdots=\l_d=0\}$.
\end{prop}

\begin{rem}\label{defc}
In this article, we are interested in smooth irreducible analytic sets $\Z$ of codimension $d$ in some bounded domain $\O$ in $\C^m$. So from the Definition $\ref{sav}$ and the Proposition $\ref{defun}$ we have, for each point $p \in \Z$, there is a coordinate chart $(U,\phi)$ at $p$ of $\O$ satisfying following properties:
\begin{enumerate}
\item [(a)] $\phi(p)=0$ with $\phi(U\cap\Z)=\{\l=(\l_1,...,\l_m) \in B:\l_1=\cdots=\l_d=0\}$,
\item [(b)] the rank of the Jacobian matrix of the mapping $q \mapsto (\phi_1(q),\hdots,\phi_d(q))$ at $p$ is $d$.
\end{enumerate}
\end{rem}

We are now about to define the order of vanishing of a holomorphic function along a smooth analytic set. Our definition is essentially a direct generalization of the definition given in \cite{OQMAM} to define the order of vanishing of a holomorphic function along a smooth complex hypersurface.


\begin{defn}\label{ordofvan}
Let $\O$ and $\Z$ be as above and $f:\O\ra\C$ be a holomorphic function. Then $f$ is said to have zero of order at least $k$ at some point $p\in \Z$ if there exists a coordinate chart $(U,\phi)$ at $p$ of $\O$ satisfying the properties (a) and (b) in the Remark $\ref{defc}$ such that \beq\label{ordv}[f]\in I_{\Z}^{k}\eeq where $[f]$ is the germ of $f$ at $p$ and $I_{\Z}$ is the ideal in $\mathcal{O}_{m,p}$ generated by $[\phi_1],\hdots,[\phi_d]$.
\end{defn}

\begin{rem}\label{coind}
Note that the above definition is independent of the choice of coordinate chart at $p$. Indeed, for two such charts $(U_1,\phi_1)$ and $(U_2,\phi_2)$ with the properties listed in Remark $\ref{defc}$, $\phi_1$ and $\phi_2\circ\phi_1^{-1}$, respectively, induce isomorphisms $\Phi_1:\mathcal{O}_{m,p}\ra\mathcal{O}_{m,0}$ defined by $\Phi_1([g])=[g\circ\phi_1^{-1}]$ and $\Phi_2:\mathcal{O}_{m,0}\ra\mathcal{O}_{m,0}$ with $\Phi([g\circ\phi_1^{-1}])=[g\circ\phi_2^{-1}]$. As a consequence, it turns out that $f$ satisfies $\eqref{ordv}$ if and only if $[f\circ\phi_1^{-1}]\in I_1^{k}$  which is again equivalent to the fact that $[f\circ\phi_2^{-1}]\in I_2^{k}$ where $I_j$ is the ideal generated by the germs $[\l^j_1],\hdots,[\l^j_d]$, $j=1,2$, with local coordinates $\l^j_1,\hdots,\l^j_m$ of $\C^m$ corresponding to $\phi_j$.
\end{rem}

\begin{defn}
The submodule ${\HM}_0$ is defined as
$${\HM}_0:=\{h \in \HM: h_j\,\text{ has zero of order at least }k\text{ at every }q \in \Z,\,\,1\leq j\leq r\}.$$
\end{defn}

\begin{lem}\label{lem}
Let $\O$ be a bounded domain in $\C^m$, $\Z$ be a complex submanifold in $\O$ and $f: \O \ra \C$ be a holomorphic function. Then, for each point $p \in \Z$, $f$ vanishes to order $k$ at $p$ along $\Z$ if and only if 
$$ \del^{\a}_{\l}(f \circ {\phi}^{-1})|_{\phi(U \cap \Z)} := \frac{\del^{|\a|}}{\del{\l_1}^{\a_1}\cdots\del{\l_d}^{\a_d}}(f \circ {\phi}^{-1})|_{\phi(U \cap \Z)}=0\,\,\,\text{for}\,\,\,0 \leq |\a| \leq k-1,$$
where $\a=(\a_1,\hdots,\a_d)$ and $|\a|=\a_1+\cdots+\a_d$, for some coordinate chart $(U,\phi)$ as in the Remark $\ref{defc}$.
\end{lem}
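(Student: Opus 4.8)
The plan is to reduce the statement to a purely local algebraic fact about the local ring $\mathcal{O}_{m,0}$, namely the characterization of the powers $I^k$ of the ideal $I = \langle \l_1, \dots, \l_d \rangle$ in terms of vanishing of partial derivatives transverse to $\Z$. By Remark~\ref{coind} the order of vanishing is independent of the coordinate chart, so we may fix a chart $(U,\phi)$ as in Remark~\ref{defc}, write $g := f \circ \phi^{-1}$, which is holomorphic on $B = \phi(U)$, and work entirely with $g$ near $0 \in \C^m$. Under this identification, $f$ vanishes to order $k$ at $p$ along $\Z$ precisely when $[g] \in I^{k-1}$ but $[g] \notin I^k$, where now $I = I_1$ is the ideal generated by the germs $[\l_1], \dots, [\l_d]$. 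So the lemma is equivalent to: $[g] \in I^{k}$ if and only if $\del_\l^\a g|_{\{\l_1 = \cdots = \l_d = 0\}} = 0$ for all $\a \in (\N \cup \{0\})^d$ with $|\a| \le k - 1$ (here $\del_\l^\a$ involves only the first $d$ coordinate derivatives). Granting this equivalence, the ``largest integer'' formulation in the statement follows immediately: if $k$ is the largest integer with all such transverse derivatives of order $\le k-1$ vanishing, then $[g] \in I^{k}$ but $[g] \notin I^{k+1}$, i.e.\ $[g] \in I^{(k+1)-1} \setminus I^{k+1}$, which is the defining condition for order of vanishing $k$; and conversely.

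The core algebraic claim I would prove by expanding $g$ in its convergent Taylor series about $0$. Split the coordinates as $\l = (\l', \l'')$ with $\l' = (\l_1, \dots, \l_d)$ the transverse variables and $\l'' = (\l_{d+1}, \dots, \l_m)$ the tangential ones; then $\{ \l' = 0 \}$ is (the image of) $\Z \cap U$. Write $g(\l', \l'') = \sum_{\beta} c_\beta(\l'') \, (\l')^\beta$, where $\beta$ ranges over $(\N \cup \{0\})^d$ and each $c_\beta(\l'')$ is holomorphic in $\l''$ near $0$; this grouping is legitimate by absolute convergence of the Taylor series on a polydisc. The key observations are: (1) $[g] \in I^{k}$ if and only if $c_\beta \equiv 0$ for every $\beta$ with $|\beta| \le k - 1$ --- the ``if'' direction is clear since then $g = \sum_{|\beta| \ge k} c_\beta (\l')^\beta \in I^k$, and the ``only if'' direction follows because any element of $I^k$ is a finite sum $\sum_{|\gamma| = k} (\l')^\gamma a_\gamma(\l)$ with $a_\gamma$ holomorphic, so its Taylor coefficient of any monomial $(\l')^\beta (\l'')^\delta$ with $|\beta| < k$ vanishes; and (2) $\del_\l^\a g|_{\{\l' = 0\}} = \a! \, c_\a(\l'')$ for $\a \in (\N \cup \{0\})^d$, since differentiating the series term-by-term (justified on the polydisc) and then setting $\l' = 0$ kills every term except $\beta = \a$. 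Combining (1) and (2): $[g] \in I^k \iff c_\beta \equiv 0$ for all $|\beta| \le k-1 \iff \del_\l^\a g|_{\{\l'=0\}} = 0$ for all $|\a| \le k-1$. That is exactly the equivalence needed.

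The main technical point to handle carefully is the ``only if'' half of observation (1): one must argue that membership of $[g]$ in the ideal power $I^k$ forces the low-order (in $\l'$) Taylor coefficients to vanish identically as functions of $\l''$, not merely at a point. This is where the splitting of variables and the uniqueness of the Taylor expansion do the work --- writing a germ in $I^k$ explicitly as $\sum_{|\gamma|=k}(\l')^\gamma a_\gamma$ and comparing coefficients of $(\l')^\beta(\l'')^\delta$ shows the coefficient is zero whenever $|\beta| < k$. A secondary point is bookkeeping: one should note that $\a!\, c_\a(\l'')$ being the restriction $\del_\l^\a g|_{\phi(U\cap\Z)}$ matches the notation $\del^\a_\l(f\circ\phi^{-1})|_{\phi(U\cap\Z)}$ in the statement, and that the nonzero constant $\a!$ does not affect the vanishing condition. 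Everything else --- chart independence, the translation from $f$ to $g$, and the passage from the ideal-power formulation to the ``largest integer'' formulation --- is either already established (Remark~\ref{coind}) or immediate. So I expect no serious obstacle; the proof is a clean application of the structure of the Taylor series relative to the splitting $\l = (\l', \l'')$.
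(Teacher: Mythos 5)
The paper itself offers no proof of Lemma~\ref{lem} --- it is stated as an immediate consequence of the definition of the order of vanishing --- so there is nothing to compare line by line; your Taylor-series argument is exactly the standard verification one would supply. The core of your proposal is correct and complete: splitting $\lambda=(\lambda',\lambda'')$, expanding $g=f\circ\phi^{-1}$ as $\sum_\beta c_\beta(\lambda'')(\lambda')^\beta$, and proving that $[g]\in I^{j}$ if and only if $c_\beta\equiv 0$ for $|\beta|\le j-1$, which by term-by-term differentiation is equivalent to $\partial^{\alpha}_{\lambda}g|_{\{\lambda'=0\}}=0$ for $|\alpha|\le j-1$, is the right local algebraic fact, and both directions of your observation (1) are argued correctly (the regrouping of the series into $\sum_{|\gamma|=j}(\lambda')^\gamma a_\gamma$ and the comparison of Taylor coefficients). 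One small point you should add: the germ condition at $p$ a priori gives $c_\beta\equiv 0$ only near $0$ in the $\lambda''$-variables, while the Lemma asserts vanishing on all of $\phi(U\cap\Z)$; this is repaired by the identity theorem after shrinking $U$ so that $U\cap\Z$ is connected.

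There is, however, a genuine bookkeeping problem in your final translation, and you should not pass over it silently. Two sentences after correctly quoting the paper's definition (``order $k$ at $p$'' means $[g]\in I^{k-1}$ but $[g]\notin I^{k}$), you assert that $[g]\in I^{(k+1)-1}\setminus I^{k+1}=I^{k}\setminus I^{k+1}$ ``is the defining condition for order of vanishing $k$''; with the printed definition this is the condition for order $k+1$, so your reduction contradicts the definition you started from. The root cause is an off-by-one inconsistency in the paper itself: combined with your (correct) characterization of $I^{j}$, the printed definition says that order $k$ corresponds to vanishing of the transverse derivatives with $|\alpha|\le k-2$, whereas Lemma~\ref{lem}, the subsequent description of $\HM_0$, and the jet construction (multi-indices $|\alpha|\le k-1$, $N=\binom{d+k-1}{k-1}-1$) all use the convention that ``vanishing to order $k$'' means $\partial^{\alpha}$-derivatives with $|\alpha|\le k-1$ vanish along $\Z$, i.e.\ $[g]\in I_{\Z}^{k}\setminus I_{\Z}^{k+1}$. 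A correct write-up must choose one convention explicitly: either adopt $[g]\in I_{\Z}^{k}\setminus I_{\Z}^{k+1}$ as the meaning of ``order $k$'' (noting the discrepancy with the printed definition, which is evidently the intended reading given the rest of the paper, cf.\ also Remark~\ref{coind}), in which case your chain of equivalences proves the Lemma as stated; or take the printed definition literally, in which case your argument proves the Lemma with $k-1$ in place of $k$ in the derivative bound. As it stands, the last sentence of your first paragraph conflates the two and is not a valid step.
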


In general, there are no global defining functions $\phi_1,\hdots,\phi_d$ for a smooth irreducible analytic set $\Z$. Since the modules and the submodules of interest can be localized (see at the end of the Section $\ref{intro}$), it is enough to work with an open set $U \subset \O$ intersecting $\Z$. So from now on, we consider a fixed {\nbhd} $U \subset \O$ of $p$ with $U \cap \Z \neq \varnothing$ and defining functions $\phi_1,\hdots,\phi_d$ satisfying conditions (a) and (b) in Remark $\ref{defc}$. Since the Jacobian matrix of the mapping $z \mapsto (\phi_1(z),\hdots,\phi_d(z))$ has rank $d$ at $p$, by rearranging the coordinates in $\C^m$, we can assume that $\mathcal{D}_1(p):=(({\del}_j{\phi}_i|_{p}))^d_{i,j=1}$ is invertible. Note that $\mathcal{D}_1(z)$ is invertible on some {\nbhd} of $p$ in $U$. Abusing the notation, let us denote this {\nbhd} by the same letter $U$. Consider the mapping $\phi:U \ra \phi(U)$ defined as $\phi(z)=(\phi_1(z),\hdots,\phi_d(z),z_{d+1},\hdots,z_m)$ and observe that $\phi$ is a bi-holomorphism from $U$ onto $\phi(U)$ with $\phi(p)=0$ and $\phi(U\cap\Z)=\{\l=(\l_1,...,\l_m) \in \phi(U):\l_1=\cdots=\l_d=0\}$. Thus once we fix a chart as above and pretend $U$ to be $\O$, the submodule $\HM_0$ may be described as $$\HM_0 = \left\{h \in \HM: \frac{\del^{|\a|}}{\del{\l_1}^{\a_1}\cdots\del{\l_d}^{\a_d}}(h_j \circ {\phi}^{-1})(\l)|_{\phi(\Z)}=0\,\,\,\text{for}\,\,\,0 \leq |\a| \leq k-1,\,\,1\leq j \leq r\right\}.$$


At this stage, we introduce a definition that separates the coordinate chart described above among others and will be useful throughout this article.

\begin{defn}\label{ad}
Let $\O$ be a domain in $\C^m$ and $\Z \subset \O$ be a complex submanifold of codimension $d$. Then, for any point $p \in \Z$, we call a coordinate chart $(U,\phi)$ of $\Z$ around $p$ an $\mathit{admissible}$ coordinate chart if the bi-holomorphism $\phi:U \ra \phi(U)$ takes the form $\phi(z)=(\phi_1(z),\hdots,\phi_d(z),z_{d+1},\hdots,z_m)$ with $\phi(p)=0$ and $\phi(U\cap\Z)=\{\l=(\l_1,...,\l_m) \in \phi(U):\l_1=\cdots=\l_d=0\}$ for some holomorphic functions $\phi_1,\hdots,\phi_d$ on $U$.
\end{defn}

Note that even in this local description of the submodule, there is a choice of complementary directions to the submanifold $\Z$ involved. The following proposition ensures that in this local picture two different sets of complementary directions to $\Z$ give rise to equivalent submodules. We must recall some elementary definitions and properties of the ring of polynomial functions on a finite dimensional complex vector space which will be useful in the course of the proof of the following proposition.

For any complex vector space $V$ of dimension $d$, the ring of polynomial functions $\C[V]$ on $V$ consists of all functions $f:V\ra \C$ such that, for any basis $\{e_1,\hdots,e_d\}$ of $V$, $f$ takes the form $f(\a_1 e_1+\cdots+\a_d e_d)=\phi(\a_1,\hdots,\a_d)$ for all $(\a_1,\hdots,\a_d)\in \C^d$ and some polynomial $\phi \in \C[x_1,\hdots,x_d]$. In other words, $f$ is a polynomial into the elements $x_1=e^*_1,\hdots,x_d=e^*_d$ of the dual basis. It is then clear that
\beq\label{eqpf} \C[V] \simeq S(V^*) \simeq \C[x_1,\hdots,x_d]\eeq where $S(V^*)$ is the graded vector space of all symmetric tensors on $V^*$. Note that $\C[V]$ is an algebra over $\C$.


A polynomial function $f$ on $V$ is said to be homogeneous of degree $t$ if $f(\a v)={\a}^t f(v)$ for all $\a \in \C$ and $v \in V$. We denote $\C[V]_t$ the subspace of $\C[V]$ of homogeneous polynomial functions of degree $t$. In particular, $C[V]_0=\C$, $\C[V]_1=V^*$ and $\C[V]_t$ is canonically identified in the first isomorphism in $\eqref{eqpf}$ with the $t$-th symmetric power $S^t(V^*)$, and it can also be identified with the subspace of $\C[x_1,\hdots,x_d]$ generated by the monomials $x_1^{t_1}\cdots x_d^{t_d}$ with $t_1+\cdots+t_d=t$ via the second isomorphism of $\eqref{eqpf}$.

\begin{prop}\label{coch}
Let $\O$ be a bounded domain in $\C^m$, $\Z$ be a complex submanifold in $\O$ and $f: \O \ra \C$ be a holomorphic function. Then, for each point $p \in \Z$ and $n \in \N$, there exists an admissible coordinate chart $(U,\phi)$ of $\O$ at $p$ such that
$$\del_\l^{\a}(f\circ\phi^{-1})(\l)|_{\phi(p)}=0,~0 \leq |\a| \leq n~\text{if and only if}~{\del}^{\a}f(z)|_{p}=0,~0 \leq |\a| \leq n$$
where $\a=(\a_1,\hdots,\a_d) \in (\N \cup \{0\})^d$, $\l=(\l_1,\hdots,\l_m)$ denotes the standard coordinates on $\phi(U)\subset \C^m$.
\end{prop}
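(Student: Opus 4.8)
The plan is to exploit the explicit form of an admissible chart $\phi(z)=(\phi_1(z),\dots,\phi_d(z),z_{d+1},\dots,z_m)$, where $\phi$ is a biholomorphism from $U$ onto $\phi(U)$ with $\phi(p)=0$, and to track how the two families of derivative conditions transform under the change of coordinates $\psi:=\phi^{-1}$. Write $g:=f\circ\phi^{-1}=f\circ\psi$, so that $f=g\circ\phi$. Since only the first $d$ coordinates of $\phi$ are nontrivial and $\phi_j(p)=0$ for $1\le j\le d$, the remaining coordinates $z_{d+1},\dots,z_m$ are frozen in both statements; the whole question reduces to a statement about the $d$-variable germs at the origin, exactly as in Remark~$\ref{coind}$ and Lemma~$\ref{lem}$. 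By Lemma~$\ref{lem}$, the condition ${\del}^{\a}f(z)|_{p}=0$ for all $|\a|\le k-1$ is equivalent to $[f]\in I_{\Z}^{k}$ at $p$, where $I_{\Z}$ is the ideal generated by $[\phi_1],\dots,[\phi_d]$; and likewise $\del_\l^{\a}(f\circ\phi^{-1})(\l)|_{\phi(p)}=0$ for all $|\a|\le k-1$ is equivalent to $[g]\in I_1^{k}$ at $0$, where $I_1$ is the ideal generated by $[\l_1],\dots,[\l_d]$. So it suffices to show the ideal membership $[f]\in I_\Z^k \iff [g]\in I_1^k$.

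This is essentially a consequence of the fact that $\phi$ is a biholomorphism carrying the ideal $I_\Z$ (generated by $[\phi_1],\dots,[\phi_d]$) to the ideal $I_1$ (generated by $[\l_1],\dots,[\l_d]$): the pullback isomorphism $\psi^\ast:\mathcal{O}_{m,p}\to\mathcal{O}_{m,0}$, $[h]\mapsto[h\circ\psi]$, sends $[\phi_j]$ to $[\phi_j\circ\psi]=[\l_j]$ and hence sends $I_\Z$ onto $I_1$, and therefore $I_\Z^k$ onto $I_1^k$. Thus $[f]\in I_\Z^k$ iff $\psi^\ast[f]=[g]\in I_1^k$, which is what we want. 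The argument is word-for-word the coordinate-independence argument already sketched in Remark~$\ref{coind}$, and one could alternatively phrase it entirely there. For completeness one may instead give a direct computational proof: expand $g=f\circ\psi$, note $\psi(0)=p$ and that the Jacobian block of $\psi$ in the first $d$ variables is invertible at $0$ (since $\mathcal{D}_1(p)$ is), and prove the forward and backward implications by induction on $k$ using the chain rule — at order $0$ both conditions say $f(p)=0$, and the inductive step moves one order of vanishing across the invertible linear change of normal directions, the off-diagonal Jacobian terms contributing only lower-order derivatives which vanish by the inductive hypothesis.

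The only mild subtlety — and the place where one must be careful rather than the genuinely hard part — is that the two derivative conditions are \emph{not} literally the same finite set of linear conditions on the Taylor coefficients of $f$: passing through $\phi$ mixes the normal coordinates, so $\del_\l^\a(f\circ\phi^{-1})$ at $0$ is a nontrivial linear combination of derivatives ${\del}^\b f(p)$ with $|\b|\le|\a|$ (the transverse part) together with holomorphic coefficients. This is exactly why the clean way through is the ideal-theoretic reformulation via Lemma~$\ref{lem}$, which makes the equivalence transparent and chart-free; the combinatorial bookkeeping of the chain rule is avoided. No separate appeal to the structure of $\C[V]$ or homogeneous polynomial functions recalled just before the statement is needed for this particular proposition — that machinery is presumably deployed in the proof of the next result (Proposition~$\ref{prop1}$) on normalized frames — so I would keep the proof here short: reduce to $d$ variables, invoke Lemma~$\ref{lem}$ to restate both sides as ideal memberships $[f]\in I_\Z^k$ and $[f\circ\phi^{-1}]\in I_1^k$, and conclude by the pullback isomorphism argument of Remark~$\ref{coind}$.
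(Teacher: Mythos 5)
Your ideal-theoretic reduction, which carries the main argument, has a genuine gap: it conflates a pointwise condition with a condition holding along the submanifold. Membership $[f]\in I_{\Z}^{k}$ in $\mathcal{O}_{m,p}$ is equivalent, in the straightened coordinates, to the vanishing of all transverse derivatives $\del_{\l}^{\a}(f\circ\phi^{-1})$, $0\le|\a|\le k-1$, identically on $\phi(U\cap\Z)$ near $\phi(p)$ --- which is what Lemma \ref{lem} and the definition of order of vanishing record --- whereas Proposition \ref{coch} asserts an equivalence of derivative conditions at the single point $p$, respectively $\phi(p)$. Neither of your two claimed equivalences is true: take $m=2$, $d=1$, $\Z=\{z_1=0\}$, $p=0$ and $f(z_1,z_2)=z_2$; then $f(0)=\del_1 f(0)=\cdots=\del_1^{\,k-1}f(0)=0$, but $[f]\notin I_{\Z}$, hence certainly $[f]\notin I_{\Z}^{k}$ (the same example, read in the $\l$-coordinates, refutes the second equivalence). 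Consequently ``it suffices to show $[f]\in I_{\Z}^{k}\iff[g]\in I_{1}^{k}$'' is not a valid reduction: that equivalence is indeed true, by the pullback isomorphism of Remark \ref{coind}, but it only yields the simultaneous vanishing of the two families of derivatives \emph{along} $\Z$, which is weaker than the stated proposition; the pointwise version is what the paper actually needs later (the identity \eqref{cvf} behind it is applied at every $q\in\Z$, and even at arbitrary $z\in U$ when comparing jet bundles built from two admissible charts).

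The proof can be completed along either of the two lines you mention but leave undeveloped. The clean germ argument freezes the tangential variables instead of passing to $I_{\Z}$: writing $z=(z',z'')$ with $z'=(z_1,\hdots,z_d)$, an admissible $\phi$ fixes $z''$ and $\phi(p)=0$ forces $p''=0$, so both sides of the proposition involve only the slices $F(z'):=f(z',0)$ and $G(\l'):=(f\circ\phi^{-1})(\l',0)$; moreover $G=F\circ\sigma$, where $\sigma(\l')$ is given by the first $d$ components of $\phi^{-1}(\l',0)$ and is a biholomorphism near $0$ with $\sigma(0)=p'$. The two derivative conditions say precisely that $G$, respectively $F$, lies in the $k$-th power of the maximal ideal of $\mathcal{O}_{d,0}$, respectively $\mathcal{O}_{d,p'}$, and these correspond under composition with $\sigma$; it is the maximal ideal of the $d$-variable slice ring, not $I_{\Z}\subset\mathcal{O}_{m,p}$, that is relevant. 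Alternatively, your fallback chain-rule induction is essentially the paper's own proof: the paper establishes \eqref{cvf}, expressing the vector of derivatives $\del^{l}f(z)$ as the block lower-triangular matrix $A_{k,\phi}(z)$ applied to the vector of derivatives $\del_{\l}^{l}(f\circ\phi^{-1})(\phi(z))$, with diagonal blocks $\mathcal{D}_t(z)$ the symmetric powers of the normal block of the Jacobian (this is exactly where the $\C[V]$/symmetric-tensor material is used, contrary to your guess), and invertibility of $A_{k,\phi}(z)$ gives the equivalence. Either route is fine; the reduction via Lemma \ref{lem} as written is not.
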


\begin{proof}
Let us consider an admissible coordinate system $(U,\phi)$ (Definition $\ref{ad}$) at $p \in \Z \subset \O$ of $\O$, that is, $\phi:U \ra \phi(U)$ defined as $\phi(z)=(\phi_1(z),\hdots,\phi_d(z),z_{d+1},\hdots,z_m)$.


For $q \in U$, let $V_q$ and $V_{\phi(q)}$ be tangent spaces at $q$ and $\phi(q)$ to $(\C^d\times\{0\})\cap U$ and $(\C^d\times\{0\})\cap \phi(U)$, respectively. Denote the standard ordered basis of $V_q$ by $\B_1(q):=\{\frac{\del}{\del z_j}|_q \}_{j=1}^d$ and that of $V_{\phi(q)}$ by $\B_1(\phi(q)):=\{\frac{\del}{\del \l_j}|_{\phi(q)} \}_{j=1}^d$. Consider the linear map $L_1(q):V^*_q \ra V^*_{\phi(q)}$ whose matrix \w the dual bases $\{dz_j\}_{j=1}^d$ and $\{d\l_j\}_{j=1}^d$ of $\B_1(q)$ and $\B_1(\phi(q))$, respectively, is the matrix $((\del_j\phi_i(q)))_{i,j=1}^d$.


In view of the first isomorphism in $\eqref{eqpf}$, we observe that $L_1(q)$ canonically induces linear mappings $L_t(q):S^t(V^*_q)\ra S^t(V^*_{\phi(q)}) $ defined by $$L_t(q)(dz_1^{\a_1}\otimes\cdots\otimes dz_d^{\a_d})=L_1(q)(dz_1)^{\a_1}\otimes\cdots\otimes L_1(q)(dz_d)^{\a_d}$$ where $\a=(\a_1,\hdots,\a_d)\in (\N\cup\{0\})^d$ with $|\a|=\a_1+\cdots+\a_d=t$ and by $dz_j^{\a_j}$ (respectively, by $L_1(q)(dz_j)^{\a_j}$) we mean that the $\a_j$-th symmetric power of $dz_j$ (respectively, $L_1(q)(dz_j)$).


Let $\B_t(q):=\{dz_1^{\a_1}\otimes\cdots\otimes dz_d^{\a_d}:|\a|=t\}$ and $\B_t(\phi(q)):=\{d\l_1^{\a_1}\otimes\cdots\otimes d\l_d^{\a_d}:|\a|=t\}$ be bases for vector spaces $S^t(V^*_q)$ and $S^t(V^*_{\phi(q)})$, respectively, and make them ordered bases \w the order induced by the colexicographic order on the set $\{\a \in (\N \cup \{0\})^d:|\a|=t\}$. Denote the matrix of $L_t(q)$ represented \w the basis $\B_t(q)$ and $\B_t(\phi(q))$ as $\mathcal{D}_t(q)$, for $t\in \N \cup \{0\}$. Note that since $L_t(p)$ is a vector space isomorphism for each $t\in \N \cup \{0\}$ the matrices $\mathcal{D}_t(p)$'s are invertible.


In this set up, we claim, for $z \in U$ with $\phi(z)=\l\in\phi(U)$, that
\beq\label{cvf}
A_{n,\phi}(z)\cdot
\left(
\begin{array}{c}
 f \circ \phi^{-1}(\l)\\
\del_{\l_1}f \circ \phi^{-1}(\l)\\
\vdots\\
\del^{\a}_{\l}f \circ \phi^{-1}(\l)\\
\vdots\\
\del^n_{\l_d}f \circ \phi^{-1}(\l)
\end{array}
\right)
&=&
\left(
\begin{array}{c}
 f(z)\\
\del_1 f(z)\\
\vdots\\
\del^{\a}f(z)\\
\vdots\\
\del^n_d f(z)
\end{array}
\right)
\eeq
where $\del^{\a}_{\l}$ stands for the differential operator $\frac{\del^{|\a|}}{\del\l_1^{\a_1}\cdots\del\l_d^{\a_d}}$, $A_{n,\phi}(z)$ is the block lower triangular matrix with $1$, $\mathcal{D}_1(z)$, $\hdots$, $\mathcal{D}_n(z)$ as the diagonal blocks. Here the order used in writing the coloumn vectors is obtained from the colexicographic order on the set $\{\a \in (\N \cup \{0\})^d: |\a|\leq k\}$.


We prove this claim with the help of mathematical induction on $n$. Note that the base case is a direct consequence of the change of variables formula. Let the equation $\eqref{cvf}$ hold true for $t=l$ with $1 \leq l \leq n$, that is, for $\a=(\a_1,\hdots,\a_d)$ with $|\a|=l$,
\Bea
\del^{\a}f(z) &=& \sum_{|\b|=l}(\mathcal{D}_l(z))_{\a\b}\del_{\l}^{\b}f\circ\phi^{-1}(\l)+
\text{ other terms}
\Eea
where $\mathcal{D}_l(z)$ is the matrix $(((\mathcal{D}_l(z))_{\a\b}))_{|\a|=l,|\b|=l}$. Differentiating both sides of this equation \w the $z_j$-th coordinate and using the Leibnitz rule we have, for an arbitrary but fixed point $q \in U$,
\Bea
\del_j\del^{\a}f(z)|_{z=q} &=& \sum_{|\b|=l}\del_j(\mathcal{D}_l(z))_{\a\b}|_{z=q}\del_{\l}^{\b}f\circ\phi^{-1}(\phi(q))\\ &+&  \sum_{|\b|=l}(\mathcal{D}_l(q))_{\a\b}\left(\sum_{s=1}^d \del_j\phi_s(q)\del_{\l_s}\right)\del_{\l}^{\b}
f\circ\phi^{-1}(\l)|_{\l=\phi(q)} + \del_j(\text{ other terms})\\
&=& \sum_{|\b|=l}(\mathcal{D}_l(q))_{\a\b}\del_{\l}^{\b}\left(\sum_{s=1}^d \del_j\phi_s(q)\del_{\l_s}f\circ\phi^{-1}(\l)\right)\big|_{\l=\phi(q)}\\ &+& (\text{ other terms involving }\del_{\l}^{\b}f\circ\phi^{-1}(\phi(q))\text{ with }|\a|\leq l)
\Eea
Note that the rings of polynomial functions $S(V^*_q)$ and $S(V^*_{\phi(q)})$ can be canonically identified with the algebras of linear partial differential operators with constant coefficients, namely, $\G_q:S(V^*_q)\simeq_{\C}\{\sum_{\a}a_{\a}\del_1^{\a_1}\cdots\del_d^{\a_d}:a_{\a}\in \C\}$ under the correspondence $dz_1^{\a_1}\otimes\cdots\otimes dz_d^{\a_d}\overset{\G_q}{\mapsto} \del^{\a_1}_1\cdots\del^{\a_d}_d$ and similarly, $\G_{\phi(q)}:S(V^*_{\phi(q)})\simeq_{\C}\{\sum_{\a}a_{\a}\del_{\l_1}^{\a_1}\cdots\del_{\l_d}^{\a_d}:a_{\a}\in \C\}$ via the mapping $d\l_1^{\a_1}\otimes\cdots\otimes d\l_d^{\a_d}\overset{\G_{\phi(q)}}{\longmapsto} \del^{\a_1}_{\l_1}\cdots\del^{\a_d}_{\l_d}$. Consequently, it yields that
\Bea
\del^{\a+\varepsilon_j}f(q) &=& \sum_{|\b|=l}(\mathcal{D}_l(q))_{\a\b}\del_{\l}^{\b}\left(\sum_{s=1}^d \del_j\phi_s(q)\del_{\l_s}f\circ\phi^{-1}(\l)\right)\big|_{\l=\phi(q)}\\ &+& (\text{ other terms involving }\del_{\l}^{\b}f\circ\phi^{-1}(\phi(q))\text{ with }|\a|\leq l)\\
&=&(\G_{\phi(q)}L_1(q)\G_q^{-1}(\del_1))^{\a_1}\cdots
(\G_{\phi(q)}L_1(q)\G_q^{-1}(\del_j))^{\a_j}\cdots
(\G_{\phi(q)}L_1(q)\G_q^{-1}(\del_d))^{\a_d}\\ && (\G_{\phi(q)}L_1(q)\G_q^{-1}(\del_j))f\circ\phi^{-1}(\l)\big|_{\l=\phi(q)}\\ &+& (\text{ other terms involving }\del_{\l}^{\b}f\circ\phi^{-1}(\phi(q))\text{ with }|\a|\leq l)\\
&=& (\G_{\phi(q)}L_1(q)\G_q^{-1}(\del_1))^{\a_1}\cdots
(\G_{\phi(q)}L_1(q)\G_q^{-1}(\del_j))^{\a_j+1}\cdots
(\G_{\phi(q)}L_1(q)\G_q^{-1}(\del_d))^{\a_d}\\ && f\circ\phi^{-1}(\l)\big|_{\l=\phi(q)}+(\text{ other terms involving }\del_{\l}^{\b}f\circ\phi^{-1}(\phi(q))\text{ with }|\a|\leq l)\\
&=& \sum_{|\b|=l+1}(\mathcal{D}_{l+1}(q))_{(\a+\varepsilon_j)\b}\del_{\l}^{\b}f\circ\phi^{-1}(\phi(q))\\ &+& (\text{ other terms involving }\del_{\l}^{\b}f\circ\phi^{-1}(\phi(q))\text{ with }|\a|\leq l)
\Eea
which verifies the claim. Thus, $A_{n,\phi}(z)$ is invertible if and only if $\mathcal{D}_1(z)$, $\hdots$, $\mathcal{D}_n(z)$ are simultaneously invertible which is the case for $z \in U$. Hence it completes the proof.
\end{proof}

\begin{rem}
We point out that the proposition above can also be obtained from the multi-variable version of the Faa di Bruno formula for composite functions as suggested by the anonymous referee. However, the advantage of the proof above is in computing the invertible matrix $A_{n,\phi}(z)$ satisfying the equation \eqref{cvf}. We will use this invertible matrix again in Section \ref{JB} which is why we prefer to keep this proof here than the one by Faa di Bruno formula. Nevertheless, we provide a brief sketch of this proof following Faa di Bruno formula for sake of completeness.

We now elaborate upon this. Let $\O$, $\Z$, and $f$ be as in the Proposition \ref{coch}, and $(U,\phi)$ be an admissible coordinate chart at $p$. The multi-variable Faa di Bruno formula, for the composite function $f\circ\phi^{-1}$ and multi-indices $\mu,\mu^{ij},\nu,\nu^{ij}$ of length $m$, states that
$$\del_{\l}^{\nu}(f\circ\phi^{-1})(\l)=\sum_{1\leq |\mu|\leq n}(\del^{\mu}f)(\phi^{-1}(\l))\sum_{\mu^{ij}\neq 0}\sum_{\nu^{ij}}C(\mu^{ij},\nu^{ij})\prod_{i=1}^j(\del_{\l}^{\nu^{ij}}\phi^{-1}(\l))^{\mu^{ij}}$$
where $1\leq i\leq j\leq n=|\nu|$, $(\del_{\l}^{\nu^{ij}}\phi^{-1}(\l))^{\mu^{ij}}$ denotes a monomial product, and for each $j$, the summation is subject to the restriction $\sum_{i=1}^j\l^{ij}=\l,~\sum_{i=1}^j|\mu^{ij}|\nu^{ij}=\nu.$ $C(\mu^{ij},\nu^{ij})$ are the combinatorial coefficients which are explicitly known in \cite{FADIBRUNO}. Denote $\a''=(\a_{d+1},\hdots,\a_m)$. Note that being an admissible coordinate chart, $\phi^{-1}_{d+1},\hdots,\phi^{-1}_m$ are independent on first $d$ variables and hence $\del_{\l}^{\a}\phi^{-1}_k(\l)=0$ whenever $\a''=0$ and $d+1\leq k\leq m$.

Let us consider $\nu$ such that $\nu''=0$ which is the case in the previous proposition and observe from the condition $\sum_{i=1}^j|\mu^{ij}|\nu^{ij}=\nu$ that all $\nu^{ij}$ also satisfy $(\nu^{ij})''=0$ as $|\mu^{ij}|>0$. It turns out, for $i\leq j$ and $\nu$ with $\nu''=0$, that the only $\mu^{ij}$, which can occur in the formula above, are those with $(\mu^{ij})''=0$. Also, it implies that all such $\mu$, whose $\mu''=0$, can occur in the right hand side of the formula above completing the proof of the forward direction of the proposition above. Finally, a similar argument as above with $f\circ\phi^{-1}$ and $\phi$ in place of $f$ and $\phi^{-1}$, respectively, completes the proof of Proposition \ref{coch}. 
\end{rem}

From the proposition above and Remark $\ref{coind}$ we have another characterization of the submodule ${\HM}_0$ as follows:
$${\HM}_0=\{h \in \HM:{\del_1}^{\a_1}\cdots{\del_d}^{\a_d}(h_j)|_{\Z}=0,\,0 \leq |\a| \leq k-1,\,\,1 \leq j \leq r\}.$$

\begin{rem}\label{remres}
Note that following $\eqref{Res}$ in Section $\ref{intro}$, it is enough to restrict the module $\HM$ and the submodule $\HM_0$ to an admissible coordinate chart $(U,\phi)$ around some point $p \in \Z \subset \O$. Also, it can be seen that the unitary equivalence classes of these submodules remain the same under the change of variables $\phi$. We now elaborate upon this fact.
\end{rem}

Let us consider the module, $\phi^*(\HM|_U)$ which is, by definition, $$\phi^*(\HM|_U):=\{f|_{U}\circ\phi^{-1}:f \in \HM\}$$ and note that it is a module over $\A(\O)$ with the module action $g\cdot(f|_{U}\circ\phi^{-1}):=(gf)|_{U}\circ\phi^{-1}$, for $g \in \A(\O)$. It is evident that the modules $\phi^*(\HM|_U)$ and $\HM$ are isomorphic via the isomorphism $\Phi:\HM \ra \phi^*(\HM|_U)$ defined by $f \mapsto f|_{U}\circ\phi^{-1}$. So, defining an inner product as $$\<f|_{U}\circ\phi^{-1},g|_{U} \circ\phi^{-1}\>_{\phi^*(\HM|_U)}:=\<f,g\>_{\HM},$$ it can be seen that $\phi^*(\HM|_U)$ is unitarily equivalent to $\HM$ as Hilbert modules. Since $\HM$ is a reproducing kernel Hilbert module with a reproducing kernel, say, $K$ so is $\phi^*(\HM|_U)$ with the kernel function $K'$ defined by $K'(u,v)=K(\phi^{-1}(u),\phi^{-1}(v))$, for $u,v \in \phi(U)$. Also, the multiplication operators $M_{z_1},\hdots,M_{z_m}$ on $\HM$ are simultaneously unitarily equivalent to $M_{u_1},\hdots,M_{u_m}$ on $\phi^*(\HM|_U)$. 
Finally, $\eqref{Res}$ in Section $\ref{intro}$ together with the Proposition \ref{coch} ensure that the submodules $\HM_0$ and $\phi^*R(\HM_0)$ are also unitarily equivalent via the same map as mentioned earlier. Consequently, we have the following Proposition.

\begin{prop}\label{eococh}
Let $\O$ be a bounded domain in $\C^m$, $\Z$ be a complex connected submanifold in $\O$ and $\HM^1$, $\HM^2$ be two Hilbert modules of rank $r$ over $\A(\O)$. Let $\HM^1_0$ and $\HM^2_0$ be submodules of $\HM^1$ and $\HM^2$, respectively, consisting of holomorphic functions vanishing of order at least $k$ along $\Z$. Assume that $(U,\phi)$ is an admissible coordinate system around some point $p \in \Z$. Then $\HM^1_0$ is unitarily equivalent to $\HM^2_0$ as Hilbert modules if and only if $\phi^*(\HM^1_0|_U)$ is unitarily equivalent to $\phi^*(\HM^2_0|_U)$. In other words, the following diagram commutes.

$$\begin{CD}
\HM^1_0 @>R>> \HM^1_0|_U @>\Phi>> \phi^*(\HM^1_0|_U)\\
@VVV @VVV @VVV\\
\HM^2_0 @>R>> \HM^2_0|_U @>\Phi>> \phi^*(\HM^2_0|_U)
\end{CD}$$
\end{prop}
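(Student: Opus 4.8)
The proof is a transport-of-structure argument that simply repackages the three unitary module isomorphisms produced in Remark~\ref{remres} and in the discussion immediately preceding the statement. For $i=1,2$ put $\Psi_i:=\Phi\circ R:\HM^i_0\to\phi^*(\HM^i_0|_{\text{res}U})$, where $R$ is the restriction map $h\mapsto h|_U$ and $\Phi$ is the map $f\mapsto f\circ\phi^{-1}$. The plan is to show each $\Psi_i$ is a unitary module isomorphism; granting this, the proposition is immediate, for if $T:\HM^1_0\to\HM^2_0$ is a unitary module isomorphism then $\Psi_2\,T\,\Psi_1^{-1}$ is one from $\phi^*(\HM^1_0|_{\text{res}U})$ onto $\phi^*(\HM^2_0|_{\text{res}U})$, while $\Psi_2^{-1}\,S\,\Psi_1$ transports a given $S$ back the other way. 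The two horizontal composites $\Phi\circ R$ are precisely the arrows of the stated diagram, and its commutativity is nothing but the identity $\Psi_2\,T=(\Psi_2\,T\,\Psi_1^{-1})\,\Psi_1$ applied to the two squares.

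Thus the entire content is the claim about $\Psi_i$, which I would verify one factor at a time. The restriction map $R:\HM^i\to\HM^i|_{\text{res}U}$ is a unitary module isomorphism by item~\eqref{Res} of Section~\ref{prelim}: the identity theorem for holomorphic functions of several variables makes it a Hilbert space isomorphism for the stated inner product, and it plainly intertwines the pointwise $\A(\O)$-actions. What must be checked is that $R$ carries $\HM^i_0$ onto $\HM^i_0|_{\text{res}U}=R(\HM^i)_0$, i.e.\ that vanishing to the prescribed order along $\Z$ is detected on $U\cap\Z$; one inclusion is trivial, and for the other one uses the admissible chart and Lemma~\ref{lem} to see that the relevant functions $\del^\a_\l(h_j\circ\phi^{-1})$ restrict to holomorphic functions on the connected manifold $\Z$ vanishing on the nonempty open subset $U\cap\Z$, hence vanishing on all of $\Z$. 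The map $\Phi:\HM^i_0|_{\text{res}U}\to\phi^*(\HM^i_0|_{\text{res}U})$ is unitary by the very definition of the inner product on the target and intertwines the $\A(\O)$-actions because the module action on $\phi^*(\HM^i_0|_{\text{res}U})$ was \emph{defined} by $g\cdot(f\circ\phi^{-1})=(gf)\circ\phi^{-1}$ (equivalently, by the computation relating $M_{z_i}$ to the coordinate multiplications on $\phi^*(\HM^i|_{\text{res}U})$ recorded above); and Proposition~\ref{coch}, together with Remark~\ref{coind} for chart-independence, identifies this image with the submodule of $\phi^*(\HM^i|_{\text{res}U})$ cut out by the standard-coordinate vanishing conditions along $\phi(U\cap\Z)$. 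Composing the two factors yields the claim.

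I expect the only genuinely non-formal ingredient to be the step $R(\HM^i_0)=R(\HM^i)_0$ just indicated: one needs the order of vanishing along $\Z$ to be both chart-independent (Remark~\ref{coind}, Proposition~\ref{coch}) and locally detectable on $\Z$, the latter resting on the identity theorem applied on $\Z$ itself and hence on the hypothesis that $\Z$ is connected (equivalently, irreducible). Everything else --- the unitarity and module-intertwining of $R$ and $\Phi$, and the passage between the two equivalences via conjugation by $\Psi_1,\Psi_2$ --- is bookkeeping already carried out, piecemeal, in the paragraphs preceding the statement.
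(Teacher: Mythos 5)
Your proposal is correct and follows essentially the same route as the paper, which itself derives this proposition directly from Remark~\ref{remres}, the discussion of $\phi^*(\HM|_{\text{res}U})$, and Proposition~\ref{coch} rather than giving a separate argument: the unitarity and module-intertwining of $R$ and $\Phi$, the identification $R(\HM^i_0)=R(\HM^i)_0$, and conjugation by the composites are exactly the ingredients the paper invokes. Your explicit justification of $R(\HM^i_0)=R(\HM^i)_0$ via the identity theorem on the connected submanifold $\Z$ fills in a step the paper only asserts as clear, which is a welcome addition.
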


\begin{rem}\label{submodasmp}
In view of the previous proposition and Definition \ref{ad} we note that it is enough to consider the submanifold $\Z$ as the coordinate plane of codimension $d$. Therefore, from now on we shall only consider domains $\O$ which contain the origin, $$\Z=\{z\in\O:z_1=\hdots=z_d=0\}$$ and the submodule $\HM_0$ as
$${\HM}_0=\{h \in \HM:{\del_1}^{\a_1}\cdots{\del_d}^{\a_d}(h_j)|_{\Z}=0,\,0 \leq |\a| \leq k-1,\,\,1 \leq j \leq r\}.$$
\end{rem}


\section{Quotient Module ${\HM}_q$}\label{QM}

Let $\O$ and $\Z$ be as described in the previous section. Consider the exact sequence \beq\label{exact} 0 \ra \HM_0 \overset{i}{\ra} \HM \overset{P}{\ra} \HM_q \ra 0\eeq where $i$ is the inclusion map and $P$ is the quotient map. Then ${\HM}_q:=\HM/{\HM}_0=\HM \ominus {\HM}_0$. For $f \in \A(\O)$ and $h \in \HM$, we define the module action on $\HM_q$ as
\beq fP(h)=P(fh)\eeq making it a Hilbert module. Here we mean $(fh_1,\hdots,fh_r)$ by $fh$. In this section, we are interested in obtaining a model for these quotient modules $\HM_q$. 

Following the ideas of \cite{OQMAM}, we first describe the jet construction relative to the submanifold $\Z$. Let $A=\{\a \in (\N \cup  \{0\})^d:|\a|< k\}$ be an ordered set equipped with the colexicographing ordering, $\{\e_{\a}\}_{\a \in A}$ be the standard ordered basis of $\C^{|A|}$, $\{\sigma_i\}_{i=1}^r$ be the standard ordered basis of $\C^r$ and recall that $\del_1,...,\del_d$ are the partial derivative operators \w $z_1,...,z_d$ variables, respectively. For $h \in \HM$, we note that $h=(h_1,\hdots,h_r)^{\text{tr}}= \sum_{i=1}^r h_i \otimes \si_i$, and define
$$ \textbf{h}:=\sum_{i=1}^r\left(\sum_{\a\in A}\del^{\a} h_i \otimes \e_{\a}\right)\otimes \sigma_i.$$ Consider the space $J(\HM):=\{\textbf{h}:h \in \HM\} \subset \HM\otimes (\C^{|A|}\otimes\C^r)$.  Here and throughout this article, we follow the notation below for the tensor product: $$A\otimes B=((Ab_{ij}))_{i,j=1}^q$$ for any matrix $A$ and $B=((b_{ij}))_{i,j=1}^q$. Consequently, we have the mapping \beq\label{jcon} J:\HM \ra J(\HM) \text{ defined by } h \mapsto \textbf{h}.\eeq Since $J$ is injective we define an inner product on $J(\HM)$ making $J$ to be an unitary transformation as follows $$\<J(h_1),J(h_2)\>_{J(\HM)}:=\<h_1,h_2\>_{\HM}.$$

Since each evaluation mapping $ev_z:\HM\ra\C^r$ is bounded, $\HM$ is a \rkhs implying that $J(\HM)$ is also a \rkhs with the reproducing kernel $JK$ as computed in the following proposition.

\begin{prop}\label{jet ker}
$J(\HM)$ is a reproducing kernel Hilbert space with the reproducing kernel $JK:\O\times\O\ra M_{|A|r}(\C)$ defined as
\beq\label{matJK}
 (JK)^{\a\b}_{ij}(z,w)=\del^{\a}\dbar^{\b} K_{ij}(z,w)\,\,\,\text{for}\,\,\,\a,\b \in A,\,\,1\leq i,j \leq r,
\eeq
where $JK(z,w)$ is an $r\times r$ block matrix with $(((JK)^{\a\b}_{ij}(z,w)))_{\a,\b\in A}$ as the $|A|\times |A|$ block and $(JK)^{\a\b}_{ij}(z,w)$ is the $\a\b-$th element of the $ij-$th block.
\end{prop}

\begin{proof}
We begin with the observation that 
$$ JK(.,w)\e_{\b}\otimes\si_j = \sum_{\a\in A}\sum_{i=1}^r\del^{\a}\dbar^{\b}K_{ij}(.,w)\e_{\a}\otimes\si_i = J(\dbar^{\b}K(.,w)\si_j)$$
which shows that $JK(.,w)\e_{\b}\otimes\xi \in J(\HM)$ for all $\xi\in\C^r$ and $w\in\O$. It remains to show that $JK$ has the reproducing property which is, by definition,
\beq\<\textbf{h},JK(.,w)\z\>_{J(\HM)}=\<\textbf{h}(w),\z\>_{\C^{|A|}\otimes\C^r},\eeq for $\textbf{h} \in J(\HM)$ and $\z \in M_{|A|r}(\C)$. Note that, for $w \in \O$, $\a \in A$ and $1 \leq j \leq r$, 
\Bea
\<\textbf{h}, JK(.,w)\e_{\b}\otimes\si_j\>_{J(\HM)} &=& \<\textbf{h},J(\dbar^{\b}K(.,w)\si_j)\>_{J(\HM)}\\
&=& \< h,\dbar^{\b}K(.,w)\si_j\>_{\HM}\\
&=& \del^{\b}h_j
\Eea
which completes the proof as $\del^{\a} h_j(w)=\<\textbf{h}(w),\e_{\a}\otimes\sigma_j\>_{\C^{|A|}\otimes\C^r}$.
\end{proof}

We now define an action of $\A(\O)$ on $J(\HM)$ so that $J(\HM)$ becomes a module over $\A(\O)$ and $J$ turns out to be a module isomorphism. For $f \in \A(\O)$ and $\textbf{h} \in J(\HM)$, the module action $J_f: J(\HM) \ra J(\HM)$ is defined by $J_f(\textbf{h}):=\mathcal{J}(f)\otimes I_r\textbf{h}$ where $\mathcal{J}(f) \in M_{|A|}(\C)$ is complex matrix defined as follows
\beq\label{modac}
\mathcal{J}(f)_{\a\b}:= {\a \choose \b}\del^{\a-\b}f := {\a_1 \choose \b_1}\cdots{\a_d \choose \b_d}\del^{\a-\b}f
\eeq 
with $\a=(\a_1,\hdots,\a_d)$ and $\b=(\b_1,\hdots,\b_d)$ and $I_r$ is the identity matrix of size $r$. Note that this is a lower triangular matrix and it takes the following matrix form
\[ \J(f)=
    \left(
    \begin{array}{ccccc}
        f                                    \\
      & \ddots           &   & \text{\huge0}\\
    \vdots  &    \mathcal{J}(f)_{\a\b}           & \ddots               \\
      &  &   &            \\
     \del_d^{k-1} f &  \hdots    &  \hdots &   & f
    \end{array}
    \right)
\]
Using the Leibniz rule, for $1 \leq i \leq r$, we have
\Bea
J(f\cdot h_i) &=& \sum_{\a \in A}\del^{\a}(f \cdot h_i)\otimes\e_{\a}\\
&=& \sum_{\a \in A}\sum_{\b_1=0}^{\a_1}\cdots\sum_{\b_d=0}^{\a_d}\left({\a_1 \choose \b_1}\cdots{\a_d \choose \b_d}\del_1^{\a_1-\b_1}\cdots\del_d^{\a_d-\b_d}f\cdot \del_1^{\b_1}\cdots\del_d^{\b_d}h_i\right)\otimes \e_{\a}\\
&=& \J(f)\cdot \textbf{h}_i\Eea 
which shows that $J(f\cdot h)=\mathcal{J}(f)\otimes I_r\cdot\textbf{h}=J_f(\textbf{h})$ implying that $J$ is a module isomorphism.

As in the case of Hilbert submodule $\HM_0$ of the Hilbert module $\HM$ it is clear that the subspace $$J(\HM)_0:=\{\textbf{h}\in J(\HM):\textbf{h}|_{\Z}=0\}$$ is a submodule of $J(\HM)$. Let $J(\HM)_q$ be the quotient module obtained by taking an orthogonal complement of $J(\HM)_0$ in $J(\HM)$, that is, $J(\HM)_q:=J(\HM)\ominus J(\HM)_0$. The following theorem provides the equivalence of two quotient modules $\HM_q$ and $J(\HM)_q$.

\begin{thm}\label{qejq}
${\HM}_q$ and $J(\HM)_q$ are isomorphic as modules over $\A(\O)$.
\end{thm}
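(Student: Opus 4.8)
The plan is to show that the unitary module isomorphism $J\colon\HM\ra J(\HM)$ of \eqref{jcon} carries the submodule $\HM_0$ \emph{exactly} onto $J(\HM)_0$; the claimed isomorphism $\HM_q\simeq_{\A(\O)}J(\HM)_q$ then drops out by restricting $J$ to orthogonal complements and checking that it intertwines the two quotient actions.

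First I would fix an admissible coordinate chart $(U,\phi)$ around a point $p\in\Z$ and, as justified in Remark~\ref{remres} and Proposition~\ref{eococh}, pass to this chart so that $\Z=\{z_1=\cdots=z_d=0\}$ locally and $\HM_0$ has the concrete description $\HM_0=\{h\in\HM:{\del_1}^{\a_1}\cdots{\del_d}^{\a_d}h_j|_{\Z}=0,\ 0\le|\a|\le k-1,\ 1\le j\le r\}$ obtained at the end of Section~\ref{SM}. Then for $h\in\HM$ I would compute $\textbf{h}|_{\Z}=\sum_{i=1}^r\big(\sum_{l=0}^N(\del^l h_i)|_{\Z}\otimes\e_l\big)\otimes\sigma_i$, and since $\{\del^l\}_{l=0}^N$ runs precisely over the normal multi-derivatives $\del_1^{\a_1}\cdots\del_d^{\a_d}$ with $0\le|\a|\le k-1$, this is $0$ if and only if $h\in\HM_0$. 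Hence $J(\HM_0)=\{\textbf{h}:h\in\HM_0\}=\{\textbf{h}\in J(\HM):\textbf{h}|_\Z=0\}=J(\HM)_0$. Both $\HM_0$ and $J(\HM)_0$ are closed, being intersections of kernels of the continuous point-evaluation functionals furnished by the reproducing kernels $K$ and $JK$, so this is an equality of closed submodules.

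With $J(\HM_0)=J(\HM)_0$ established, since $J$ is unitary it maps $\HM_0^\perp=\HM_q$ isometrically onto $J(\HM)_0^\perp=J(\HM)_q$; equivalently, writing $P$ and $\tilde P$ for the orthogonal projections of $\HM$ onto $\HM_q$ and of $J(\HM)$ onto $J(\HM)_q$, we obtain $JP=\tilde PJ$. It remains to verify that $J|_{\HM_q}$ intertwines the module actions. For $f\in\A(\O)$ and $h_q\in\HM_q$ (viewed inside $\HM$), the quotient action is $f\cdot h_q=P(fh_q)$, while on $J(\HM)_q$ it is $f\cdot\textbf{h}_q=\tilde P(\mathcal{J}(f)\textbf{h}_q)$. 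Using that $J$ is a module isomorphism from $\HM$ onto $J(\HM)$, namely $J(fh)=\mathcal{J}(f)\textbf{h}$, together with $JP=\tilde PJ$, I would compute $J(P(fh_q))=\tilde P(J(fh_q))=\tilde P(\mathcal{J}(f)J h_q)$, which is exactly $f\cdot(J h_q)$ in $J(\HM)_q$. This gives the asserted module isomorphism.

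The computation itself is short; the only genuinely substantive point — and the one I would be most careful about — is the identification $J(\HM_0)=J(\HM)_0$, which relies on the coordinate-independent characterization of the order of vanishing from Remark~\ref{coind} and Proposition~\ref{coch}: one must be sure that ``$\textbf{h}$ vanishes on $\Z$'' in the definition of $J(\HM)_0$ is being read with the same normal derivatives $\del^l$ that enter the construction of $\textbf{h}$, which is precisely what the passage to an admissible chart guarantees.
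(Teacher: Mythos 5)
Your proof is correct, and it takes a genuinely more direct route than the paper's. The paper never isolates the identity $J(\HM_0)=J(\HM)_0$; instead it argues on the orthocomplement side: it exhibits the spanning set $D=\{\dbar^{\a}K(\cdot,w)\sigma_i:\ w\in\Z,\ 1\le i\le r,\ \a\in A\}$ of $\HM_q$, shows that $J(D)=\{JK(\cdot,w)\e_j\otimes\sigma_i\}$ spans $J(\HM)_q$, and then verifies the intertwining in adjoint form, $JM_f^*=J_f^*J$ on $D$, by differentiating the eigenvector relation $M_f^*K(\cdot,w)\sigma_i=\ov{f(w)}K(\cdot,w)\sigma_i$ and matching the result with $JK(\cdot,w)\mathcal{J}(f)(w)^*(\e_{\theta(\a)}\otimes\sigma_i)$. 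You replace all of this with the observation that $\textbf{h}|_{\Z}=0$ if and only if $\del^{\a}h_i|_{\Z}=0$ for $0\le|\a|\le k-1$ and $1\le i\le r$, i.e.\ $J(\HM_0)=J(\HM)_0$ (resting on the derivative characterization of $\HM_0$ obtained from Proposition \ref{coch}), so that the unitary $J$ automatically satisfies $JP=\tilde PJ$, after which the module intertwining drops out formally from the Leibniz identity $J(fh)=\mathcal{J}(f)\textbf{h}$ that was already established when the $\A(\O)$-action on $J(\HM)$ was defined; the small well-definedness points you use ($f\HM_0\subset\HM_0$ and $\mathcal{J}(f)$ preserving vanishing on $\Z$, so that $P(fh_q)$ and $\tilde P(\mathcal{J}(f)\textbf{h}_q)$ really give the quotient actions) are available in the paper. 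What your route buys is brevity and the explicit structural fact $J(\HM_0)=J(\HM)_0$, which makes the intertwining of the whole short exact sequence transparent; what the paper's route buys is the concrete description of $\HM_q$ and $J(\HM)_q$ as closed spans of anti-holomorphic derivatives of kernel vectors together with the formula for $M_f^*$ on them, which prefigure the kernel computations used afterwards (e.g.\ in identifying $J(\HM)_q$ with $J(\HM)|_{\text{res}\Z}$). Your closing caveat about reading the vanishing condition in an admissible chart, with the same transverse derivatives $\del^l$ that build $\textbf{h}$, is exactly the normalization the paper makes tacitly, so there is no gap.
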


\begin{proof}
Let us begin by pointing out that $D:=\{\dbar^{\a}K(.,w)\sigma_i:w \in \Z,1\leq i \leq r,\a \in A\}$ is a spanning set for $\HM_q$. Indeed, using the reproducing property of $K$, we note for $\z \in \C^r$, $w \in \Z$, $\a \in A$, and $h \in {\HM}_0$, that
$$\<h,\dbar^{\a}K(.,w)\z\>=\<\del^{\a}h(w),\z\>=0$$implying that $\HM_0\subset\text{span}D^{\perp}$. Also, a similar computation yields that span$D\subset \HM_0^{\perp}$.

From the equation \eqref{jcon} it follows that $J(D)= \{JK(.,w)\varepsilon_{\a}\otimes\sigma_i: \a \in A, 1\leq i \leq r, w \in \Z\}.$ As above, for $\textbf{h} \in J(\HM)$, $1 \leq i \leq r$, and $w \in \O$, from the reproducing property of $JK$ it turns out that
\beq
\<\textbf{h},JK(.,w)\varepsilon_{\a}\otimes\sigma_i\>_{J(\HM)} = \<\textbf{h}(w),\varepsilon_{\a}\otimes\sigma_i\>_{\C^{|A|r}} = \del^{\a} h_i(w),~ \a \in A,
\eeq
justifying that $J(D)$ spans $J({\HM})_q$. Thus $J(\HM_q) = J(\text{span}D) = \text{span}J(D) = J(\HM)_q$.

In course of completion of the proof, it remains to check that $J$ is a module isomorphism from ${\HM}_q$ onto $J(\HM)_q$. In other words, we need to verify the following identity
$$J \circ P \circ M_f = (JP)\circ J_f\circ J,$$ for $f \in \A(\O)$, which is equivalent to show that $$J{M^*_f}P = {J^*_f}(JP)J$$ where $JP:J(\HM)\ra J(\HM)_q$ is the orthogonal projection operator. Since it amounts to show that $J$ intertwines the module actions on $D$ and both $P$ and $JP$ are identity on $D$ and $J(D)$, respectively, it is enough to prove that $$J{M^*_f} = {J^*_f}J,\,\,\,\text{for}\,\,f \in \A(\O), \,\,\,\text{on}\,\,D.$$


Let $\a=(\a_1,\cdots,\a_d) \in A $, $1 \leq i \leq r$ and $\dbar^{\a}K(.,w)\sigma_i \in D$. For $f \in \A(\O)$, $w \in \Z\subset\O$, we have \beq {M^*_f}K(.,w)\sigma_i=\ov{f(w)}K(.,w)\sigma_i.\eeq Differentiating both sides of the equation above and using the induction on the degree of differentiation we obtain
\beq {M_f}^*\dbar^{\a}K(.,w)\sigma_i &=& \sum_{\b_1=0}^{\a_1}\cdots\sum_{\b_d=0}^{\a_d}{\dbar_1}^{\a_1-\b_1}\cdots{\dbar_d}^{\a_d-\b_d}\ov{f(w)}{\dbar_1}^{\b_1}\cdots{\dbar_d}^{\b_d}K(.,w)\sigma_i.\eeq Therefore, the equation \eqref{jcon} together with the Proposition \ref{jet ker} yield that
$$J({M_f}^*\dbar^{\a}K(.,w)\sigma_i)= JK(.,w)(\mathcal{J}(f)(w))^*(\varepsilon_{\a}\otimes\sigma_i).$$
Finally, for $\textbf{h} \in J(\HM)$, $\z \in \C^{|A|}\otimes\C^r$ and $w \in \O$, we have
$$
\<\textbf{h},{J^*_f}JK(.,w)\cdot \z\>_{J(\HM)}
= \<\textbf{h}(w),\mathcal{J}(f)(w)^*\z\>_{\C^{|A| \times r}}
= \<\textbf{h},JK(.,w)\mathcal{J}(f)(w)^*\z\>_{J(\HM)}
$$
completing the proof.
\end{proof}

\begin{rem}
Note that as mentioned in \cite{OQMAM} the Theorem $\ref{qejq}$ is equivalent to the fact that the following diagram of exact sequences is commutative.
$$\begin{CD}
0 @>>> \HM_0 @>i>> \HM @>P>> \HM_q @>>> 0\\
  @.     @VVV @VVV @VVV\\
0 @>>> J(\HM)_0 @>i>> J(\HM) @>JP>> J(\HM)_q @>>> 0
\end{CD}$$
\end{rem}


In \cite{TRK}, it was shown that for a \rkhs $\H$ with scalar valued reproducing kernel $K$ on some set $W$, the restriction of $K$ on a subset $W_1$ of $W$ is also a reproducing kernel and restriction of $K$ to $W_1$ constitutes a \rkhs which is isomorphic to the quotient space $\H \ominus \H_0$ where $\H_0:=\{f \in \H:f|_{W_1}=0\}$. Here, adopting the proof from \cite{TRK} for our case with vector valued kernel, we have the following theorem.  Since this result is well known (Theorem 3.3, \cite{OQMAM}) for the case while the codimension of the submanifold, $\Z$, is one and using the techniques used in that proof in a similar way the following theorem can be obtained, we omit the proof.

\begin{thm}\label{qeres}
The normed linear space $J(\HM)|_{\Z}$ is a Hilbert space and the Hilbert spaces $J(\HM)_q$ and $J(\HM)|_{\Z}$ are unitarily equivalent. Consequently, the reproducing kernel $K_1$ for $J(\HM)|_{\Z}$ is the restriction of the kernel $JK$ to the submanifold $\Z$. Moreover, $J(\HM)_q$ and $J(\HM)|_{\Z}$ are isomorphic as modules over $\A(\O)$.
\end{thm}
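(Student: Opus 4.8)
The plan is to run the argument of \cite{TRK}, as adapted in \cite[Theorem 3.3]{OQMAM}, in the present vector-valued jet setting. First I would consider the restriction map $\mathcal{R}\colon J(\HM)\ra J(\HM)|_{\text{res}\Z}$, $\textbf{h}\mapsto\textbf{h}|_{\Z}$, which is linear and onto by the very definition of $J(\HM)|_{\text{res}\Z}$ and whose kernel is exactly the submodule $J(\HM)_0=\{\textbf{h}\in J(\HM):\textbf{h}|_{\Z}=0\}$. Since $J(\HM)_q=J(\HM)\ominus J(\HM)_0$ meets $J(\HM)_0$ only in $0$, the restriction $\mathcal{R}_0:=\mathcal{R}|_{J(\HM)_q}$ is a linear bijection onto $J(\HM)|_{\text{res}\Z}$: given $\textbf{g}=\textbf{h}|_{\Z}$, decomposing $\textbf{h}=\textbf{h}_q+\textbf{h}_0$ with $\textbf{h}_q\in J(\HM)_q$ and $\textbf{h}_0\in J(\HM)_0$ gives $\mathcal{R}_0(\textbf{h}_q)=\textbf{g}$. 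Pulling back the Hilbert space structure of $J(\HM)_q$ through $\mathcal{R}_0$ equips $J(\HM)|_{\text{res}\Z}$ with an inner product making $\mathcal{R}_0$ unitary; one then checks that the associated norm coincides with the norm with which $J(\HM)|_{\text{res}\Z}$ is equipped (equivalently, that $\|\textbf{h}_q\|$ is the least norm among all $J(\HM)$-preimages of $\textbf{g}$, which is the usual consequence of $\textbf{h}_q$ being the orthogonal projection of $\textbf{h}$ onto $J(\HM)_q$). This yields the first two assertions at once, completeness being inherited from $J(\HM)_q$.

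To identify the kernel, recall from the proof of Theorem \ref{qejq} that for $w\in\Z$ every vector $JK(\cdot,w)\varepsilon_j\otimes\sigma_i$ already lies in $J(\HM)_q$ and that $\<\textbf{h},JK(\cdot,w)\varepsilon_j\otimes\sigma_i\>_{J(\HM)}=\del^j h_i(w)$ for all $\textbf{h}\in J(\HM)$. Applying the unitary $\mathcal{R}_0$, and using that $\mathcal{R}$ annihilates $J(\HM)_0=\ker JP$ so that $\mathcal{R}(\textbf{h})=\mathcal{R}_0(JP\,\textbf{h})$ for arbitrary $\textbf{h}$, I would get $\<\textbf{h}|_{\Z},(JK(\cdot,w)\varepsilon_j\otimes\sigma_i)|_{\Z}\>_{J(\HM)|_{\text{res}\Z}}=\del^j h_i(w)$ for every $\textbf{h}\in J(\HM)$ and every $w\in\Z$, $0\le j\le N$, $1\le i\le r$. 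This is precisely the reproducing identity for the kernel $JK|_{\Z\times\Z}$ tested against the coordinate evaluations at points of $\Z$, so $K_1=JK|_{\Z\times\Z}$.

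For the module structure, I would observe that the matrix $\mathcal{J}(f)$ of \eqref{modac} involves only the derivatives $\del^{\a-\b}f$ in the variables $z_1,\dots,z_d$, so the $\A(\O)$-action $J_f$ preserves $J(\HM)_0$ and descends to $J(\HM)|_{\text{res}\Z}$ via $f\cdot(\textbf{h}|_{\Z}):=(\mathcal{J}(f)\textbf{h})|_{\Z}=\mathcal{J}(f)|_{\Z}\cdot(\textbf{h}|_{\Z})$, which is well defined precisely because of that invariance. Then for $\textbf{h}\in J(\HM)_q$ one has $\mathcal{R}_0(JP(J_f\textbf{h}))=\mathcal{R}(J_f\textbf{h})=f\cdot\mathcal{R}(\textbf{h})=f\cdot\mathcal{R}_0(\textbf{h})$, so $\mathcal{R}_0$ intertwines the quotient module action on $J(\HM)_q$ with the action just defined; hence it is a module isomorphism. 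The one point deserving genuine care — and the reason this is not entirely formal — is the norm-matching step in the first paragraph, showing that the transported inner product reproduces the given norm and hence that $J(\HM)|_{\text{res}\Z}$ is complete; this is where the argument of \cite{TRK} does the real work, and the vector-valued modification is routine. Everything else is bookkeeping with $JK$ and the Leibnitz rule, which is why the detailed proof is omitted.
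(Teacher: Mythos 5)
Your proposal is correct and follows exactly the route the paper itself indicates and omits: the Aronszajn restriction argument of \cite{TRK}, as in \cite[Theorem 3.3]{OQMAM}, adapted to the vector-valued jet setting — identifying $J(\HM)|_{\text{res}\Z}$ with $J(\HM)_q$ through the restriction map and the minimal-norm (orthogonal projection) property, reading off the kernel $K_1=JK|_{\Z\times\Z}$ from the reproducing identity, and checking that the $\J(f)$-action descends so that the unitary is a module map. No gaps; this is essentially the paper's intended proof.
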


\begin{thm}\label{qm}
The quotient module ${\HM}_q$ is equivalent to the module $J(\HM)|_{\Z}$ over $\A(\O)$.
\end{thm}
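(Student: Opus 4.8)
The plan is to chain together the equivalences already established in the preceding theorems. By Theorem \ref{qejq}, the quotient module $\HM_q$ is isomorphic as a module over $\A(\O)$ to $J(\HM)_q := J(\HM) \ominus J(\HM)_0$. By Theorem \ref{qeres}, the quotient module $J(\HM)_q$ is in turn isomorphic as a module over $\A(\O)$ to $J(\HM)|_{\text{res}\Z}$. Composing these two module isomorphisms yields the desired module isomorphism between $\HM_q$ and $J(\HM)|_{\text{res}\Z}$, which is exactly the statement of Theorem \ref{qm}.

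Concretely, I would write: \emph{By Theorem \ref{qejq} the map $J$ induces a module isomorphism $\HM_q \simeq_{\A(\O)} J(\HM)_q$, and by Theorem \ref{qeres} the restriction map induces a module isomorphism $J(\HM)_q \simeq_{\A(\O)} J(\HM)|_{\text{res}\Z}$. Since the composition of module isomorphisms is again a module isomorphism, we obtain $\HM_q \simeq_{\A(\O)} J(\HM)|_{\text{res}\Z}$.} One may optionally remark that the composite isomorphism sends a spanning vector $\dbar^{\a}K(\cdot,w)\sigma_i \in D$ (with $w \in \Z$) to the restriction to $\Z$ of $JK(\cdot,w)\varepsilon_{\theta(\a)}\otimes\sigma_i$, making the identification explicit, but this is not needed for the proof.

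There is essentially no obstacle here: Theorem \ref{qm} is a corollary obtained by transitivity of the relation $\simeq_{\A(\O)}$, and all the real work has been done in Theorems \ref{qejq} and \ref{qeres}. If anything, the only point worth a sentence of care is that both intermediate equivalences are genuinely equivalences \emph{of modules} (not merely of Hilbert spaces), which is precisely what both theorem statements assert — Theorem \ref{qejq} says ``isomorphic as modules over $\A(\O)$'' and the last sentence of Theorem \ref{qeres} says ``$J(\HM)_q$ and $J(\HM)|_{\text{res}\Z}$ are isomorphic as modules over $\A(\O)$'' — so the module structures are respected at every stage and the conclusion follows immediately.
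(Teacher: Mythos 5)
Your proof is correct and is exactly the paper's argument: the paper's proof of this theorem simply cites Theorems \ref{qejq} and \ref{qeres} and concludes by transitivity of module equivalence, just as you do. Your added remark about tracking the spanning vectors and checking that both intermediate equivalences are module (not merely Hilbert space) isomorphisms is a harmless elaboration, not a difference in approach.
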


\begin{proof}
It is obvious from Theorem $\ref{qejq}$ and Theorem $\ref{qeres}$.
\end{proof}

\begin{rem}\label{diagonal}
Let us consider a reproducing kernel Hilbert module $\HM$ over $\A(\D^m)$ and $\Delta_d=\{z=(z_1,\hdots,z_m)\in\D^m: z_1=\cdots=z_d\}$ be the connected submanifold. Suppose that $\HM_0$ is the submodule defined as follows
$$\HM_0=\{f\in\HM:\del_1^{\a_1}\cdots\del_{d-1}^{\a_{d-1}}f|_{\Delta_{d}}=0:\a=(\a_1,\hdots,\a_{d-1})\in(\N\cup\{0\})^{d-1},0\leq|\a|\leq k-1\}.$$
It follows from Proposition \ref{coch} that the submodule $\HM_0$ is independent of the choice of complementary directions to $\Delta_d$ (for example, $\HM_0$ is isomorphic with the submodule of functions $f$ in $\HM$ such that $\del_1^{\a_1}\cdots\del_{d-1}^{\a_{d-1}}f|_{\Delta_{d}}=0$ for $\a\in(\N\cup\{0\})^{d-1}$ with $0\leq |\a|\leq k-1$). Consider the quotient module $\HM_q=\HM\ominus\HM_0$ and note that as in the proof of Theorem \ref{qm} it can be shown that $\HM_q$ is isomorphic to the module of jets $J_1(\HM)|_{\Delta_d}$ restricted to $\Delta_d$ where $$J_1(\HM)=\left\{\sum_{0\leq|\a|\leq k-1}\del_1^{\a_1}\cdots\del_{d-1}^{\a_{d-1}}f\otimes\varepsilon_{\a}:\a=(\a_1,\hdots,\a_{d-1})\in(\N\cup\{0\})^{d-1}\right\}.$$ 

On the other hand, from Proposition \ref{eococh} and Theorem \ref{qm} we have that the quotient module $\HM_q$ is isomorphic to $J(\phi^*(\HM))|_{\Delta_d}$ as modules where $\phi:\D^m\ra\C^m$ is the bi-holomorphism $\phi(z)=(z_1-z_d,\hdots,z_{d-1}-z_d,z_d,\hdots,z_m)$ onto it's image, $\phi^*(\HM)=\{f\circ\phi^{-1}:f\in\HM\}$ and $J(f\circ\phi^{-1})=\sum_{0\leq|\a|\leq k-1}\del_{\l_1}^{\a_1}\cdots\del_{\l_{d-1}}^{\a_{d-1}}(f\circ\phi^{-1})\otimes\varepsilon_{\a}$ with $\l_j=\phi_j$, $j=1,\hdots,d-1$ as $(\D^m,\phi)$ is an admissible coordinate chart of $\D^m$ (Definition \ref{ad}). Therefore, $J_1(\HM)|_{\Delta_d}$ is also isomorphic to $J(\phi^*(\HM))|_{\Delta_d}$ as modules. Note that this new model $J_1(\HM)|_{\Delta_d}$ for the quotient modules $\HM_q$ of above kind is more canonical than the one obtained from the jet construction relative to a coordinate plane.

We also point out that the construction depicted above can be performed to any linear varieties which possess a global admissible coordinate chart.
\end{rem}

\textbf{Example.} For $\l > 0$, let $\H^{(\l)}(\D)$ be the Hilbert space of holomorphic functions on $\D$ with the reproducing kernel $K^{(\l)}(z,w)=(1-z \ov{w})^{-\l}$ for $z,w \in \D$ with $\{e^{(\l)}_n(z):=c_n^{-\frac{1}{2}}z^n:n \geq 0 \}$ as a complete orthonormal set in $\H^{(\l)}(\D)$ where $c_n$ are the $n$-th coefficient of the power series expansion of $(1-|z|^2)^{- \l}$, $$ c_n = {-{\l} \choose n} = \frac{\l(\l+1)(\l+2)\cdots (\l+n-1)}{n!}=\frac{(\l)_n}{n!}.$$ Let us recall that for $\l > 0$, the natural action of polynomial ring $\C[z]$ on each Hilbert space $\H^{(\l)}(\D)$ makes it into a Hilbert module over $\C[z]$. We also point out that, for $\l > 1$, $\H^{(\l)}(\D)$ becomes a Hilbert module over the disc algebra $\A(\D)$.

It is well known that the product of two reproducing kernels is also a reproducing kernel \cite[Page no. 8]{TRK}. For $\l=({\l}_1,\hdots,{\l}_m)$ with ${\l}_i> 0$, $i=1,\hdots,m$, let us consider the Hilbert space $\H^{(\l)}(\D^m):= \H^{({\l}_1)}(\D)\otimes \cdots \otimes \H^{({\l}_m)}(\D)$ with the natural choice of complete orthonormal set $\{e^{({\l}_1)}_{i_1}(z)\otimes \cdots \otimes e^{({\l}_m)}_{i_m}(z):i_j \geq 0, j=1,\hdots,m\}$. $\H^{(\l)}(\D^m)$ naturally possesses, under the identification of the functions $z_1^{i_1}\cdots z_m^{i_m}$ on $\D^m:=\D\times\cdots\times\D$, an obvious reproducing kernel $$K^{(\l)}(z,w):=\prod_{j=1}^m(1-z_j\ov{w}_j)^{-{\l}_j}$$ on $\D^m$. Furthermore, the natural action of $\C[\boldsymbol{z}]$ on $\H^{(\l)}(\D^m)$ makes it a Hilbert module over $\C[\boldsymbol{z}]$, for ${\l}_j>0$, $j=1,\hdots,m$, where by $\C[\boldsymbol{z}]$ we mean $\C[z_1,\hdots,z_m]$.


Let us now consider the subspace $\H^{(\l)}_0$ consisting of holomorphic functions in $\H^{(\l)}$ which vanish to order $2$ along the diagonal $\Delta:=\{(z_1,\hdots,z_m)\in \D^m:z_1=\cdots=z_m\}$, that is, following the definition given in Section $\ref{SM}$, $$\H^{(\l)}_0=\{f \in \H^{(\l)}(\D^m):f=\del_1 f= \cdots =\del_m f = 0\,\,\text{on}\,\,\Delta\}.$$

\noindent Following the remark above the quotient module $\H^{(\l)}_q:=\H^{(\l)}(\D^m)\ominus\H^{(\l)}_0$ can be identified with the reproducing kernel Hilbert module over $\A(\D)$ with the reproducing kernel $$K_q(\bz,\bw)|_{\Delta}=JK^{(\l)}(\bz,\bw)|_{\Delta}=((\del_i\dbar_jK^{(\l)}(\bz,\bw)|_{\Delta}))_{i,j=0}^{m}.$$

\begin{rem}\label{rem}
Let us now clarify the module action of $\A(\O)$ on the quotient module ${\HM}_q$ before proceeding further. To facilitate this action we, following \cite[page 384]{OQMAM}, consider the algebra of holomorphic functions on $\O$ taking values in $\C^{|A|}$ with $A=\{\a \in (\N \cup \{0\})^d:|\a|<k\}$,
$$J\A(\O):=\{\J f:f \in \A(\O)\} \subset \A(\O) \otimes M_{|A|}(\C)$$ with the usual matrix multiplication, namely, $(\J f\cdot\J g)(z):=\J f(z)\J g(z)$. It is clear from $\eqref{modac}$ that $J(\HM)|_{\Z}$ is a module over the algebra $J\A(\O)|_{\Z}$ obtained by restricting $J\A(\O)$ to $\Z$. Note that $J$ defines an algebra isomorphism from $\A(\O)$ onto $J\A(\O)$ and intertwines the restriction operators $R_1:\A(\O)\ra\A(\O)|_{\Z}$ and $R_2:J\A(\O)\ra J\A(\O)|_{\Z}$. Consequently, $J:\A(\O)|_{\Z}\ra J\A(\O)|_{\Z}$ is also an algebra isomorphism. So $J(\HM)|_{\Z}$ can be thought of as a Hilbert module over $J\A(\O)|_{\Z}$.


On the other hand, observe that the inclusion $i:\Z\ra \O$ induces a map $i^*:J\A(\O)\ra J\A(\O)|_{\Z}$ defined by $i^*(\J f)(z)=\J f(i(z))$, for $z \in \Z$. Then $J(\HM)|_{\Z}$ can be made a module over $J\A(\O)$ by defining the module map as follows:
$$\J f\otimes I_r\cdot\textbf{h}|_{\Z}:=i^*(\J f\otimes I_r)\textbf{h}|_{\Z}.$$ Thus, recalling the fact that $J$ defines an algebra isomorphism between $\A(\O)$ and $J\A(\O)$, we can think of $J(\HM)_q$ as a module over $\A(\O)$. Moreover, from the equation above it can be seen, for $f=z_i$ with $i=d+1,\hdots,m$, that the module action $J_f=M_{\J f\otimes I_r}$ on $J(\HM)|_{\Z}$ becomes the multiplication by $z_i$. Thus, $J(\HM)|_{\Z}$ can be thought of as a \rkhs with the reproducing kernel $JK|_{\Z}$ on which the multiplication operators corresponding to the coordinate functions of $\Z$ are obtained from the module actions $J_{z_i}$, $i=d+1,\hdots,m$. 
\end{rem}

Since the similar construction can be done for the Hilbert modules $\HM \in \mathrm B_r(\O)$ with submodules $\HM_0$ consisting of holomorphic functions in $\HM$ vanishing along $\Z$ to order $k$, it is natural to ask whether the quotient spaces arising from such submodules are in $\mathrm B_{|A|r}(\Z)$ where $|A|$ is the cardinality of the set $A$ of all multi-indices $\a=(\a_1,\hdots,\a_d)$ with $|\a|<k$. In the following theorem, we give an affirmative answer to this question for a large collection of Hilbert modules.

\begin{thm}\label{qmodinCDcls}
Let $\O \subset \C^m$ be a bounded domain containing the origin and $\Z\subset \O$ be the coordinate plane defined by $\Z =\{z=(z_1,\hdots,z_m) \in \O:z_1=\cdots=z_d=0\}$. We also assume that $\HM$ is a \rkhs with the reproducing kernel $K$ such that $\C[z_1,\hdots,z_m]\subset\HM$. Then the quotient Hilbert space $\HM_q$ lies in $\mathrm B_{|A|r}(\Z)$ provided $\HM \in \mathrm B_r(\O)$ where $|A|$ is the cardinality of $A=\{\a \in (\N \cup \{0\})^d:|\a|<k\}$.
\end{thm}

\begin{proof}
Let us denote $z'=(z_1,\hdots,z_d)$ and $z''=(z_{d+1},\hdots,z_m)$. In view of Theorem $\ref{qm}$ and Remark $\ref{rem}$, it is enough to prove that the module of jets, $J(\HM)$ restricted to $\Z$ is in $B_{|A|r}(\Z)$. Since $\C[z_1,\hdots,z_m]\subset\HM$ and $J(\HM)$ has a reproducing kernel, the evaluation functionals are bounded and onto. So from Theorem \ref{gleason} it is enough to show that, for any $\textbf{h}\in J(\HM)|_{\Z}$ and $(0,w'')\in\Z$, $\textbf{h}(0,w'')=0$ if and only if $\textbf{h}=(z_{d+1}-w_{d+1})\textbf{h}_1+\cdots+(z_m-w_m)\textbf{h}_{m-d}$ on $\Z$ for some $\textbf{h}_1,\hdots,\textbf{h}_{m-d}\in J(\HM)|_{\Z}$ which follows from the lemma below.
\end{proof}

\begin{lem} Let $\textbf{h}\in J(\HM)|_{\Z}$ with $\textbf{h}=J(h)$ and $(0,w'')\in\Z$ be an arbitrary point. Assume also that $\textbf{h}(0,w'')=0$. Then there exist $h_1,\hdots,h_{m-d}\in\HM$ such that
\beq\label{3.3.10eqn1}
h=\sum_{j=1}^{m-d}(z_{d+j}-w_{d+j})h_j(z',z'')+\sum_{|\a|=k}z^{\a}g_{\a}
\eeq
for some $g_{\a}\in \HM$, $\a=(\a_1,\hdots,\a_d)$ with $|\a|=k$.
\end{lem}

\begin{proof}
We prove the desired identity in \eqref{3.3.10eqn1} with the help of mathematical induction on $|\a|$. Since $\textbf{h}(0,w'')=0$ it follows from the Gleason property of $\HM$ (as $\HM\in B_r(\O)$) that $$h(z',z'')=\sum_{i=1}^{d}z_ig_i(z',z'')+\sum_{j=1}^{m-d}(z_{d+j}-w_{d+j})g_{d+j}(z',z''))$$ for some $g_1,\hdots,g_m\in\HM$. So it takes care of the base case with $h_j=g_{d+j}$, $j=1,\hdots,m-d$.

Assume that the claim is true for $|\a|\leq k-2$. Let $\textbf{h}\in J(\HM)$ with $\textbf{h}=J(h)$ and $\textbf{h}(0,w'')=0$. From the definition of $J$ in \eqref{jcon}, it follows that $\del^{\a}h(0,w'')=0$ for all $\a\in A$. Consequently, from the induction hypothesis it turns out that 
\beq\label{3.3.10eqn2} 
h(z',z'') = \sum_{j=1}^{m-d}(z_{d+j}-w_{d+j})g_j(z',z'')+\sum_{|\a|=k-1}z^{\a}g_{\a}
\eeq
for some $g_{\a}\in \HM$, $\a=(a_1,\hdots,a_d)$ with $|\a|=k-1$. For any $\a\in A$ with $|\a|=k-1$, applying the differential operator $\del^{\a}$ to the both sides of the equation \eqref{3.3.10eqn2} we have that $$\del^{\a}h(0,z'') = \sum_{j=1}^{m-d}(z_{d+j}-w_{d+j})\del^{\a}g_j(0,z'')+cg_{\a}(0,z'')$$ for some non-zero constant $c$. But since $\del^{\a}h(0,w'')=0$ so is $g_{\a}(0,w'')=0$. Therefore, from the Gleason property of $\HM$ it can be seen that $$g_{\a}(z',z'')=\sum_{i=1}^dz_ig_{\a}^i(z',z'')+\sum_{j=1}^{m-d}(z_{d+j}-w_{d+j})\tilde{g}_{j\a}(z',z'')$$ for some $g_{\a}^i,\tilde{g}_{j\a}\in\HM$ for $i=1,\hdots,d$ and $j=1,\hdots,m-d$. Now substituting $g_{\a}$'s in \eqref{3.3.10eqn2} we have that $$h(z',z'')=\sum_{j=1}^{m-d}(z_{d+j}-w_{d+j})\left(g_j+\sum_{|\a|= k-1}(z')^{\a}\tilde{g}_{j\a}\right)(z',z'')+\sum_{|\a|=k-1,i=1}^dz^{\a+\varepsilon_i}g^i_{\a}$$ which completes the proof.
\end{proof}

\begin{rem}
Note that the assumption $\C[z_1,\hdots,z_m]\subset\HM$ implies that the reproducing kernel $JK$ for the Hilbert module $J(\HM)$ is non-degenerate (cf. \cite[Definition 4.2]{GBKCD}).
\end{rem}

We note that the above theorem provides examples of quotient modules which are in the Cowen-Douglas class. 
We now provide another important application of the lemma above which will be useful in the next section.

\begin{prop}\label{generalized eigenspace}
Let $\O \subset \C^m$ be a bounded domain containing the origin and $\Z\subset \O$ be the coordinate plane defined by $\Z =\{z=(z_1,\hdots,z_m) \in \O:z_1=\cdots=z_d=0\}$. We also assume that $\HM$ is a \rkhs with the reproducing kernel $K$ such that $\C[z_1,\hdots,z_m]\subset\HM$ and $\HM\in\mathrm B_r(\O)$. Then $\{\dbar^{\a}K(.,(0,w''))\varepsilon_j:0\leq |\a|\leq k-1,~(0,w'')\in \Z,~1\leq j\leq r\}$ forms a basis for $\cap_{i=d+1}^m\ker(M^*_{z_i}-\ov{w}_i)\cap\ker(M^*_{z''})^{\b}$ where $\ker(M^*_{z''})^{\b}$ is $\cap_{|\b|=k}\ker((M^*_{z_1})^{\b_1}\cdots(M^*_{z_d})^{\b_d})$ and $\b=(\b_1,\hdots,\b_d)$.
\end{prop}

\begin{proof}
We first observe from the reproducing property that, for any $w\in\O$ and $1\leq i\leq r$, $$M^*_{z_i}(K(.,w)\varepsilon_j)=\ov{w}_iK(.,w)\varepsilon_j.$$ So it follows, for any $w\in\O$ and $\a_i,\b_i\in\N\cup\{0\}$, that $$(M^*_{z_i}-\ov{w}_i)^{\b_i}\dbar_i^{\a_i}K(.,w)\varepsilon_j=\b_i!{\a_i\choose\b_i}\dbar_i^{\a_i-\b_i}K(.,w)\varepsilon_j.$$ Consequently, we have, for $w\in\O$, $1\leq j\leq r$ and $\a=(\a_1,\hdots,\a_d)\in(\N\cup\{0\})^d$, that $$(M^*_{z_1}-\ov{w}_1)^{\b_1}\cdots(M^*_{z_d}-\ov{w}_d)^{\b_d}(\dbar^{\a}K(.,w)\varepsilon_j)=\b!{\a\choose\b}\dbar^{\a-\b}K(.,w)\varepsilon_j$$ implying that the set $\{\dbar^{\a}K(.,(0,w''))\varepsilon_j:0\leq |\a|\leq k-1,~(0,w'')\in \Z,~1\leq j\leq r\}$ is contained in $\cap_{i=d+1}^m\ker(M^*_{z_i}-\ov{w}_i)\cap\ker(M^*_{z''})^{\b}$.

Let $f$ be any element perpendicular to the set $\{\dbar^{\a}K(.,(0,w''))\varepsilon_j:0\leq |\a|\leq k-1,~(0,w'')\in \Z,~1\leq j\leq r\}$. It follows from the reproducing property that $\del^{\a}f(0,w'')=0$ for all $\a$ with $0\leq|\a|\leq k-1$. From the lemma above we have that $f$ lies in $\oplus_{i=d+1}^m(M_{z_i}-w_i)\HM\oplus_{|\b|=k}(M_{z_1})^{\b_1}\cdots(M_{z_d})^{\b_d}\HM$ which is perpendicular to $\cap_{i=d+1}^m\ker(M^*_{z_i}-\ov{w}_i)\cap\ker(M^*_{z''})^{\b}$. Therefore, $f=0$.
\end{proof}

\section{Jet Bundle}\label{JB}

This section is devoted to provide geometric invariants of quotient modules $\HM_q$ introduced in the previous section. To begin with, since the Hilbert module $\HM$ in $\mathrm B_r(\O)$, $\HM$ gives rise to a {\h} $E$ with the frame $\{K(.,\ov{w})\sigma_1,\hdots,K(.,\ov{w})\sigma_r:w \in \O^* \}$ on $\O^*$. Now to make calculations simpler let us consider the map $\mathit{c}:\O \ra \O^*$ defined by $w\mapsto \ov{w}$ and pull back the bundle $E$ to a vector bundle over $\O$. We denote this new bundle with the same letter $E$ and note that $E$ is a {\h} over $\O$ with the global holomorphic frame $\f:=\{s_1(w),\hdots,s_r(w):w \in \O \}$ with $s_j(w):=K(.,\ov{w})\sigma_j$, $1 \leq j \leq r$. Correspondingly, we have $\del^{\a}s_j(w)=\del^{\a}K(.,\ov{w})\sigma_j$, $1 \leq j \leq r$, $\a \in A$ with $A=\{\a \in (\N \cup \{0\})^d:|\a|<k\}$.


Following the ideas in \cite{OQMAM} we attempt, in this section, to describe the jet bundle of the vector bundle $E\ra\O$ relative to a connected complex submanifold $\Z$ of codimension $d\geq 1$. Our interest is to investigate unitary invariants of the quotient module $\HM_q$ obtained from the submodules $\HM_0$ (cf. Section \ref{SM}) assuming that $\HM \in \mathrm B_{r}(\O)$. Therefore, following the Remark \ref{submodasmp}, it is enough to consider the submanifolds 
$$\Z := \{z=(z_1,\cdots,z_m) \in \O : z_1=\cdots=z_d=0\}.$$
We define the jet bundle $J^{(k)}E$ of order $k$ of $E$ relative to $\Z$ on $\O$ by declaring $\{\del^{\a} \f\}_{\a \in A}$ as a frame for $J^{(k)}E$ on $\O$ where we mean by $\del^{\a}\f$ the ordered set of sections $\{\del^{\a}s_1,\hdots,\del^{\a}s_r\}$, $\a \in A$. Since we have a global frame on $J^{(k)}E$ we do not need to worry about the transition rule.

At this point, we should note that this construction of jet bundle depends on the choice of the complementary direction to $\Z$ which is, a priori, not unique. For instance, two different choices of admissible coordinate charts (Definition \ref{ad}) $(U_1,\phi_1)$ and $(U_2,\phi_2)$ would give rise to two jet bundles $J^{(k)}_iE$, $i=1,2$ with global frames $\{\del^{\a}_{\l_i}s_j:\a\in A,1\leq j\leq r\}$ for $i=1,2$, respectively. A priori it is not clear if these two bundles are equivalent. Nevertheless, it can be seen from the following proposition.

\begin{prop}
Let $(U_1,\phi_1)$ and $(U_2,\phi_2)$ be two admissible coordinate charts of $\O$ around some point $p \in \Z$. Then two jet bundles $J^{(k)}_1E$ and $J^{(k)}_2E$ obtained as above \w $(U_1,\phi_1)$ and $(U_2,\phi_2)$, respectively, are equivalent holomorphic vector bundles over $U_1\cap U_2$.

\end{prop}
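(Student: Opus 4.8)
The plan is to show that the two jet bundles $J^k_1E$ and $J^k_2E$, each carrying the global holomorphic frame obtained from the corresponding admissible chart, are related by a holomorphic change of frame over $U_1\cap U_2$, and that this change of frame is precisely realized by the transition matrix $A_{k,\phi}$ that already appeared in the proof of Proposition \ref{coch}. More concretely, write $\f = \{s_1,\dots,s_r\}$ for the common global holomorphic frame of $E$ coming from the reproducing kernel, and let $\lambda = \phi_1(z)$, $\mu = \phi_2(z)$ be the two admissible coordinate systems. The frame of $J^k_iE$ consists of the sections $\del^l_{\lambda}\f$ (respectively $\del^l_{\mu}\f$) for $0\le l\le N$, where $\del^l_{\lambda}$ is shorthand for $\frac{\del^{|\a|}}{\del\lambda_1^{\a_1}\cdots\del\lambda_d^{\a_d}}$ with $\theta^{-1}(l)=\a$. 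The claim is that on $U_1\cap U_2$ the ordered tuple $(\f,\del^1_{\mu}\f,\dots,\del^N_{\mu}\f)$ is obtained from $(\f,\del^1_{\lambda}\f,\dots,\del^N_{\lambda}\f)$ by left-multiplication by a block lower triangular holomorphic matrix-valued function whose diagonal blocks $1,\mathcal D_1,\dots,\mathcal D_{k-1}$ are invertible everywhere on the overlap, so it defines a holomorphic vector bundle isomorphism $J^k_1E \to J^k_2E$.

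The key computation is the chain-rule identity that expresses the $\mu$-derivatives of a section in terms of the $\lambda$-derivatives, which is exactly the content of equation \eqref{cvf} in the proof of Proposition \ref{coch}, applied not to a scalar holomorphic function $f$ but componentwise to each $s_j$ (these are holomorphic $\C^r$-valued sections in the fixed trivialization given by $\f$, so the same Leibniz/chain-rule induction goes through verbatim). That proposition already proved that the transition matrix $A_{k,\phi_2\circ\phi_1^{-1}}(z)$ — equivalently the composite of $A_{k,\phi_1}$ and $A_{k,\phi_2}^{-1}$ — is block lower triangular with diagonal blocks the Jacobian-induced isomorphisms $\mathcal D_t$, and that it is invertible at every point of $U_1\cap U_2$ because each $\mathcal D_t$ is. Since each entry of this matrix is a polynomial in partial derivatives of the (holomorphic) transition map $\phi_2\circ\phi_1^{-1}$, the matrix is holomorphic on the overlap. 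Thus the map sending the first frame to the second is a holomorphic, pointwise-invertible $\mathrm{GL}_{(N+1)r}$-valued (in block form, $\mathrm{GL}$ acting blockwise on the $\C^r$ factors) transformation, which is precisely a holomorphic bundle isomorphism $J^k_1E \cong J^k_2E$ over $U_1\cap U_2$.

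I would organize the proof as follows. First, recall the definition of $J^k_iE$ and fix the two frames explicitly. Second, observe that the change-of-variables identity \eqref{cvf} from the proof of Proposition \ref{coch}, being purely an application of the Leibniz rule to holomorphic functions, applies equally to each component of the sections $s_j$; assembling over $j=1,\dots,r$ this gives a relation of the form $(\text{second frame}) = B(z)\cdot(\text{first frame})$ with $B(z)$ block lower triangular, holomorphic, and pointwise invertible on $U_1\cap U_2$. Third, conclude that $B$ is the transition cocycle realizing a holomorphic isomorphism of vector bundles. A subtlety worth flagging is the bookkeeping: the two admissible charts only share the property that the first $d$ coordinates cut out $\Z$, so the transition map has the block form $(\lambda_1,\dots,\lambda_m)\mapsto(\psi_1(\lambda),\dots,\psi_d(\lambda),\lambda_{d+1},\dots,\lambda_m)$ with $\psi_1,\dots,\psi_d$ vanishing on $\{\lambda_1=\cdots=\lambda_d=0\}$; this block structure is exactly what forces $B$ to be block lower triangular with the $\mathcal D_t$'s on the diagonal, and it is what was already exploited in Proposition \ref{coch}.

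The main obstacle, and really the only non-bookkeeping point, is verifying that the induced change of frame is genuinely holomorphic and invertible \emph{as a matrix over the whole overlap} rather than just at the base point $p$ — i.e.\ upgrading the pointwise statement ``$\mathcal D_t(p)$ is invertible'' used in Proposition \ref{coch} to ``$\mathcal D_t(z)$ is invertible for all $z\in U_1\cap U_2$.'' This is handled exactly as in the discussion preceding Definition \ref{ad}: after possibly shrinking $U_1\cap U_2$, the relevant Jacobian-type determinant is nonvanishing (it is nonzero at $p$ and continuous), and since we have already arranged in the localization step that all our modules may be restricted to arbitrarily small neighbourhoods of a point of $\Z$ without changing their unitary equivalence class, no generality is lost. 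Once this is in place the rest is a formal assembly, and the proposition follows.
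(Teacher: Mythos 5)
Your proposal is correct and follows essentially the same route as the paper: apply the change-of-variables identity \eqref{cvf} of Proposition \ref{coch} componentwise to the frame sections, and observe that the resulting block lower triangular, holomorphic, pointwise invertible matrix (the composite $A_{k,\phi_1}\cdot A_{k,\phi_2}^{-1}$, tensored with $I_r$) realizes the desired bundle isomorphism on a neighbourhood of $p$ in $U_1\cap U_2$. The paper handles the invertibility point the same way you do, by working on a small enough neighbourhood where the diagonal blocks $\mathcal{D}_t$ are invertible.
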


\begin{proof}
In fact, from Proposition $\ref{coch}$ it is clear, for a holomorphic frame $\f=\{s_1,\hdots,s_r\}$ of $E$ on $U_1\cap U_2$, that on a small enough {\nbhd} $U$ of $p$ in $U_1\cap U_2$ we have, for $i=1,2$,
$$A_{k-1,\phi_i}(z)\cdot
\begin{pmatrix}
s^i_1(\l) & \cdots & s^i_r(\l)\\
\del_{\l_1}s^i_1(\l) & \cdots & \del_{\l_1}s^i_r(\l)\\
\vdots &  & \vdots\\
\del^{\a}_{\l}s^i_1(\l) & \cdots & \del^{\a}_{\l}s^i_r(\l)\\
\vdots &  & \vdots\\
\del_{\l_d}^{k-1}s^i_1(\l) & \cdots & \del_{\l_d}^{k-1}s^i_r(\l)
\end{pmatrix}=
\begin{pmatrix}
s_1(z) & \cdots & s_r(z)\\
\del_1 s_1(z) & \cdots & \del_1 s_r(z)\\
\vdots &  & \vdots\\
\del^{\a} s_1(z) & \cdots & \del^{\a} s_r(z)\\
\vdots &  & \vdots\\
\del_d^{k-1}s_1(z) & \cdots & \del_d^{k-1}s_r(z)
\end{pmatrix},$$ for $z\in U$ and $\l_i \in \phi_i(U)$, where $\l_i=(\l_{i1},\hdots,\l_{id})$, $\a=(\a_1,\cdots,\a_d)$, and $s^i_j=s_j\circ\phi_i^{-1}$, $1 \leq j \leq r$. Since $A_{k-1,\phi_i}(z)$, for $i=1,2$ and $z\in U$, are invertible (Proposition $\ref{coch}$) we can see that $(A_{k-1,\phi_1}(z)\circ A_{k-1,\phi_2}(z)^{-1})\otimes I_r$ is the desired bundle map where $I_r$ is the identity matrix of order $r$.
\end{proof}


Now in course of completing our construction to make the jet bundle $J^{(k)}E$ a hermitian holomorphic vector bundle, we need to put a hermitian metric on $J^{(k)}E$ extending the metric on $E$. To this extent, if $H(w)=((\langle s_i(w),s_j(w)\rangle_{E}))_{i,j=1}^r$ is the metric on $E$ over $\O$ then the hermitian metric on $J^{(k)}E$ \w the frame $\{\del^{\a} \f\}_{\a \in A}$ is given by the Grammian $JH:=((JH_{\a\b}))_{\a,\b\in A}$ with $r\times r$ blocks $$JH_{\a\b}(w):=((\langle\del^{\a} s_i(w), \del^{\b} s_j(w)\rangle))_{i,j=1}^r\,\,\,\text{for}\,\,\a,\b \in A, w \in \O.$$
This completes our construction of the jet bundle.

\begin{rem}\label{rjbi}
(i) Assume that $\C[z_1,\hdots,z_m]\subset\HM$. Then it follows from Theorem \ref{qmodinCDcls} that the quotient module $\HM_q\in\mathrm B_{r|A|}(\Z)$. Also, we note that the {\h} $\mathscr{E}|_{\Z}\ra\Z$ obtained from $J(\HM)|_{\Z}$ is equivalent to the jet bundle $J^{(k)}E|_{\Z}\ra \Z$ of $E$ relative to $\Z$. To facilitate, $K=((K_{ij}))_{i,j=1}^r$ be the reproducing kernel associated to $\HM$. From the preceding construction it follows that the metric for the jet bundle is given by the formula
$$\langle\del^{\a} K(.,\ov{w})\sigma_i,\del^{\b} K(.,\ov{w})\sigma_j\rangle=\del^{\a}\dbar^{\b} K_{ji}(\ov{w},\ov{w})\,\,\,\text{for}\,\,w\in\Z,~\a,\b \in A,~1 \leq i,j \leq r.$$
On the other hand, the bundle $\mathscr{E}|_{\Z}\ra\Z$ associated to $J(\HM)|_{\Z}$ is spanned by the global holomorphic frame $\{JK(.,\ov{w})\varepsilon_{\a}\otimes\varepsilon_j:\a\in A,~1\leq j\leq r\}$. So we define the bundle map $\Phi:\mathscr{E}|_{\Z}\ra J^{(k)}E|_{\Z}$ by $JK(.,\ov{w})\varepsilon_{\a}\otimes\varepsilon_j\mapsto \del^{\a} K(.,\ov{w})\sigma_j$, for $w\in \Z,\a\in A$ and $1\leq j\leq r.$ Since $J$ is the unitary module map $J:\HM \ra J(\HM)$ it then follows that $\Phi$ is an isometry. Therefore, the vector bundle $\mathscr{E}|_{\Z}$ is unitarily equivalent to $J^{(k)}E|_{\Z}$.

(ii) We point out that, in our notation of jet bundle, $J^{(1)}E$ is nothing else but the bundle $E$ itself, although in literature $J^{(1)}E$ means the first jet bundle of the vector bundle $E$. We maintain these notations following \cite{OQMAM,EQHMII}.

(iii) Note that the action of the algebra $\A(\O)$ on the module $J(\HM)$ defines, for every $f \in \A(\O)$, a holomorphic bundle map $\Psi_f:J^{(k)}E\ra J^{(k)}E$ whose matrix representation \w the frame $J(\f):=\{\sum_{\a \in A}\del^{\a} s^1\otimes \e_{\a},\hdots,\sum_{\a \in A}\del^{\a} s_r\otimes \e_{\a}\}$ is the matrix $\J(f)\otimes I_r$ where $\J(f)$ is as in $\eqref{modac}$ and $I_r$ is the identity matrix of order $r$. Thus, $\Psi_f$ induces an action of $\A(\O)$ on the holomorphic sections of the jet bundle $J^{(k)}E$ defined by \beq\label{modacsec}(f\cdot \sigma)(w) := \Psi_f(\sigma(w)),\eeq for $f \in \A(\O)$, $w \in \O$ and $\sigma$ is a holomorphic section of $J^{(k)}E$. Therefore, we observe that the question of determining the equivalence classes of modules $J(\HM)$ is same as understanding the equivalence classes of the jet bundles $J^{(k)}E$ with an additional assumption that the equivalence bundle map is also a module map on holomorphic sections over $\A(\O)$. Hence it is natural to give the following definition (Definition 4.2, \cite{EQHMII}).
\end{rem}

\begin{defn}\label{jbi}
Two jet bundles are said to be equivalent if there is an isometric holomorphic bundle map which induces a module isomorphism of the class of holomorphic sections.
\end{defn}

\subsection{Main results from Jet bundle}

In order to find geometric invariants of quotient modules we first investigate the simple case, $d=k=2$. We show here that the curvature is the complete set of unitary invariants of the quotient module $\HM_q$ for the Hilbert module $\HM$ when $r=1$. In this case, we give a computational proof to depict the actual picture behind the general result which we will prove later in this subsection. Although the line of idea of the proof for $k=2$ essentially is the same as in \cite{EQHMII}, in our case calculations become more complicated as here we have to deal with more than one transversal directions to $\Z$. Thus, our results extend most of the results of the paper \cite{OQMAM}, \cite{EQHMII} as well as those from a recent paper \cite{ALOTCD}.


Since $d=2$ we have that $\Z = \{(z_1,\hdots,z_m)\in \O:z_1=z_2=0\}$. Consequently, $(0,0,z_3,\hdots,z_m)$ is the coordinates of $\Z$ in $U$. Now let us begin with a line bundle $E$ over $\O^*$ with the real analytic metric $G$ which possesses the following power series expansion
\beq G(z',z'')=\sum_{\a,\b=0}^{\infty}G_{\a\b}(z''){z'}^{\a}\ov{z'}^{\b}\eeq where $(z',z'')\in \O^*$, $\a,\b$ are multi-indices, ${z'}^{\a}={z_1}^{\a_1}{z_2}^{\a_2}$, $\ov{z'}^{\b}={\ov{z}_1}^{\b_1}{\ov{z}_2}^{\b_2}$ and $z''=(z_3,\hdots,z_m)$.


\begin{lem}\label{trial}
Let $\O \subset \C^m$ be a bounded domain and $\Z$ be a complex connected submanifold of $\O$ of codimension $2$. Suppose that $\K$ and $\tilde{\K}$ are the curvature tensors of line bundles $E$ and $\tilde{E}$ \w the hermitian metric $\rho$ and $\tilde{\rho}$ of $E$ and $\tilde{E}$, respectively. Then $\K$ and $\tilde{\K}$ are equal on $\Z$ if and only if there exist holomorphic functions $\s_{00},\s_{10},\s_{01}$ on $\Z$ such that
\beq \label{em}((\tilde{\rho}_{\a\b}))_{|\a|,|\b|=0}^1=\Psi\cdot(({\rho}_{\a\b}))_{|\a|,|\b|=0}^1\cdot{\Psi}^*\eeq on $\Z$ where $\rho_{\a\b}~ (\tilde{\rho}_{\a\b},~ \text{respectively})=\del^{\a}\dbar^{\b}\rho~(\del^{\a}\dbar^{\b}\tilde{\rho},~\text{respectively})$ with $\a,\b\in(\N\cup\{0\})^2$ and $\Psi$ is the $3 \times 3$ matrix
\beq\label{msi}
\Psi = \begin{pmatrix}
\s_{00} & 0 & 0\\
\s_{10} & \s_{00} & 0\\
\s_{01} & 0 & \s_{00}\\
\end{pmatrix}.
\eeq
\end{lem}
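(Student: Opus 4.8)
The plan is to exploit the jet-bundle framework set up above, specifically the explicit relation between the curvature of a line bundle and the Grammian of its jet bundle given in Lemma \ref{cume}. First I would observe that the metric $\rho$ of $E$ restricted to the coordinates transversal to $\Z$ produces, via the jet construction with $k=d=2$, the $3\times 3$ Grammian $JH=((\rho_{\theta^{-1}(i)\theta^{-1}(j)}))_{i,j=0}^2$ whose entries are precisely $\del^l\dbar^t\rho$ for $0\le l,t\le 2$, where the index $l$ corresponds via $\theta^{-1}$ to a multi-index $\a=(\a_1,\a_2)$ with $|\a|\le 1$ and $\del^l=\del_1^{\a_1}\del_2^{\a_2}$. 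Thus the matrix $\Psi$ in \eqref{msi} should be thought of as $\J(\si)|_{\Z}$ for a holomorphic $\si$ on a neighbourhood of a point of $\Z$, where $\si|_{\Z}=\si_{00}$, $\del_1\si|_{\Z}=\si_{10}$, $\del_2\si|_{\Z}=\si_{01}$: the lower-triangular shape of $\Psi$ matches exactly the form of $\J(\si)$ displayed after \eqref{modac}, once one restricts to $\Z$ and notes that the off-diagonal entries $\del^{\a-\b}\si$ vanish whenever $|\a-\b|\ge 2$ (which cannot happen here since all indices have $|\cdot|\le 1$).

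For the ``only if'' direction I would argue as follows. Suppose $\K=\tilde\K$ on $\Z$. By part (ii) of Lemma \ref{cume} applied in the two transversal variables $z_1,z_2$, all the entries $\del_1^{p_1}\del_2^{p_2}\dbar_1^{q_1}\dbar_2^{q_2}H$ with $p_l,q_l\le 1$ — that is, all entries of $JH$ — are expressible in terms of $H^{-1}$, the purely holomorphic/antiholomorphic derivatives $\del^pH$, $\dbar^qH$, and the curvature $\K$ evaluated on $\Z$ (here no derivatives of $\K$ are needed since $\a_l-1,\b_l-1\le 0$). Hence, on $\Z$, $JH$ is determined by $\K|_{\Z}$ together with the holomorphic data $\del^p H|_\Z$, $p\in\{0,(1,0),(0,1)\}$; since $\tilde\K|_\Z=\K|_\Z$, the two Grammians $JH$ and $J\tilde H$ differ only through this holomorphic part. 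The standard manoeuvre — identical to the $k=2$ hypersurface case in \cite{EQHMII} — is then to solve for a holomorphic $\si$ on $\Z$ so that $\J(\si)|_\Z$ absorbs precisely this discrepancy; concretely one first matches the $(0,0)$ entries to define $|\si_{00}|^2=\tilde\rho_{00}/\rho_{00}$ (choosing a holomorphic branch of $\si_{00}$, possible after shrinking the neighbourhood and because $\rho_{00}>0$), then matches the first-order entries to read off $\si_{10},\si_{01}$ as holomorphic functions on $\Z$, and finally checks that the remaining entries are automatically consistent thanks to the curvature equality. This is essentially the content of equations analogous to the curvature formula $\K=\dbar(H^{-1}\del H)$ in \eqref{locK} together with \eqref{ddbm}, now read as a system of equations on $\Z$.

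The ``if'' direction is the easier one: if \eqref{em} holds with $\Psi=\J(\si)|_\Z$ for a holomorphic $\si$, then, since congruence by a holomorphic invertible matrix does not change the curvature of the associated bundle (the curvature $\dbar(\del H\cdot H^{-1})$ transforms by conjugation with a \emph{holomorphic} factor, whose $\dbar$-derivative vanishes), one recovers $\K=\tilde\K$ on all of $\Z$; alternatively one simply substitutes \eqref{em} directly into \eqref{locK} and uses that $\del\si$, $\dbar\bar\si$ behave as in \eqref{covd1}. I expect the main obstacle to be the ``only if'' direction, and within it the bookkeeping needed to show that once $\si_{00},\si_{10},\si_{01}$ are chosen to match the $(0,\ast)$ and $(\ast,0)$ entries of the transformed Grammian, the interior entries (the $(1,1)$, $(1,2)$, $(2,2)$ blocks, indexed by pairs of multi-indices of length $1$) come out correctly; this is exactly where the hypothesis $\K|_\Z=\tilde\K|_\Z$ must be used, via the dependency structure of Lemma \ref{cume}(ii), and where the presence of two transversal directions (rather than one, as in \cite{EQHMII}) makes the algebra heavier though not conceptually different.
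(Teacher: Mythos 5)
Your ``if'' direction is fine only in its fallback form: substituting \eqref{em} into the quotient-rule expressions $\K_{i\ov{j}}=[\rho\,\del_i\dbar_j\rho-\dbar_j\rho\,\del_i\rho]\rho^{-2}$ restricted to $\Z$ is exactly what the paper does, component by component, using the Taylor coefficients $\rho_{(\a)(\b)}$. Your primary argument, however, does not work as stated: ``congruence by a holomorphic invertible matrix does not change the curvature'' refers to the curvature of the rank-$3$ jet bundle over $\Z$ (tangential derivatives of $JH$), whereas the lemma concerns the curvature of the line bundle $E$, whose transversal components $\K_{1\ov{1}},\K_{1\ov{2}},\K_{2\ov{2}}$ sit \emph{algebraically} inside the Grammian rather than in its curvature over $\Z$; and the invariance is false for a general holomorphic invertible $\Psi$ (already $\Psi=\mathrm{diag}(a,b,c)$ rescales $\K_{1\ov{1}}|_{\Z}$ by $|b|^2/|a|^2$), so the specific lower-triangular shape with equal diagonal entries must be fed into the computation — which is precisely the direct check you relegated to an ``alternatively''.

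The genuine gap is in the ``only if'' direction. You propose to define $\s_{00}$ by $|\s_{00}|^2=\tilde\rho|_{\Z}/\rho|_{\Z}$, ``choosing a holomorphic branch, possible after shrinking the neighbourhood and because $\rho_{00}>0$''. Positivity is not enough: a positive real-analytic function on $\Z$ is of the form $|e^{\sigma}|^2$ with $\sigma$ holomorphic exactly when its logarithm is pluriharmonic, and this is where the tangential--tangential part of the hypothesis $\K|_{\Z}=\tilde\K|_{\Z}$ must be used (in the paper: $\del_i\dbar_j\G|_{\Z}=0$ for $i,j\geq 3$, with $\G=\log(\tilde\rho/\rho)$, giving $\G_{(0,0)(0,0)}=\sigma_1+\ov{\sigma_2}$). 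Similarly, $\s_{10},\s_{01}$ cannot simply be ``read off'' from the first-order entries: their holomorphicity on $\Z$ is equivalent to $\dbar_j\G_{(1,0)(0,0)}=\dbar_j\G_{(0,1)(0,0)}=0$ for $j\geq 3$, i.e.\ to the mixed transversal--tangential curvature equalities, and the consistency of the $(1,1)$, $(1,2)$, $(2,2)$ entries uses the transversal--transversal equalities $\G_{(1,0)(1,0)}=\G_{(0,1)(0,1)}=\G_{(1,0)(0,1)}=0$. Your appeal to Lemma \ref{cume}(ii) only says $JH|_{\Z}$ is determined by $\K|_{\Z}$, $H^{-1}$ and the unbarred/barred first derivatives; but $\del_1\rho|_{\Z}$, $\del_2\rho|_{\Z}$ are \emph{not} holomorphic functions on $\Z$, so the assertion that the two Grammians ``differ only through the holomorphic part'' is exactly what remains to be proved. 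The paper's mechanism — write $\tilde\rho=r\rho$, expand $\G=\log r$ in the transversal variables, translate the three blocks of curvature identities into statements about the coefficients $\G_{\a\b}$, then exponentiate to get $\s_{00}=e^{\sigma}$, $\s_{10}=e^{\sigma}\b$, $\s_{01}=e^{\sigma}\eta$ — is the missing idea; without it (or an equivalent), your construction of $\Psi$ does not go through.
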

Before going into the proof of the lemma let us give an application of it as follows.

\begin{thm}
Suppose that $\HM$ and $\tilde{\HM}$ are in $B_1(\O)$. Then the quotient modules $\HM_q$ and $\tilde{\HM}_q$ are isomorphic if and only if the corresponding curvature tensors $\K$ and $\tilde{\K}$ of the line bundles $E$ and $\tilde{E}$, respectively, are equal on $\Z$.
\end{thm}

\begin{proof}
In fact, Theorem $\ref{qm}$ provides that equivalence of $\HM_q$ and $\tilde{\HM}_q$ is same as the equivalence of $J(\HM)|_{\Z}$ and $J(\tilde{\HM})|_{\Z}$. So let us begin with an isometric module map $\Psi : J(\HM)|_{\Z} \ra J(\tilde{\HM})|_{\Z}$. Since $\Psi$ intertwines the module action $\Psi$ is of the form given in $\eqref{msi}$. Since $\Psi$ intertwines the module action $\Psi$ satisfies the following equation
$$ \Psi \begin{pmatrix}
f & 0 & 0\\
\del_1f & f & 0\\
\del_2 f & 0 & f\\
\end{pmatrix}
=\begin{pmatrix}
f & 0 & 0\\
\del_1f & f & 0\\
\del_2 f & 0 & f\\
\end{pmatrix}\Psi,$$
for every $f\in\A(\O)$. Putting suitable functions in the equation above we see that $\Psi$ takes the form given in $\eqref{msi}$. Moreover, being an isometry, $\Psi$ satisfies \beq\label{ek} JK|_{\Z} = \Psi\cdot J\tilde{K}|_{\Z}\cdot \Psi^*\eeq which is equivalent to saying that $\Psi$ satisfies the identity $\eqref{em}$ on $\Z$ as, for $z \in \Z$, $\rho(z)$ is nothing but $K(z,z)$. Then the Lemma $\ref{trial}$ proves the necessity part.


Conversely, since $\K=\tilde{\K}$ on $\Z$, it follows from Lemma $\ref{trial}$ that $\Psi$ is of the form given in $\eqref{msi}$ and satisfies $\eqref{ek}$. Consequently, $\Psi$ is an isometry from $J(\HM)|_{\Z}$ onto $J(\tilde{\HM})|_{\Z}$ and intertwines the module action.
 \end{proof}

\begin{proof}[Proof of Lemma $\ref{trial}$]
Note that if $\rho$ and $\tilde{\rho}$ are equivalent hermitian metrics satisfying the equation \eqref{em}, then the equality of $\K$ and $\tilde{\K}$ on $\Z$ is a direct consequence of the definition of curvature form of a \h. So we only consider the forward direction in which case we have to find $\s_{00},\s_{10},\s_{01}$ holomorphic on $\Z$ such that $\eqref{em}$ holds assuming that $\K$ and $\tilde{\K}$ are equal along $\Z$.


Let $\r=r\cdot\rho$ and $\G=\log r$. Then $\G$ is real analytic function on $\O$. Therefore, $\Gamma$ can be expanded in the power series
\beq\label{GPE}\G(z',z'')=\sum_{\a,\b=0}^{\infty}\G_{\a\b}(z''){z'}^{\a}\ov{z'}^{\b} \eeq
where $\a,\b$ are multi-indices, ${z'}^{\a}={z_1}^{\a_1}{z_2}^{\a_2}$, $\ov{z'}^{\b}={\ov{z_1}}^{\b_1}{\ov{z_2}}^{\b_2}$ and $z''=(z_3,\hdots,z_m)$. Paraphrasing the assumption on $\K$ and $\tilde{\K}$ we have that $\del_i\dbar_j\G=0$, for $1 \leq i,j \leq m$, along $\Z$. We now separate out this into following three different cases.
\begin{itemize}
\item[\textbf{I.}] ($\del_i\dbar_j\G=0$ along $\Z$, for $i=1,2$, $j=3,\hdots,m$) It follows from $\eqref{GPE}$ that $\del_1\dbar_j\G|_{\Z}=0$ for $j=3,\hdots,m$ is equivalent to the fact that $\G_{(1,0)(0,0)}$ is holomorphic on $\Z$. Similarly considering the case with $i=2$,  we get $\G_{(0,1)(0,0)}$ is also holomorphic on $\Z$.
\item[\textbf{II.}] ($\del_i\dbar_j \G =0$ along $\Z$, for $i,j=1,2$) In view of the equation $\eqref{GPE}$, it yields that $\G_{\a\b}=0$ on $\Z$ for $\a,\b\in\{(1,0),(0,1)\}$.
\item[\textbf{III.}] ($\del_i\dbar_j\G=0$ along $\Z$, for $i,j=3,\hdots,m$) In this last case, we have $\del_i\dbar_j\G|_{\Z}=0$, $i,j=3,\hdots,m$ which together with power series expansion of $\G$ yield that $\del_i\dbar_j\G_{(0,0)(0,0)}=0$, for $i,j=3,\hdots,m$, on $\Z$. Since $\Z$ is a complex submanifold with coordinates $z=(0,0,z_3,\hdots,z_m)\in\Z$ the above equations together imply that $\G_{(0,0)(0,0)}(z'')=\s_1(z'')+\ov{\s_2}(z'')$,  for $z''\in\Z$ and some holomorphic functions $\s_1,\s_2$ on $\Z$.
\end{itemize}

\noindent Now, substituting the above coefficients in the equation $\eqref{GPE}$ and noting that $\G$ is real valued, we have
$$
\G(z',z'')= \s_1+\b_1 z_1+\eta_1 z_2+\ov{\s_2}+\ov{\b_2 z_1}+\ov{\eta_2 z_2}+\text{ (terms of degree}\geq 3)
$$
where $\s_i,\b_i,\eta_i$, $i=1,2$, are holomorphic functions on $\Z$. Since $\G$ is a real valued function $\G=\frac{\G+\ov{\G}}{2}$ and hence we have
\beq
\G(z',z'')= \s+\b z_1+\eta z_2+\ov{\s}+\ov{\b z_1}+\ov{\eta z_2}+\text{ (terms of degree}\geq 3)
\eeq
where $\s=\frac{\s_1+\s_2}{2}$, $\b=\frac{\b_1+\b_2}{2}$ and $\eta=\frac{\eta_1+\eta_2}{2}$. So from the definition of $\G$ we can write
\Bea
r &=& \exp\G\\
&=& |\exp\s|^2\cdot|(1+\b z_1+\ov{\b z_1}+|\b|^2 z_1\ov{z}_1+\cdots)|^2\cdot(|1+\eta z_2+\ov{\eta z_2}+|\eta|^2 z_2\ov{z}_2+\cdots)|^2\cdots\\
&=& |\exp\s|^2\cdot(1+\b z_1+\eta z_2+\ov{\b z_1}+\ov{\eta z_2}+|\b|^2 z_1\ov{z}_1+\b\ov{\eta}z_1\ov{z}_2+\ov{\b}\eta\ov{z}_1 z_2+|\eta|^2 z_2\ov{z}_2+\cdots)
\Eea
Thus, putting the above expression of $r$ in $\r=r\cdot\rho$ and equating the coefficients of $\r$ and $\rho$ we see that $\s_{00},\s_{10}$ and $\s_{01}$ with $$\s_{00}=\exp\s,\s_{10}=\exp\s\b,\s_{01}=\exp\s\eta $$ verify the equation $\eqref{msi}$.
\end{proof}

It would be nice if one could carry forward the arguments used in the proof of Lemma $\ref{trial}$ to achieve similar results in the case of arbitrary order of vanishing of vector valued functions. However, for general $k$, it would be cumbersome to continue the calculation done in the above lemma. On the other hand, application of normalized frames makes the calculations simpler and enables us to get a conceptual proof in the general case as well. We adopt the idea of using a normalized frame from \cite{ALOTCD} in our case to provide the geometric invariants for quotient modules using jet bundle construction relative to a smooth complex submanifold of codimension $d$. To this extent, the following theorem provides the required dictionary between the analytic theory and geometric theory for quotient modules obtained from submodules consisting of vector valued holomorphic functions on $\O$ vanishing along a smooth complex submanifold of codimension $d$. As mentioned earlier in this section, from now on we will assume (without lose of generality) that $\O\subset\C^m$ contains the origin and the $\Z$ is the coordinate plane $$\Z := \{z=(z_1,\cdots,z_m) \in \O : z_1=\cdots=z_d=0\}.$$

\begin{thm}\label{anvsgeo}
Let $\O$ and $\Z$ be as above. Assume that both $\HM$ and $\tilde{\HM}$ contain $\C[z_1,\hdots,z_m]$ and are in $\mathrm B_r(\O)$. Then the quotient modules ${\HM}_q$ and $\tilde{\HM}_q$ are equivalent as modules over $\A(\O)$ if and only if the jet bundles $J^{(k)}E|_{\Z}$ and $J^{(k)}\tilde{E}|_{\Z}$ are equivalent where $E$ and $\tilde{E}$ are the {\h s} over $\O$ corresponding to Hilbert modules $\HM$ and $\tilde{\HM}$, respectively.
\end{thm}

\begin{proof}
Let $\HM_q$ and $\tilde{\HM}_q$ be unitarily equivalent. Then there exists a unitary $U:J(\HM)|_{\Z}\ra J(\tilde{\HM})|_{\Z}$ such that $$U(\J f\otimes I_r)|_{\Z}(\textbf{h}|_{\Z})=(\J f\otimes I_r)|_{\Z}U(\textbf{h}|_{\Z}),~\text{for}~\textbf{h}\in J(\HM).$$ In particular, for $f=z_i$, $i=1,\hdots,m$, $\cap_{i=d+1}^m\ker(M^*_{z_i}-\ov{w}_i)\cap\ker(M^*_{z''})^{\b}$ is preserved by $U$ where $\ker(M^*_{z''})^{\b}$ is $\cap_{|\b|=k}\ker((M^*_{z_1})^{\b_1}\cdots(M^*_{z_d})^{\b_d})$ and $\b=(\b_1,\hdots,\b_d)\in(\N\cup\{0\})^d$. Therefore, it follows from Proposition \ref{generalized eigenspace} that the map $\Phi:J^{(k)}E|_{\Z}\ra J^{(k)}\tilde{E}|_{\Z}$ defined by $\Phi(\del^{\a}s_i(0,w''))=U(\del^{\a}\tilde{s}_i(0,w''))$ is a jet bundle isomorphism.


Conversely, since span$\{\del^{\a}s_i(w):1\leq i\leq r,~w\in\Z,~0\leq |\a|\leq k-1\}$ and span$\{\del^{\a}\tilde{s}_i(w):1\leq i\leq r,~w\in\Z,~0\leq |\a|\leq k-1\}$ are dense in $\HM_q$ and $\tilde{\HM}_q$, respectively, any jet bundle isomorphism between $J^{(k)}E|_{\Z}$ and $J^{(k)}\tilde{E}|_{\Z}$ defines a unitary module map between $\HM_q$ and $\tilde{\HM}_q$.
\end{proof}

We now determine the geometric invariants of quotient modules $\HM_q$ by studying the geometry of the jet bundles $J^{(l)}E|_{\Z}$, for $0 \leq l \leq k$. Before proceeding further, let us recall a fact from complex analysis.

\begin{lem}\label{lem1}
Let $\O \subset \C^m$ be a domain and $f(z,w)$ be a function on $\O \times \O$ which is holomorphic in $z$ and anti-holomorphic in $w$. If $f(z,z)=0$ for all $z \in \O$, then $f(z,w)=0$ identically on $\O$.
\end{lem}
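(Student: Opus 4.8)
The plan is to reduce the two-variable statement to a one-variable analyticity argument by a standard polarization trick. First I would observe that holomorphicity in $z$ and anti-holomorphicity in $w$ means that, fixing a point $w_0 \in \O$, the function $z \mapsto f(z, w_0)$ is holomorphic on $\O$, and fixing $z_0 \in \O$, the function $w \mapsto f(z_0, w)$ is anti-holomorphic on $\O$ (equivalently, $w \mapsto \ov{f(z_0,w)}$ is holomorphic). The hypothesis is only that $f$ vanishes on the diagonal $\{(z,z): z \in \O\}$, which is a \emph{totally real} submanifold of $\O \times \O$ of real dimension $2m$ (half the real dimension of $\O \times \O$), so one cannot directly invoke the identity theorem; the diagonal is not open and does not have any complex tangent directions. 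This is exactly why the separate holomorphic/anti-holomorphic dependence must be exploited.

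The key step is the following. Fix an arbitrary point $a \in \O$ and choose a polydisc $\D_a$ centered at $a$ with $\D_a \subset \O$. For $w$ in a small neighbourhood of $\ov{a}$, consider the function $g_w(z) := f(z, w)$, which is holomorphic in $z$ on $\D_a$; also consider $h_z(w) := f(z,w)$, which is anti-holomorphic in $w$. The hypothesis gives $f(z,z) = 0$ for all $z$. Now I would fix $z$ real-analytically and note that the map $(z,w) \mapsto f(z, \ov{w})$ wait --- more cleanly: define $F(z,u) := f(z, \bar u)$ on $\O \times \O^*$ where $\O^* = \{\bar u : u \in \O\}$; since $f$ is anti-holomorphic in its second slot, $F$ is holomorphic in both $z$ and $u$ jointly on the open set $\O \times \O^*$. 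The diagonal condition $f(z,z)=0$ becomes $F(z, \bar z) = 0$ for all $z \in \O$, i.e. $F$ vanishes on the antidiagonal $\{(z, \bar z)\}$. I would then argue that $F$, being holomorphic on $\O \times \O^*$, has a convergent Taylor expansion $F(z,u) = \sum_{\a,\b} c_{\a\b}(z - a)^\a (u - \bar a)^\b$ near $(a, \bar a)$, and the vanishing condition $\sum_{\a,\b} c_{\a\b}(z-a)^\a \ov{(z - a)}^\b = 0$ for all $z$ near $a$ forces every coefficient $c_{\a\b} = 0$ by the standard fact that the monomials $\{(z-a)^\a \ov{(z-a)}^\b\}$ are linearly independent as real-analytic functions (one reads off $c_{\a\b}$ by applying $\del^\a \dbar^\b$ at $z = a$). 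Hence $F \equiv 0$ near $(a, \bar a)$, so $f \equiv 0$ near $(a, \bar a)$ in the relevant domain; since $a \in \O$ was arbitrary and $\O$ is connected, the ordinary identity theorem for the holomorphic function $F$ on $\O \times \O^*$ gives $F \equiv 0$, i.e. $f \equiv 0$ on $\O \times \O$.

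Concretely the write-up would go: (1) introduce $F(z,u) = f(z,\bar u)$ and check it is holomorphic on the domain $\O \times \O^*$, which is connected since $\O$ is; (2) translate $f(z,z)=0$ into $F(z,\bar z) = 0$; (3) fix $a \in \O$, expand $F$ in a joint Taylor series about $(a,\bar a)$, and use the linear independence of $\{(z-a)^\a \ov{(z-a)}^\b\}$ (equivalently, apply mixed partials $\tfrac{\del^{|\a|+|\b|}}{\del z^\a \del \bar z^\b}$ and evaluate at $a$) to conclude all coefficients vanish; (4) deduce $F$ vanishes on a nonempty open subset of the connected domain $\O \times \O^*$ and invoke the identity theorem to get $F \equiv 0$, hence $f \equiv 0$. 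The only mild subtlety --- and the part I would be most careful about --- is step (1)/(3): making sure the passage to the variable $u = \bar w$ genuinely produces a jointly holomorphic function (this is immediate from the hypothesis that $f$ is anti-holomorphic in $w$), and being precise that the combination $\sum c_{\a\b}(z-a)^\a\ov{(z-a)}^\b$ vanishing identically in $z$ really does kill each $c_{\a\b}$; both are routine but should be stated cleanly. No serious obstacle is expected --- this is a classical lemma (often phrased as ``a real-analytic function that is the restriction to the diagonal of a function holomorphic in $z$ and antiholomorphic in $w$ determines that function'').
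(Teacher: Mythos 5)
Your proof is correct, and it is the classical polarization argument: pass to $F(z,u)=f(z,\bar u)$, which is (jointly, by Hartogs' theorem if one starts from separate holomorphy) holomorphic on $\O\times\O^*$, expand in a joint Taylor series at $(a,\bar a)$, and kill each coefficient $c_{\a\b}$ by applying $\del^{\a}\dbar^{\b}$ to the identically vanishing real-analytic function $\sum c_{\a\b}(z-a)^{\a}\ov{(z-a)}^{\b}$, then finish with the identity theorem on the connected domain $\O\times\O^*$. The paper itself omits the proof of this lemma, citing it as well known, so there is no competing argument to compare with; your write-up is exactly the standard one, and the only point to state explicitly is the upgrade from separate to joint holomorphy that justifies the joint Taylor expansion.
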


Since this lemma is well-known \cite[Proposition 1]{DATOBS} we omit the proof. We use this lemma several times in the proof of the following theorems.


We observe that a {\h} can not have a holomorphic orthonormal frame in general. Instead one can have (Lemma 2.4 of \cite{CGOT}) a holomorphic frame on a {\nbhd} of a point which is orthonormal at that point. Then using the technique of the proof of Lemma 2.4 in \cite{CGOT} in a similar way, we have the following existence of normalized frame of a {\h} over $\O$ along a submanifold of codimension at least $d$ in $\O$. In the following proposition we use the notation $z=(z',z'')$ where $z'=(z_1,\hdots,z_d)$ and $z''=(z_{d+1},\hdots,z_m)$.

\begin{prop}\label{prop1}
Let $E$ be a {\h} of rank $r$ over a bounded domain $\O\subset \C^m$ containing $0$ and $\Z$ be as above. Then there is a holomorphic frame $\f(z',z'')=\{s_i(z',z'')\}_{i=1}^r$ on a {\nbhd} of the origin in $\O$ such that $((\<\del^{\a} s_i(0,z''),s_j(0,0)\>))_{i,j=1}^r$ is the zero matrix for $\a \in (\N\cup\{0\})^d$ and $((\<s_i(0,z''),s_j(0,0)\>))_{i,j=1}^r$ is the identity matrix on $\Z$.
\end{prop}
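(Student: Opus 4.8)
The plan is to deduce the proposition from the existence of a holomorphic frame of $E$ that is normalised at the \emph{single} point $0$ --- i.e. one whose matrix-valued Grammian against the fibre of $E$ over $0$ is identically constant --- since both assertions then follow at once by restricting to $\Ze$ and differentiating. This is exactly the strategy behind Lemma~2.4 of \cite{CGOT}; the only new point is to keep the normal variables $z'=(z_1,\dots,z_d)$ visible so that the derivative statement can be extracted.

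First I would fix an arbitrary holomorphic frame $e=\{e_1,\dots,e_r\}$ of $E$ on a {\nbhd} of $0$. Because $E$ is the {\h} attached to the reproducing kernel Hilbert module $\HM$ (Remark~\ref{modvec}), each fibre of $E$ is a subspace of the fixed Hilbert space $\HM$, so the matrix-valued function $G(z):=((\langle e_i(z),e_j(0)\rangle))_{i,j=1}^r$ is holomorphic in $z$ near $0$ (in the general case one uses instead the sesqui-analytic extension of the real-analytic metric, as in \cite{CGOT}). The value $G(0)$ is the Grammian of $e$ at $0$, hence Hermitian and positive definite; therefore $G(z)$ is invertible, with $G(z)^{-1}$ holomorphic, on a possibly smaller {\nbhd} $U$ of $0$.

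Next I would pass to the new holomorphic frame $\f:=e\cdot F$ on $U$, where $F(z):=\big(G(z)^{-1}\big)^{\top}D$ and $D\in GL_r(\C)$ is a constant matrix to be determined. A direct expansion of the change-of-frame formula gives $((\langle s_i(z),s_j(0)\rangle))_{i,j=1}^r=F(z)^{\top}G(z)\overline{F(0)}$, and substituting the chosen $F$ collapses the right-hand side to the \emph{constant} matrix $D^{\top}G(0)^{-1}\overline{D}$. Since $G(0)^{-1}$ is positive definite, one can pick $D$ so that $D^{\top}G(0)^{-1}\overline{D}=I_r$; with this choice the frame $\f$ satisfies
\[
\langle s_i(z),s_j(0)\rangle=\delta_{ij}\qquad\text{for all }z\in U,\ 1\le i,j\le r ,
\]
and in particular $\f$ is holomorphic on a full {\nbhd} of $0$ in $\O$.

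Finally, the two claims are read off from this identity. Setting $z=(0,z'')\in\Ze$ immediately gives that $((\langle s_i(0,z''),s_j(0,0)\rangle))_{i,j=1}^r$ is the identity matrix along $\Ze$. Applying the holomorphic operator $\del^l$ in the $z_1,\dots,z_d$-directions, for any $l\ge1$, to the identity $\langle s_i(z),s_j(0)\rangle\equiv\delta_{ij}$ --- noting that it acts only on the first slot, $s_j(0)$ being a fixed vector --- yields $\langle \del^l s_i(z),s_j(0)\rangle\equiv 0$, and restricting once more to $z=(0,z'')$ shows $((\langle \del^l s_i(0,z''),s_j(0,0)\rangle))_{i,j=1}^r$ is the zero matrix along $\Ze$. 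I expect the only delicate spots to be the bookkeeping of transposes and conjugates in the change-of-frame identity and the accompanying existence of $D$ --- which is where positive-definiteness of $G(0)$ is used, just as in \cite[Lemma~2.4]{CGOT} --- together with the (routine) observation that everything here is local around $0$, so that no global defining functions for $\Ze$, and no compatibility across charts, are needed.
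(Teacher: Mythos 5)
Your proposal is correct, and it is exactly the route the paper intends: the paper gives no written proof of this proposition, only invoking the technique of Lemma~2.4 of \cite{CGOT}, and your construction (replacing an arbitrary holomorphic frame $e$ by $e\cdot(G^{-1})^{\top}D$ with $G(z)=((\langle e_i(z),e_j(0)\rangle))$, so that $\langle s_i(z),s_j(0)\rangle\equiv\delta_{ij}$ near $0$) is precisely that standard normalized-frame/normalized-kernel argument, with the two stated properties following by restriction to $\Ze$ and by differentiating in the normal variables against the fixed vector $s_j(0,0)$. Note that you in fact prove the slightly stronger statement on a full neighbourhood of the origin, which subsumes the proposition.
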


We say a frame is \textbf{\textit{normalized at origin along $\Z$}} if it satisfies the properties in the above proposition.

\begin{thm}\label{jean}
Let $\O,\Z$ be as above. Assume that both $\HM$ and $\tilde{\HM}$ contain $\C[z_1,\hdots,z_m]$ and are in $\mathrm B_1(\O)$. Then $\HM_q$ and $\tilde{\HM}_q$ are unitarily equivalent as modules over $\A(\O)$ if and only if $\del^{\a}\dbar^{\b}\norm{\tilde{s}}^2=\del^{\a}\dbar^{\b}\norm{s}^2 $ on $\Z$ for all $\a,\b \in A=\{\a \in (\N\cup\{0\})^d:|\a|< k\}$ where $\{s(z)\}$ and $\{\tilde{s}(z)\}$ are frames of the line bundles $E$ and $\tilde{E}$ on $\O$ associated to the Hilbert modules $\HM$ and $\tilde{\HM}$, respectively, normalized at origin along $\Z$.
\end{thm}

\begin{proof}
Following Theorem $\ref{anvsgeo}$ it is enough to prove that there exists a jet bundle isomorphism $\Phi:J^{(k)}E|_{\Z}\ra J^{(k)}\tilde{E}|_{\Z}$ if and only if $\del^{\a}\dbar^{\b}\norm{\tilde{s}}^2=\del^{\a}\dbar^{\b}\norm{s}^2 $ on $\Z$ for all $\a,\b \in A$.


We start with the necessity. Let $\Phi:J^{(k)} E|_{\Z}\ra J^{(k)}\tilde{E}|_{\Z}$ be a jet bundle isomorphism which can be represented by a complex matrix $((\phi_{\a\b}))_{\a,\b\in A}$ in $M_{|A|}(\C)$ \w the frames $\{\del^{\a} s(0,z'')\}_{\a \in A}$ and $\{\del^{\a}\tilde{s}(0,z'')\}_{\a \in A}$ where $\phi_{\a\b}$ are holomorphic functions on $\Z$. Consequently, by Definition $\ref{jbi}$, we have the following two matrix equations on $\Z$:
\begin{equation}\label{jme}
((\langle\del^{\a} s,\del^{\b} s\rangle))_{\a,\b \in A} = ((\phi_{\g\d}))_{\g,\d \in A}((\langle\del^{\a}\tilde{s},\del^{\b}\tilde{ s}\rangle))_{\a,\b \in A}((\phi_{\g\d}))_{\g,\d \in A}^*
\end{equation}
\begin{equation}\label{me}
((\phi_{\g\d}))_{\g,\d \in A}((\J(f)_{\a\b}))_{\a\b}=((\J(f)_{\a\b}))_{\a,\b \in A}((\phi_{\g\d}))_{\g,\d \in A}.
\end{equation}
The proof of the forward direction then easily follows from the following claims.


\textbf{Claim 1.}
Let $\a,\b \in A$, with $\a=(\a_1,\hdots,\a_d)$ and $\b=(\b_1,\hdots,\b_d)$. For $z \in \Z$, we have that
\begin{equation}
  \phi_{\a\b}(0,z'')=\left\{
  \begin{array}{@{}ll@{}}
    {\a \choose \a-\b}\phi_{(\a-\b)0}(0,z'') & \text{if}\,\,\, \a_t \geq \b_t \,\,\,\forall\,\, t=1,\cdots,d, \\
    0 & \text{otherwise}.
  \end{array}\right.
\end{equation}

We note from the equation $\eqref{me}$ that, for fixed $\a,\b \in A$ and $f(z',z'')=z_1^{\b_1}\cdots z_d^{\b_d}$,  
\Bea\sum_{\g \in A}\phi_{\a\g}\J(z_1^{\b_1}\cdots z_d^{\b_d})_{\g 0} &=& \sum_{\g \in A} \J(z_1^{\b_1}\cdots z_d^{\b_d})_{\a\g}\phi_{\g 0}\Eea
on $\Z$. If $\a_t \geq \b_t$, for $t=1,\hdots,d$, only non-zero entry on the left hand side occurs for $\g =\b$ and on the right for $\g=\a-\b$. On the other hand, while $\a_t < \b_t$ for some $t \in \{1,\hdots,d\}$ the right hand side vanishes for all $\g_t \geq 0$. This verifies the Claim 1.


Thus Claim 1 shows that the matrix $((\phi_{\a\b}(0,z'')))_{\a,\b \in A}$ is a lower triangular matrix. Consequently, we have that $\Phi$ induces bundle morphisms $\Phi|_{J^{(l)}E|_{\Z}}:J^{(l)}E|_{\Z}\ra J^{(l)}\tilde{E}|_{\Z}$, for $0 \leq l \leq k$.


\textbf{Claim 2.} $\phi_{00}$ is a constant function and $\phi_{\a\a}=\phi_{00}$, for $\a \in A$, on $\Z$.


Following Claim 1 it is enough to show that $\phi_{00}$ is a constant function on $\Z$. In fact, from the equation $\eqref{jme}$ we have that
$$\<s(0,z''), s(0,z'')\>=\phi_{00}(0,z'')\<\tilde{s}(0,z''),\tilde{s}(0,z'')\>\ov{\phi_{00}(0,z'')}.$$
Consequently, Lemma $\ref{lem1}$ and Proposition $\ref{prop1}$ together yield that $$\phi_{00}(0,z'')\ov{\phi_{00}(0,0)}=1$$ completing the proof of Claim 2.


\textbf{Claim 3.} $((\phi_{\a\b}(0,z'')))_{\a,\b \in A} = \phi_{00}\cdot I$ where $I$ is the $|A|\times |A|$ identity matrix where $|A|$ is the cardinality of the set $A$.


In view of Claim 1 and Claim 2, it is enough to show that $\phi_{\a0}=0$, for $\a \in A$, on $\Z$. So calculating $\a0$-th entry of the matrices in the equation $\eqref{jme}$ and using the Lemma $\ref{lem1}$ we have
$$\langle\del^{\a} s(0,z''), s(0,w'')\rangle=\left(\sum_{\g \in A} \phi_{\a\g}(0,z'')\langle\del^{\g}\tilde{s}(0,z''), \tilde{s}(0,w'')\rangle\right)\ov{\phi}_{00}.$$ Consequently, after putting $w''=0$ and applying the Proposition $\ref{prop1}$ to the frames $\{s\}$ and $\{\tilde{s}\}$ at the origin we get $\phi_{\a0}(0,z'')=0$ on $\Z$.

\noindent Thus Claim 1, Claim 2, Claim 3 and the equation $\eqref{jme}$ together yield that
\beq\label{jec} \del^{\a}\dbar^{\b}\norm{s(0,z'')}^2=\phi_{00}\del^{\a}\dbar^{\b}\norm{\tilde{s}(0,z'')}^2 \ov{\phi}_{00}=\del^{\a}\dbar^{\b}\norm{\tilde{s}(0,z'')}^2\eeq on $\Z$, for $\a,\b \in A$.


The converse statement is easy to see. Indeed, if the equation $\eqref{jec}$ happens to be true then the desired jet bundle isomorphism $\Phi$ is given by the constant matrix $I$ \w the frames $\{\del^{\a} s\}_{\a \in A}$ and $\{\del^{\a} \tilde{s}\}_{\a \in A}$ where $I$ is the identity matrix of order $|A|$.
\end{proof}


\begin{thm}\label{jean1}
Let $\O,\Z$ be as above. Assume that both $\HM$ and $\tilde{\HM}$ contain $\C[z_1,\hdots,z_m]$ and are in $\mathrm B_r(\O)$. Then $\HM_q$ and $\tilde{\HM}_q$ are unitarily equivalent as modules over $\A(\O)$ if and only if there exists a constant unitary matrix $D$ such that  $\del^{\a}\dbar^{\b} H = D(\del^{\a}\dbar^{\b}\tilde{H})D^* $ on $\Z$, for all $\a,\b \in A=\{\a \in (\N\cup\{0\})^d:|\a|< k\}$ where $H(z)$ and $\tilde{H}(z)$ are the Gramian matrices for the holomorphic frames $\textbf{s}$ and $\tilde{\textbf{s}}$ of the {\h s} $E$ and $\tilde{E}$ on $\O$ associated to the Hilbert modules $\HM$ and $\tilde{\HM}$, respectively, normalized at origin along $\Z$.
\end{thm}

\begin{proof}
To begin with, let $\Phi:J^{(k)} E|_{\Z}\ra J^{(k)}\tilde{E}|_{\Z}$ be a jet bundle isomorphism. Then $\Phi$ can be represented by a $|A| \times |A|$ block matrix $((\Phi_{\a\b}))_{\a,\b \in A}$ \w the frames $\{\del^{\a} \f(0,z'')\}_{\a \in A}$ and $\{\del^{\a}\tilde{\f}(0,z'')\}_{\a \in A}$ where $\Phi_{\a\b}$ are holomorphic $r \times r$ matrix valued functions on $\Z$ and $|A|$ is the cardinality of $A$. The fact that $\Phi$ is an isometry of two jet bundles $J^{(k)} E|_{\Z}$ and $J^{(k)}\tilde{E}|_{\Z}$ translates to the following matrix equation on $\Z$:
\begin{equation}\label{jme1}
((\del^{\a}\dbar^{\b} H))_{\a,\b \in A} = ((\Phi_{\a\b}))_{\a,\b \in A}((\del^{\a}\dbar^{\b}\tilde{ H}))_{\a,\b \in A}(((\Phi_{\a\b}))_{\a,\b \in A})^*.
\end{equation}


\noindent Let $E_i|_{\Z}$ and $\tilde{E}_i|_{\Z}$ be the line bundles determined by the frames $\{s_i\}$ and $\{\tilde{s}_i\}$, respectively, on $\Z$, for $1 \leq i \leq r$. Note that the decompositions $E|_{\Z}=\oplus_{i=1}^r E_i|_{\Z}$ and $J^{(k)}E|_{\Z}=\oplus_{i=1}^rJ^{(k)}E_i|_{\Z}$ with $\{\del^{\a} s_i\}_{\a \in A}$ as a frame on $\Z$ are evident. Also, let $P_i:J^{(k)}E|_{\Z}\ra J^{(k)}E_i|_{\Z}$ and $\tilde{P}_i:J^{(k)}\tilde{E}|_{\Z}\ra J^{(k)}\tilde{E}_i|_{\Z}$ be the projection morphisms where the frame $\{\del^{\a} \tilde{s}_i\}_{\a \in A}$ defines the jet bundle $J^{(k)}\tilde{E}_i|_{\Z}$. It is clear that the matrix of $\Phi$ \w the frames $J(\f)=\{\sum_{\a \in A}\del^{\a} s_1\otimes \e_{\a},\hdots,\sum_{\a \in A}\del^{\a} s_r\otimes \e_{\a}\}$ and $J(\tilde{\f})=\{\sum_{\a}\del^{\a} \tilde{s}_1\otimes \e_{\a},\hdots,\sum_{\a}\del^{\a} \tilde{s}_r\otimes \e_{\a}\}$ is $(([P_{ij}]))_{i,j=1}^r$ where $[P_{ij}]$ represents the matrix of $\tilde{P}_i\Phi P^*_j$ \w the frames $\{\del^{\a} s_j\}_{\a \in A}$ and $\{\del^{\a} \tilde{s}_i\}_{\a \in A}$. Since $\Phi$ is a jet bundle isomorphism (Definition $\ref{jbi}$) it intertwines the module action on the class of holomorphic sections of $J^{(k)} E|_{\Z}$ and $J^{(k)}\tilde{E}|_{\Z}$. As a consequence, we have $$(([P_{ij}]))_{i,j=1}^r(\J(f)\otimes I_r) =  (\J(f)\otimes I_r)(([P_{ij}]))_{i,j=1}^r,$$ for $f \in \A(\O)$, which is equivalent to the fact that the bundle morphisms $\tilde{P}_i\Phi P^*_j$ intertwine the module action $\eqref{modacsec}$ on holomorphic sections of $J^{(k)} E_j|_{\Z}$ and $J^{(k)}\tilde{E}_i|_{\Z}$. Thus, $\tilde{P}_i\Phi P^*_j$ defines a jet bundle morphism from $J^{(k)}E_j|_{\Z}$ onto $J^{(k)}\tilde{E}_i|_{\Z}$.

We, therefore, can apply Claim 1 in Theorem $\ref{jean1}$ to $\tilde{P}_i\Phi P^*_j$ to conclude, for $\a,\b \in A$, that
\Bea
[P_{ij}]_{\a\b} &=& {\a \choose \a-\b}[P_{ij}]_{(\a-\b)0}(0,z''), ~(\a - \b)\in (\N \cup \{0\})^d\\
&=& {\a \choose \a-\b}(\Phi_{(\a-\b)0}(0,z''))_{ij}, ~(\a - \b)\in (\N \cup \{0\})^d,
\Eea otherwise, $[P_{ij}]_{\a\b}$ is the zero matrix.
It follows that the matrix of $\Phi(0,z'')$ \w the frames $\{\del^{\a}\f\}_{\a \in A}$ and $\{\del^{\a}\tilde{\f}\}_{\a \in A}$ is a lower triangular block matrix with $\Phi_{\a\a}(0,z'')=\Phi_{00}(0,z'')$ for $\a \in A$ and, for $\a,\b \in A$, $\a=(\a_1,\hdots,\a_d),\b=(\b_1,\hdots,\b_d)$ and $1 \leq i,j \leq r$,
\beq\label{mb}
(\Phi_{\a\b}(0,z''))_{ij}={\a \choose \a-\b}(\Phi_{(\a-\b)0}(0,z''))_{ij} \text{  if } (\a - \b)\in (\N \cup \{0\})^d,
\eeq
and is zero, otherwise, on $\Z$. A similar proof as in Claim 2 in Theorem $\ref{jean}$ with matrix valued holomorphic functions $H, \tilde{H}$ and $\Phi_{00}$ on $\Z$ yields that $\Phi_{00}$ is a constant unitary matrix. Thus the proof will be done once we prove that $\Phi_{\a0}=0$, for $\a \in A$, on $\Z$. Computing the $\a0$-th block of the matrices in the equation $\eqref{jme1}$ and using the Lemma $\ref{lem1}$ we get
$$((\langle\del^{\a} s_i(0,z''), s_j(0,w'')\rangle))_{i,j=1}^r=\left(\sum_{t=0}^{\a} \Phi_{\a\b}(0,z'')((\langle\del^{\b}\tilde{s}_i(0,z''), \tilde{s}_j(0,w'')\rangle))_{i,j=1}^r\right)\Phi^*_{00}.$$ Consequently, after putting $w''=0$ and applying Proposition $\ref{prop1}$ to the frames $\f$ and $\tilde{\f}$ at the origin we get $\Phi_{\a0}(0,z'')=0$ on $\Z$. Thereby from $\eqref{jme1}$ we have
\beq\label{jec1}\del^{\a}\dbar^{\b} H(0,z'') = D(\del^{\a}\dbar^{\b} \tilde{H}(0,z''))D^*\eeq on $\Z$ for all $\a,\b \in A$ where $D=\Phi_{00}$.


For the converse direction, note that the equation $\eqref{jec1}$ canonically gives rise to the jet bundle isomorphism $\Phi$ by prescribing the matrix of $\Phi$ as $D\otimes I$ \w the frames $J(\f)$ and $J(\tilde{\f})$ where $I$ is the identity matrix of order $|A|$.
\end{proof}

\begin{cor}
Let $\textbf{T}=(T_1,\hdots,T_m)$ and $\tilde{\textbf{T}}=(\tilde{T}_1,\hdots,\tilde{T}_m)$ be two $m$-tuples of operators in $\mathrm B_1(\O)$. Then $\textbf{T}$ and $\tilde{\textbf{T}}$ are unitarily equivalent if and only if there are jet bundle isomorphisms $\Phi_k:J^{(k)} E|_{\Z}\ra J^{(k)}\tilde{E}|_{\Z}$, for every $k \in \N \cup \{0\}$ where $\Z$ is any singleton set $\{p\}$, for $p \in \O$.
\end{cor}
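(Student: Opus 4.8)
The plan is to show that each of the two sides of the asserted equivalence is, in turn, equivalent to the single statement that the curvature $(1,1)$-forms of the Hermitian holomorphic line bundles $E$ and $\tilde E$ associated with $\textbf{T}$ and $\tilde{\textbf{T}}$ coincide on all of $\O$. That $\textbf{T}$ and $\tilde{\textbf{T}}$ are unitarily equivalent precisely when $\K_{i\ov j}\equiv\tilde\K_{i\ov j}$ on $\O$ for $1\le i,j\le m$ is the classical rigidity theorem for the Cowen--Douglas class $B_1(\O)$ recalled right after Definition $\ref{CDC}$; here, following Remark $\ref{modvec}$, $E$ is the eigenbundle of $\textbf{T}$ over $\O$ (pulled back along $w\mapsto\ov w$) with the global holomorphic frame $\f=\{s\}$, $s(w)=K(\cdot,\ov w)$, and similarly for $\tilde E$, $\tilde\f=\{\tilde s\}$. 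Since the reproducing kernels are real-analytic and $\O$ is connected, the functions $\K_{i\ov j}-\tilde\K_{i\ov j}$ are real-analytic on the connected set $\O$, hence vanish identically if and only if all of their holomorphic and antiholomorphic partial derivatives vanish at the single point $p$. Thus $\textbf{T}$ and $\tilde{\textbf{T}}$ are unitarily equivalent if and only if the full jets of $\K$ and $\tilde\K$ at $p$ agree.

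Next I would translate the existence of all the jet bundle isomorphisms $\Phi_k$ into a statement about the metric jets at $p$. Take the admissible chart $\phi(z)=z-p$ at $p$ for $\Ze=\{p\}$; here the codimension is $d=m$, there is no $z''$-variable, and a frame ``normalized at origin'' in the sense of Proposition $\ref{prop1}$ is exactly a frame $\f$ with $\langle\del^l s(p),s(p)\rangle=0$ for every $l\ge 1$ and $\langle s(p),s(p)\rangle=1$, i.e. the usual Cowen--Douglas normalized frame at $p$. With such frames $\f,\tilde\f$, a map between $J^kE|_{\text{res}\Ze}$ and $J^k\tilde E|_{\text{res}\Ze}$ in the sense of Definition $\ref{jbi}$ is, over the single point $p$, an isometric linear isomorphism of the fibres -- with respect to the Grammians $((\del^l\dbar^t H(p)))_{l,t=0}^N$ and $((\del^l\dbar^t\tilde H(p)))_{l,t=0}^N$, where $N={m+k-1 \choose k-1}-1$ -- that intertwines the finite-dimensional module action $f\mapsto\J(f)(p)$. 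Claims~1--3 in the proof of Theorem $\ref{jean}$ use only the subdiagonal structure of $\J(f)$ and Lemma $\ref{lem1}$, both of which persist over a point; running them verbatim, with the $z''$-variable absent, shows that such a $\Phi_k$ exists if and only if $\del^l\dbar^j\norm{s}^2(p)=\del^l\dbar^j\norm{\tilde s}^2(p)$ for all $0\le l,j\le N$. As $k$ ranges over $\N\cup\{0\}$ the index set $\{0,\dots,N\}$ exhausts all monomial derivatives in $m$ variables, so the $\Phi_k$ exist for every $k$ if and only if $\del^\a\dbar^\b H(p)=\del^\a\dbar^\b\tilde H(p)$ for all multi-indices $\a,\b$.

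Finally I would close the loop using Lemma $\ref{cume}$. In a frame normalized at $p$ the pure derivatives satisfy $\del^\a H(p)=\dbar^\b H(p)=0$ for $\a,\b\ne 0$ and $H(p)=H^{-1}(p)=1$, and likewise for $\tilde H$. Part (i) of Lemma $\ref{cume}$ expresses every $\del^l\dbar^t\K_{i\ov j}(p)$ as a polynomial in $H^{-1}(p)$ and the derivatives $\del^p\dbar^q H(p)$, while part (ii) conversely expresses every $\del^\a\dbar^\b H(p)$ as a polynomial in $H^{-1}(p)$, the pure derivatives of $H$ at $p$, and lower-order derivatives of $\K$ at $p$; since the curvature is frame-independent, one may compute $\K$ and $\tilde\K$ in these normalized frames. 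An induction on $|\a|+|\b|$ then yields that the full metric jets of $H$ and $\tilde H$ at $p$ (in normalized frames) agree if and only if the full curvature jets of $\K$ and $\tilde\K$ at $p$ agree. Chaining the three equivalences proves the corollary; the same argument, run through Theorem $\ref{jean1}$ instead of Theorem $\ref{jean}$, gives the analogous statement for tuples in $B_r(\O)$.

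I expect the main obstacle to be this last step: checking that the mutual determination of normalized metric jets and curvature jets at $p$ is genuinely an ``if and only if'', which means running the Lemma $\ref{cume}$ induction carefully, in particular for mixed multi-indices $\a,\b$ some of whose components vanish, where the pure-derivative bookkeeping and the curvature recursion must be interleaved, and verifying that the normalization removes exactly the residual freedom (a holomorphic change of frame). A secondary point to handle with care is the degenerate reading of the jet-bundle machinery at $\Ze=\{p\}$ -- that Proposition $\ref{prop1}$ and Claims~1--3 of the proof of Theorem $\ref{jean}$ specialize correctly to a point, even though membership in $B_{1,k}(\O,\Ze)$ is only nominal there.
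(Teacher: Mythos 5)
Your proposal is correct, but its second half takes a genuinely different route from the paper. Both arguments begin the same way: specializing Theorem \ref{jean} (with $d=m$, frames normalized at the point as in Proposition \ref{prop1}, and Claims 1--3 surviving verbatim over a singleton) to convert the existence of all the $\Phi_k$ into equality of \emph{all} mixed derivatives $\del^{\a}\dbar^{\b}\norm{s}^2(p)=\del^{\a}\dbar^{\b}\norm{\tilde s}^2(p)$. From there the paper stays at the level of the metric: real-analyticity of $\norm{s}^2$ and $\norm{\tilde s}^2$ gives $\norm{s}^2=\norm{\tilde s}^2$ on a neighbourhood of $p$, the map $s\mapsto\tilde s$ is then an isometric holomorphic bundle isomorphism, and the Rigidity theorem of \cite{CGOT} finishes the proof --- no curvature computation is needed, and the necessity direction is dispatched as trivial (a unitary intertwiner induces the jet bundle isomorphisms directly, as in Theorem \ref{eqm}). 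You instead interpose the equivalence ``normalized metric jets at $p$ $\Longleftrightarrow$ curvature jets at $p$'' via Lemma \ref{cume} (legitimate in rank one, where covariant and ordinary derivatives of $\K_{i\ov j}$ coincide and the normalization kills the pure derivatives of $H$), and then use real-analyticity at the curvature level together with the line-bundle curvature criterion for unitary equivalence in $B_1(\O)$. This is correct, and it has the merit of making explicit the ``Taylor series expansion'' property that the paper only records in the subsequent Remark; the price is the extra induction through Lemma \ref{cume} (which you rightly flag as the delicate bookkeeping step) and a necessity direction that is proved the hard way, through the full chain of equivalences, where a one-line argument suffices. Your degenerate reading of the jet-bundle machinery at $\Ze=\{p\}$ matches what the paper implicitly does when it cites Theorem \ref{jean} for a point, so no gap there.
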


\begin{proof}
The necessity part is trivial and so we only show that $\textbf{T}$ and $\tilde{\textbf{T}}$ are unitarily equivalent assuming that there are jet bundle isomorphisms $\Phi_k:J^k E|_{\Z}\ra J^k\tilde{E}|_{\Z}$, for every $k \in \N \cup \{0\}$.


Let $E$ and $\tilde{E}$ be vector bundles over $\O$ corresponding to operator tuples $\textbf{T}$ and $\tilde{\textbf{T}}$, respectively, and $\Z=\{0\}$. Note that the codimension of $\Z$ is $m$. Let $s$ and $\tilde{s}$ be frames for $E$ and $\tilde{E}$, respectively, normalized at origin. Using Theorem $\ref{jean}$ we have, for every $k \in \N \cup \{0\}$, that 
\beq\label{tc}\del^{\a_1}_1\cdots\del^{\a_m}_m\dbar^{\b_1}_1\cdots\dbar^{\b_m}_m\norm{s(0)}^2 &=&
\del^{\a_1}_1\cdots\del^{\a_m}_m\dbar^{\b_1}_1\cdots\dbar^{\b_m}_m\norm{\tilde{s}(0)}^2\eeq
 for all $\a,\b \in A(k)$ where $A(k)= \{\a\in (\N\cup\{0\})^d:|\a|<k\}$.


Since $s$ and $\tilde{s}$ both are holomorphic on their domains of definition, $\norm{s}^2$ and $\norm{\tilde{s}}^2$ are real analytic there. Consequently, using the power series expansion of $\norm{s}^2$ and $\norm{\tilde{s}}^2$ together with the equation $\eqref{tc}$ we obtain that $$\norm{s(z)}^2=\norm{\tilde{s}(z)}^2$$ on some open {\nbhd}, say $\O_0$, of the origin in $\O$. Thus the bundle map $\Phi:E\ra \tilde{E}$ determined by the formula $\Phi(s(z))=\tilde{s}(z)$ defines an isometric bundle isomorphism between $E$ and $\tilde{E}$ over $\O_0$. Then the result is a direct consequence of the Rigidity theorem in \cite{CGOT}.
\end{proof}

\begin{rem}
Note that the theorem above shows that the unitary equivalence of local operators (1.5 in \cite{CGOT}) $N^{(k)}_{\o_0}$ and $\tilde{N}^{(k)}_{\o_0}$ corresponding to $\textbf{T}$ and $\tilde{\textbf{T}}$, respectively, for all $k\geq 0$ but at a fixed point $\o_0 \in \O$ implies the unitary equivalence of $\textbf{T}$ and $\tilde{\textbf{T}}$. In other words, any $m$-tuples of operators $\textbf{T}\in \mathrm B_1(\O)$ enjoy the "Taylor series expansion" property. Moreover, following the technique used in Theorem 18 in \cite{ALOTCD}, it is seen that the same property is also enjoyed by any $\textbf{T}\in \mathrm B_r(\O)$, $r \geq 1$.
\end{rem}

The following theorem is one of the main results in this article which generalizes the study of quotient modules done in the paper \cite{EQHMII} to the case of arbitrary codimension. For the definition of bundle maps used in the following theorem, we refer the readers to the equation $\eqref{covd1}$ in Section \ref{prelim}.

\begin{thm}\label{mthm}
Let $\O \subset \C^m$ be a bounded domain containing the origin and $\Z \subset \O$ be the complex manifold of codimension $d$ defined by $z_1=\cdots=z_d=0$. Suppose that pair of Hilbert modules $\HM$ and $\tilde{\HM}$ are in $\mathrm B_r(\O)$ and contain $\C[z_1,\hdots,z_m]$. Then $\HM_q$ and $\tilde{\HM}_q$ are isomorphic as modules over $\A(\O)$ if and only if following conditions hold:
\begin{enumerate}
\item[(i)] There exists holomorphic isometric bundle map $\Phi:E|_{\Z}\ra \tilde{E}|_{\Z}$ where $E$ and $\tilde{E}$ are {\h s} over $\O$ corresponding to the Hilbert modules $\HM$ and $\tilde{\HM}$ over $\A(\O)$.
\item[(ii)] The transverse curvature $\dbar_i((\del_j H) H^{-1})$ and $\dbar_i((\del_j\tilde{H})\tilde{H}^{-1})$ of $E$ and $\tilde{E}$, respectively, for $1\leq i,j\leq d$, as well as their covariant derivatives of order at most $k-2$, along the transverse directions to $\Z$, are intertwined by $\Phi$ on $\Z$ where $\textbf{s}=\{s_1,\hdots,s_r\}$ and $\tilde{\textbf{s}}=\{\tilde{s}_1,\hdots,\tilde{s}_r\}$ are frames of $E$ and $\tilde{E}$ normalized at origin along $\Z$, respectively, for $\a \in A$, and $H$ and $\tilde{H}$ are Gramians of $\textbf{s}$ and $\tilde{\textbf{s}}$, respectively.
\item[(iii)] $\Phi$ intertwines the bundle maps $\J_i^{\a}(H):=\dbar_i((\del^{\a}H)H^{-1})$ and $\J_i^{\a}(\tilde{H}):=\dbar_i((\del^{\a} \tilde{H})\tilde{H}^{-1})$, for $d+1 \leq i \leq m$ and $\a \in A$ on $\Z$ where $\textbf{s}=\{s_1,\hdots,s_r\}$ and $\tilde{\textbf{s}}=\{\tilde{s}_1,\hdots,\tilde{s}_r\}$ are frames of $E$ and $\tilde{E}$ normalized at origin along $\Z$, respectively, $H$ and $\tilde{H}$ are Gramians of $\textbf{s}$ and $\tilde{\textbf{s}}$, respectively.
\end{enumerate}
\end{thm}

\begin{rem}\label{remofmainthm}
Although it may seem apparently that the condition (iii) in the theorem above depends on the choice of a frame, it is not the case. For instance, if $\textbf{\textit{t}}$ is another frame normalized at origin along $\Z$ we have $\textbf{\textit{t}}=\textbf{\textit{s}}X$ for some holomorphic function $X:\Z\ra GL_r(\C)$. Since both $\textbf{\textit{s}}$ and $\textbf{\textit{t}}$ are normalized at origin along $\Z$ the same proof as in Claim 2 in Theorem $\ref{jean}$ with matrix valued holomorphic functions shows that $X$ is a constant unitary matrix. Thus we have $H=XHX^*$ and hence it follows that $$\J_i^{\a}(G)=X\J_i^{\a}(H)X^{-1}, d+1 \leq i \leq m,$$ where $G$ is the Gramian matrix of the frame $\textbf{\textit{t}}$.
\end{rem}

\begin{proof}
Let $\O \subset \C^m$ and $\Z \subset \O$ be as given. Suppose that $\HM_q$ and $\tilde{\HM}_q$ are equivalent as modules over $\A(\O)$. Then by Theorem $\ref{jean1}$ there exists a constant unitary matrix $D$ such that \beq\label{dmeq}\del^{\a}\dbar^{\b} H(0,z'')=D(\del^{\a}\dbar^{\b}\tilde{H}(0,z''))D^*,\text{ for }(0,z'')\in \Z \text{ and }\a,\b \in A,\eeq where $H(z)$ and $\tilde{H}(z)$ are the Gramian matrices for holomorphic frames $\textbf{\textit{s}}=\{s_1,\hdots,s_r\}$ and $\tilde{\textbf{\textit{s}}}=\{\tilde{s}_1,\hdots,\tilde{s}_r\}$ for $E$ and $\tilde{E}$ on $\O$ associated to the Hilbert modules $\HM$ and $\tilde{\HM}$, respectively, normalized at origin along $\Z$. In particular, for $\a=\b=0$, $\eqref{dmeq}$ becomes $$H(0,z'')=D\tilde{H}(0,z'')D^*, \text{ for }(0,z'')\in \Z.$$


Let $\Phi:E|_{\Z}\ra\tilde{E}|_{\Z}$ be the bundle morphism whose matrix representation \w the frames $\textbf{\textit{s}}$ and $\tilde{\textbf{\textit{s}}}$ is $D$ providing the desired isometric bundle map in (i). The equation $\eqref{dmeq}$ together with (i) of Lemma $\ref{cume}$ yield (ii), and since $D$ is a constant unitary matrix on $\Z$, (iii) is an easy consequence of $\eqref{dmeq}$ with $\b=0$.


For the converse direction, we show that the condition (i), (ii) and (iii) in the statement together imply the condition of Theorem $\ref{jean1}$. More precisely, we prove that there exists a constant unitary matrix $D$ on $\Z$ such that the equation $\eqref{dmeq}$ holds on the submanifold $\Z$ for $\a,\b \in A$ and frames $\textbf{\textit{s}}$ and $\tilde{\textbf{\textit{s}}}$ of $E$ and $\tilde{E}$, respectively, normalized at origin along $\Z$. We follow two steps in obtaining the matrix $D$. First, we extend the holomorphic isometric bundle map $\Phi: E|_{\Z} \ra \tilde{E}|_{\Z}$, obtained from condition (i), to a family of linear isometries $\hat{\Phi}_{z_0}: J^{(k)}E|_{z_0} \ra J^{(k)}\tilde{E}|_{z_0}$ for every $z_0 \in \Z$. Then this extension is shown to be a jet bundle isomorphism providing our desired matrix. 


Let us begin with frames $\textbf{\textit{s}}$ and $\tilde{\textbf{\textit{s}}}$ for $E$ and $\tilde{E}$, respectively, normalized at $z_0 \in \Z$, for an arbitrary $z_0 \in \Z$. Condition (i) yields an isometric holomorphic bundle map $\Phi: E|_{\Z} \ra \tilde{E}|_{\Z}$ and consequently, we have a holomorphic $r \times r$ matrix valued function $\phi$ on $\Z$ such that \beq\label{meq} H(0,z'') &=& \phi(0,z'')\tilde{H}(0,z'')\phi(0,z'')^*\eeq where $\phi$ represents $\Phi$ \w frames $\f$ and $\tilde{\f}$. Since both $\f$ and $\tilde{\f}$ are normalized at $z_0$, the equation $\eqref{meq}$ shows that $\phi(0,z''_0)$ is a unitary matrix. Furthermore, from condition (ii) of our hypothesis along with second statement of Lemma $\ref{cume}$ we have, for $0 \leq \a_1+\cdots+\a_d \leq k-1,0 \leq \b_1+\cdots+\b_d \leq k-1$, 
\beq\label{jmeq}\del_1^{\a_1}\cdots\del_d^{\a_d}\dbar_1^{\b_1}\cdots\dbar_d^{\b_d}
H(0,z''_0) = \phi(0,z''_0)\del_1^{\a_1}\cdots\del_d^{\a_d}\dbar_1^{\b_1}\cdots
\dbar_d^{\b_d}\tilde{H}(0,z''_0)\phi(0,z''_0)^*\eeq as $\del_1^{\a_1}\cdots\del_d^{\a_d}H(0,z''_0)$ (respectively, $\del_1^{\a_1}\cdots\del_d^{\a_d}\tilde{H}(0,z''_0)$) and $\dbar_1^{\b_1}\cdots
\dbar_d^{\b_d}H(0,z''_0)$ (respectively, $\dbar_1^{\b_1}\cdots
\dbar_d^{\b_d}\tilde{H}(0,z''_0)$) are zero matrices for any $\a_i, \b_i \geq 0$, $i=1,\hdots,d$. Thus the equations above $(\ref{meq},~\ref{jmeq})$ lead to the following natural isometric extension, $\hat{\Phi}_{z_0}: J^{(k)}E|_{z_0} \ra J^{(k)}\tilde{E}|_{z_0}$ defined by
\beq\label{ext} \hat{\Phi}_{z_0}(\del^{\a}s_j(0,z''_0))=\sum_{i=1}^r\phi_{ji}(0,z''_0)\del^{\a}\tilde{s}_i(0,z''_0), \text{ } \a \in A, \text{ } 1 \leq j \leq r.\eeq
We note, for $\a \in A$, $ 1 \leq j \leq r$, $z_0=(0,z''_0)\in \Z$ and $f \in \A(\O)$, that
$$\hat{\Phi}_{z_0}\J(f)(z_0)(\del^{\a} s_j(0,z''_0)) = \hat{\Phi}_{z_0}(\sum_{\b}\del^{\a-\b}f(z_0)\del^{\b} s_j(0,z''_0)) = \J(f)(z_0)(\hat{\Phi}_{z_0}(\del^{\a} s_j(0,z''_0))$$
implying that the extension $\eqref{ext}$ above intertwines the module action $\eqref{modacsec}$ on the sections of $J^{(k)}E$ and $J^{(k)} \tilde{E}$ over $\Z$. From now on, in the rest of the proof, we denote this extension by $\hat{\Phi}$. We also note that $\hat{\Phi}$ is an isometry on $J^{(k)}E|_{\Z}\ra \Z$.


Consider frames $\f$ and $\tilde{\f}$ of $E$ and $\tilde{E}$, respectively, normalized at origin along $\Z$, and $D(0,z''):=((\hat{\Phi}_{\a\b}(0,z'')))_{\a,\b \in A}$ be the matrix of $\hat{\Phi}$ \w the frames $\{\del^{\a} \f(0,z'')\}_{\a \in A}$ and $\{\del^{\a}\tilde{\f}(0,z'')\}_{\a \in A}$. We point out that the $(0,0)$-th block of $D(0,z'')$, namely, $\hat{\Phi}_{00}(0,z'')$ is the matrix representation of $\Phi: E|_{\Z} \ra \tilde{E}|_{\Z}$ \w $\f$ and $\tilde{\f}$, and hence $\hat{\Phi}_{00}(0,z'')$ is holomorphic on $\Z$. So in view of the proof of Claim 2 in Theorem $\ref{jean}$ with matrix valued holomorphic functions, it can be seen that $\hat{\Phi}_{00}(0,z'')$ is a constant unitary matrix, say, $\hat{\Phi}_{00}$ on $\Z$.


We also note, from the construction of $\hat{\Phi}$ above, that $\hat{\Phi}(J^{(l)}E|_{\Z}) \subset J^{(l)}\tilde{E}|_{\Z}$, for $0 \leq l \leq k$. Consequently, $D(0,z'')$ is a lower triangular matrix. Moreover, since $\hat{\Phi}$ commutes with the module action on the sections of $J^{(k)} E|_{\Z}$ and $J^{(k)}\tilde{E}|_{\Z}$, the same proof as in Theorem $\ref{jean1}$ shows that the entries of the matrix satisfy the properties stated in $\eqref{mb}$. So it is enough to show that $\hat{\Phi}_{\a0}(0,z'')=0$ on $\Z$ for $\a \in A$. We prove this with the help of mathematical induction on $|\a|$.


Since $\hat{\Phi}$ is an isometry on $J^{(k)}E|_{\Z}\ra \Z$ it satisfies the equation $\eqref{jme1}$ which leads to 
\Bea\del^{\a}H(0,z'') &=& ((\Phi_{\g\d}(0,z'')))_{\g,\d \in A}\left(\sum_{\tau \in A}\del^{\si}\dbar^{\tau}\tilde{H}(0,z'')\Phi_{0\tau}(0,z'')^*\right)_{\si \in A}\\
&=& \sum_{\si \in A}\hat{\Phi}_{\a\si}(0,z'')\del^{\a}\tilde{H}(0,z'')\hat{\Phi}_{00}(0,z'') 
\Eea
with $\hat{\Phi}_{00}$ being a constant unitary matrix. Here the second equality holds because the matrix $((\hat{\Phi}_{\a\b}(0,z'')))_{\a,\b \in A}$ is lower triangular. Assume that $\hat{\Phi}_{\si0}(0,z'')=0$ on $\Z$ for all multi-indices $\si=(\si_1,\hdots,\si_d)$ with $0 < |\si| < |\a|$. For $|\a|=1$ this assumption is empty and hence automatically fulfilled. We then have on $\Z$ that 
\beq
\nonumber \del^{\a}H(0,z'')\hat{\Phi}_{00}(0,z'') &=& \sum_{\si \leq \a}\hat{\Phi}_{\a\si}(0,z'')\del^{\si}\tilde{H}(0,z'')\\
\nonumber &=&\hat{\Phi}_{\si0}(0,z'')\tilde{H}(0,z'')+\hat{\Phi}_{\a\a}(0,z'')\del^{\a}\tilde{H}(0,z'')\\
\nonumber &+& \sum_{0 < \si < \a}\hat{\Phi}_{\a\si}(0,z'')\del^{\si}\tilde{H}(0,z'').
\eeq 
Since $\hat{\Phi}$ intertwines the module action the equation $\eqref{mb}$ yields that $\hat{\Phi}_{\a\si}(0,z'')=\hat{\Phi}_{(\a-\si)0}(0,z'')$ on $\Z$ whenever $0 \leq \si \leq \a$. Therefore, with the help of induction hypothesis we have $\hat{\Phi}_{\a\si}(0,z'')=0$ provided $0 < \si < \a$. Also, it follows from the above calculation that   
\Bea
\hat{\Phi}_{\a0}(0,z'')&=&\del^{\a}H(0,z'')\hat{\Phi}_{00}(0,z'') \tilde{H}(0,z'')^{-1}-\hat{\Phi}_{00}(0,z'')\del^{\a}\tilde{H}(0,z'')\tilde{H}(0,z'')^{-1}\\
&=& \del^{\a}H(0,z'')\tilde{H}(0,z'')^{-1}\hat{\Phi}_{00}(0,z'') -\hat{\Phi}_{00}(0,z'')\del^{\a}\tilde{H}(0,z'')\tilde{H}(0,z'')^{-1}.
\Eea
Now, for $d+1 \leq i \leq m$, we get
$$\dbar_i\hat{\Phi}_{\a0}(0,z'')= \dbar_i(\del^{\a}H(0,z'')\tilde{H}(0,z'')^{-1})\hat{\Phi}_{00} -\hat{\Phi}_{00}\dbar_i(\del^{\a}\tilde{H}(0,z'')\tilde{H}(0,z'')^{-1})= 0$$
as $\hat{\Phi}_{00}$ is constant. Thus, $\hat{\Phi}_{\a0}(0,z'')$ is holomorphic on $\Z$. Consequently, from the equation $\eqref{mb}$ it follows that $\hat{\Phi}$ is a holomorphic bundle morphism. Now the identity obtained above 
\Bea((\<\del^{\a}s_i(0,z''),s_j(0,z'')\>))_{i,j=1}^r\hat{\Phi}_{00} &=& \hat{\Phi}_{\a0}(0,z'')((\<\tilde{s}_i(0,z''),\tilde{s}_j(0,z'')\>))_{i,j=1}^r\\
&+& \hat{\Phi}_{00}((\<\del^{\a}\tilde{s}_i(0,z''),\tilde{s}_j(0,z'')\>))_{i,j=1}^r\Eea can be polarised using Lemma $\ref{lem1}$ to obtain 
\Bea((\<\del^{\a}s_i(0,z''),s_j(0,0)\>))_{i,j=1}^r\hat{\Phi}_{00} &=& \hat{\Phi}_{\a0}(0,z'')((\<\tilde{s}_i(0,z''),\tilde{s}_j(0,0)\>))_{i,j=1}^r\\
&+& \hat{\Phi}_{00}((\<\del^{\a}\tilde{s}_i(0,z''),\tilde{s}_j(0,0)\>))_{i,j=1}^r.\Eea
This implies that $\hat{\Phi}_{\a0}(0,z'')=0$ since the frames $\f$ and $\tilde{\f}$ are normalized at origin along $\Z$.
\end{proof}

\begin{rem}
(i) From the proof of Theorem $\ref{mthm}$, it is clear that, for $r=1$ and $d=k=2$, conditions (i), (ii) and (iii) of Theorem $\ref{mthm}$ together yield that the curvatures of the bundles $E|_{\Z}$ and $\tilde{E}|_{\Z}$ are equal. Further, the matrix in $\eqref{msi}$ turns out to be the diagonal matrix $\psi_{00}I$ \w a normalized frame at origin where $I$ is the identity matrix of order $3$. Moreover, following the proof of Claim 2 in Theorem $\ref{jean}$ we see that $\psi_{00}$ is a constant function on $\Z$ with $|\psi_{00}|=1$. Thus Theorem $\ref{mthm}$ is exact generalization of Lemma $\ref{trial}$.


(ii) We also note that three conditions $(i),(ii)$ and $(iii)$ listed in the theorem above  correspond to the conditions that the metric of $E$ and $\tilde{E}$ are equivalent to order $k$, in the sense of the paper \cite{EQHMII}, on $\Z$ while the codimension of $\Z$ is $1$. Consequently, following \cite[Remark 6.1]{EQHMII}, we see that the conditions (i), (ii) and (iii) in the above theorem correspond to the equality on $\Z$ of tangential curvatures, transversal curvatures and the second fundamental forms for the inclusions $E|_{\Z} \subset J^{(2)}E|_{\Z}$ and $\tilde{E}|_{\Z}\subset J^{(2)}\tilde{E}|_{\Z}$, or equivalently, the off-diagonal entries of the curvature matrices of the bundles $E$ and $\tilde{E}$, for $k=2$.
\end{rem}

\section{Application}\label{exap}

Let us consider the family of Hilbert modules $\text{Mod}(\D^m):=\{\H^{(\l)}(\D^m):\l=(\l_1,\hdots,\l_m),\l_j>0,~1\leq j\leq m\}$ over the polydisc $\D^m$ in $\C^m$. We now prove that for any pair of tuples $\l=(\l_1,\hdots,\l_m)$ and $\l'=(\l_1',\hdots,\l_m')$, the unitary equivalence of two quotient modules $\H^{(\l)}_q$ and $\H^{(\l')}_q$, obtained from the submodules of functions vanishing of order $2$ along the diagonal set $\Delta$, implies the equality of the Hilbert modules $\H^{(\l)}$ and $\H^{(\l')}$. In other words, the restriction of the curvature of the jet bundle $J^{(2)}E^{(\l)}$ to the diagonal $\Delta$ is a complete unitary invariant for the class $\text{Mod}(\D^m)$ where the jet bundle $J^{(2)}E^{(\l)}$ is defined by the global frame $\{K^{(\l)}(.,\ov{w}),\del_1 K^{(\l)}(.,\ov{w}),\hdots,\del_m K^{(\l)}(.,\ov{w})\}$ where $\del_j$ are the differential operators \w the variable $z_j$, for $j=1,\hdots,m$

\begin{thm}\label{WHM}
For $\l=(\l_1,\hdots,\l_n)$ and ${\l}'=({\l}'_1,\hdots,{\l}'_n)$ with $\l_i,{\l}'_i >0$, for all $i=1,\hdots,n$, the quotient modules $\H^{(\l)}_q$ and $\H^{({\l}')}_q$ are unitarily equivalent if and only if $\l_i={\l}'_i$, for all $i=1,\hdots,n$.
\end{thm}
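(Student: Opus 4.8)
The implication $\a_i=\a_i'$ for all $i$ $\Rightarrow$ $\H^{(\a)}_q\simeq_{\A(\D^n)}\H^{(\a')}_q$ is immediate: then $\H^{(\a)}=\H^{(\a')}$ as Hilbert modules and the submodules of functions vanishing to order $2$ along $\Delta$ coincide. The whole content is the converse, and the plan is to read it off from Theorem \ref{mthm}, applied with rank $r=1$, order of vanishing $k=2$, and submanifold $\Delta\subset\D^n$. After the global linear change of coordinates $u_i=z_i-z_n$ ($1\le i\le n-1$), $u_n=z_n$, which carries $\Delta$ onto the coordinate plane $\{u_1=\cdots=u_{n-1}=0\}$, the codimension is $d=n-1$; assuming, as throughout the paper, that $(\H^{(\a)},\H^{(\a)}_q)\in B_{1,2}(\D^n,\Delta)$ for the weights in question, Theorem \ref{mthm} applies.

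So suppose $\H^{(\a)}_q\simeq_{\A(\D^n)}\H^{(\a')}_q$ and let $E^{(\a)},E^{(\a')}$ be the associated line bundles. By Theorem \ref{mthm} conditions (i)--(iii) hold, and I claim that for $r=1$, $k=2$ they force the curvature $(1,1)$-forms $\K^{(\a)}$ and $\K^{(\a')}$ to agree at every point of $\Delta$. Indeed, for $k=2$ condition (iii) says $\dbar_i(H^{-1}\del^l H)=\dbar_i(\tilde H^{-1}\del^l\tilde H)$ on $\Delta$ for $d+1\le i\le n$ and $0\le l\le N$; for $l=0$ both sides vanish, and for $l=1,\dots,N$ the operator $\del^l$ is a single transverse derivative $\del_j$, so, $H$ being scalar, this is the equality of the mixed curvature components $\K_{j\bar i}=\dbar_i\del_j\log H$ ($j$ transverse, $i$ tangential) on $\Delta$. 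Condition (ii) for $k=2$ involves no covariant derivatives and is the equality of the transverse curvature components $\K_{j\bar j'}$ ($j,j'$ transverse) on $\Delta$. Condition (i) gives an isometric holomorphic bundle isomorphism $E^{(\a)}|_{\text{res}\Delta}\ra E^{(\a')}|_{\text{res}\Delta}$, which for rank one is multiplication by a nowhere-vanishing holomorphic $u$ on $\Delta$ with $|u|^2\norm{\tilde s}^2=\norm{s}^2$; hence $\log\norm{s}^2-\log\norm{\tilde s}^2=\log|u|^2$ is pluriharmonic on $\Delta$ and the tangential curvature components agree there too. Using $\K_{a\bar b}=\overline{\K_{b\bar a}}$, all components $\K_{a\bar b}$ ($1\le a,b\le n$) of the two bundles agree along $\Delta$; for $n=3$ this is exactly the assertion in the Remark following Theorem \ref{mthm}.

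To conclude I would compute these curvatures explicitly. The metric of $E^{(\a)}$ in its natural global holomorphic frame $s$ is $\norm{s(z)}^2=K^{(\a)}(z,z)=\prod_{i=1}^n(1-|z_i|^2)^{-\a_i}$, so $\K^{(\a)}=\dbar(\del\log\norm{s}^2)=-\sum_{i=1}^n\a_i(1-|z_i|^2)^{-2}\,dz_i\wedge d\bar z_i$. At a point $(t,\dots,t)\in\Delta$ this is $-(1-|t|^2)^{-2}\sum_{i=1}^n\a_i\,dz_i\wedge d\bar z_i$, and likewise with $\a_i'$. Since $dz_1\wedge d\bar z_1,\dots,dz_n\wedge d\bar z_n$ are linearly independent $(1,1)$-forms at each point of $\D^n$, the identity $\K^{(\a)}=\K^{(\a')}$ along $\Delta$ forces $\a_i=\a_i'$ for every $i$, which proves the theorem.

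The curvature computation is routine; the point that requires care is the translation of Theorem \ref{mthm}(ii)--(iii) into curvature equalities in the transverse and tangential directions to $\Delta$, together with the bookkeeping of the admissible coordinate chart $(z_1-z_n,\dots,z_{n-1}-z_n,z_n)$. For $k=2$ this is light: no covariant derivatives of the transverse curvature enter and the jet data $\J_i^l(H)$ reduce to first-order derivatives of $\log\norm{s}^2$. Running the same strategy for $k\ge 3$ would force one to carry along covariant derivatives of the transverse curvature of $E^{(\a)}$, and that is where the genuine work would lie.
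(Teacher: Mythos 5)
Your proposal is correct and follows essentially the same route as the paper: reduce via an admissible affine chart sending $\Delta$ to a coordinate plane, invoke Theorem $\ref{mthm}$ (for $r=1$, $k=2$) to obtain equality of the full curvature along the diagonal, and then read off the weights from an explicit curvature computation. The only differences are cosmetic — you use the chart $(z_1-z_n,\hdots,z_{n-1}-z_n,z_n)$ and evaluate the (diagonal) curvature matrix in the original coordinates, obtaining $\a_i=\a'_i$ directly, whereas the paper works with $(z_1-z_2,\hdots,z_{m-1}-z_m,z_m)$ and the pulled-back kernels, deducing equality of the partial sums $\sum_{l=1}^i\a_l$; your explicit derivation of the full curvature equality from conditions (i)--(iii) in arbitrary codimension is a welcome elaboration of a step the paper states only briefly.
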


\begin{proof}
The proof of sufficiency is trivial. So we only prove the necessity. Let us begin by pointing out that the diagonal set $\Delta$ in $\D^m$ can be described as the zero set of the ideal $I:=<z_1-z_2,\hdots,z_i-z_{i+1},\hdots,z_{m-1}-z_m>$. It is easy to verify that $\phi: U \ra \C^m$ defined by $$\phi(z_1,\hdots,z_m)=(z_1-z_2,\hdots,z_i-z_{i+1},\hdots,z_{m-1}-z_m,z_m)$$ yields an admissible coordinate system (Definition $\ref{ad}$) around the origin. We choose $U$ small enough so that $\phi(U)\subset \D^m$. A simple calculation shows that $\phi^{-1}:\phi(U)\ra U$ takes the form $$\phi^{-1}(u_1,\hdots,u_m)=(\sum_{j=1}^m u_j, \hdots,\sum_{j=i}^m u_j,\hdots,u_{m-1}+u_m, u_m).$$

\noindent For rest of the proof we pretend $U$ to be $\D^m$ following Remark $\ref{remres}$. Using Proposition $\ref{eococh}$ it is enough to prove that ${\l}_i={\l}'_i$, $i=1,\hdots,m$, provided $\phi^*\H^{(\l)}_q$ is unitarily equivalent to $\phi^*\H^{({\l}')}_q$ where $\phi^*\H^{(\l)}_q$ and $\phi^*\H^{({\l}')}_q$ are the quotient modules obtained from the submodules $\phi^*\H^{(\l)}_0$ and $\phi^*\H^{({\l}')}_0$ of the Hilbert modules $\phi^*\H^{(\l)}$ and $\phi^*\H^{({\l}')}$, respectively.


Note that both $\phi^*\H^{(\l)}$ and $\phi^*\H^{({\l}')}$ are reproducing kernel Hilbert modules with reproducing kernels
$$\mathsf{K}(\textbf{u})=\prod_{i=1}^m\left(1-|{\sum_{j=1}^m u_j}|^2\right)^{-{\l}_i}\,\,\,\text{and}\,\,\,\mathsf{K'}(\textbf{u})=\prod_{i=1}^m\left(1-|{\sum_{j=1}^m u_j}|^2\right)^{-{\l}'_i},$$
respectively, where $\textbf{u}=(u_1,\hdots,u_m)\in \phi(U)$. We also pint out that the submodules $\phi^*\H^{(\l)}_0$ and $\phi^*\H^{({\l}')}_0$ consist of functions in $\phi^*\H^{(\l)}$ and $\phi^*\H^{({\l}')}$, respectively, vanishing along the submanifold $\Z:=\{(0,\hdots,0,u_m):u_m \in \D\}\cap \phi(U)$ of order $2$.


\noindent Since $\phi^*\H^{(\l)}_q$ and $\phi^*\H^{({\l}')}_q$ are unitarily equivalent from Theorem $\ref{mthm}$ it follows that \beq\label{eqn3}\mathcal{K}|_{\Z}&=&\mathcal{K}'|_{\Z}\eeq where $\mathcal{K}$ and $\mathcal{K}'$ are the curvature matrices for the vector bundles $E$ and $E'$ over $\phi(U)$ obtained from the Hilbert modules $\phi^*\H^{(\l)}$ and $\phi^*\H^{({\l}')}$, respectively. By the definition of curvature tensors we have that $\mathcal{K}(\textbf{u})=((\mathcal{K}_{ij}(\textbf{u})))_{i,j=1}^m$ where
$$\mathcal{K}_{ij}(\textbf{u})=\frac{\del^2}{\del u_i\del \ov{u}_j}\log\mathsf{K}(\textbf{u},\textbf{u}),$$ for $\textbf{u}=(u_1,\hdots,u_m)\in \phi(U)$. Thus, for $1 \leq i \leq m$ and $\textbf{u}\in \phi(U)$,
$$
\mathcal{K}_{ii}(\textbf{u}) = \frac{\del^2}{\del u_i\del \ov{u}_i}\log\mathsf{K}(\textbf{u},\textbf{u}) = \sum_{l=1}^i {\l}_l\left(1-|{\sum_{j=l}^m u_j}|^2\right)^{-1}.$$
A similar computation also yields, for $i=1,\hdots,m$ and $\textbf{u} \in \phi(U)$, that
$$\mathcal{K}'_{ii}(\textbf{u}) = \sum_{l=1}^i {\l}'_l\left(1-|{\sum_{j=l}^m u_j}|^2\right)^{-1}.$$
We now note that, for $\textbf{u}\in \Z$, $$\mathcal{K}_{ii}(\textbf{u})=\frac{\sum_{l=1}^i {\l}_l}{(1-|u_m|^2)}\,\,\,\text{and}\,\,\,\mathcal{K}'_{ii}(\textbf{u})=\frac{\sum_{l=1}^i {\l}'_l}{(1-|u_m|^2)}.$$
Thus by using the equality in equation $\eqref{eqn3}$ it is not hard to see that ${\l}_i={\l}'_i$, $i=1,\hdots,m$.
\end{proof}

This example shows that the unitary equivalence of two quotient modules determines the Hilbert modules completely. In general, it seems natural to ask if the unitary equivalence of certain quotient modules implies the unitary equivalence of the Hilbert modules. More precisely, let $\HM$ and $\tilde{\HM}$ be two Hilbert modules in $\mathrm B_r(\O)$ and $1\leq d\leq m$ be any integer. Let $\HM_q(\mathsf{Z})$ (respectively, $\tilde{\HM}_q(\mathsf{Z})$) be the quotient module obtained from the submodule of functions in $\HM$ (respectively, $\tilde{\HM}$) vanishing of order $t$ along a connected complex submanifold $\mathsf{Z}$ of codimension $d$. Then the question of interest is to find conditions on $t$ and $d$ such that the unitary equivalence of $\HM$ and $\tilde{\HM}$ is determined by the unitary equivalence of $\HM_q(\mathsf{Z})$ and $\tilde{\HM}_q(\mathsf{Z})$ for every $d$ codimensional connected complex submanifold $\mathsf{Z}$ in $\O$. This question was studied by Chen and Douglas in \cite{LOCHM} for $d=m$. In our case, we consider any $d$ with $1\leq d\leq m$. We first show, for $r\leq d\leq m$ and $t_0=r+2$, that the unitary equivalence of quotient modules forces the Hilbert modules to be unitarily equivalent. It is also pointed out, for $d<r$, that there exist Hilbert modules $\HM$ and $\tilde{\HM}$ which are not equivalent although the quotient modules $\HM_q(\mathsf{Z})$ and $\tilde{\HM}_q(\mathsf{Z})$ are for $\mathsf{Z}$ of codimension $d$ and $t\leq r+2$. We begin by recalling the following definition and theorem from \cite{EQCN}.

\begin{defn}
Let $E$ and $\tilde{E}$ be two hermitian holomorphic vector bundles over $\O$ and $k$ be a positive integer. Then $E$ and $\tilde{E}$ are equivalent to order $k$ at a point $w\in\O$ if there exists a linear isometry $\Phi_w:E_w\ra\tilde{E}_w$ such that $$\Phi_w\circ\chi|_w=\tilde{\chi}|_w\circ\Phi_w$$ for each covariant derivative of the curvatures $\chi|_w$ and $\tilde{\chi}|_w$ of $E$ and $\tilde{E}$ at $w$, respectively, of total order at most $k$.
\end{defn}

\begin{thm}\cite[Theorem II, 3.10]{EQCN}\label{equivalence of connections}
Let $E$ and $\tilde{E}$ be two hermitian holomorphic vector bundles over $\O$ of rank $r$ which are equivalent to order $r$ at every point in $\O$. Then $E$ and $\tilde{E}$ are locally unitarily equivalent on an open dense subset of $\O$.
\end{thm}

We now present the main theorem in this section which generalizes Theorem 1.6 in \cite{CGOT}.

\begin{thm}\label{localization}
Let $\HM$ and $\tilde{\HM}$ be two Hilbert modules in $\mathrm B_r(\O)$ with $r<m=\text{dim}~\O$ and $S$ be the collection of all connected complex submanifolds $\mathsf{Z}$ in $\O$ of codimension $d$ with $r\leq d\leq m$. Assume also that both $\HM$ and $\tilde{\HM}$ contain $\C[z_1,\hdots,z_m]$. Let $\HM_q(\mathsf{Z})$ (respectively, $\tilde{\HM}_q(\mathsf{Z})$) be the quotient module obtained from the submodule of functions in $\HM$ (respectively, in $\tilde{\HM}_q$) vanishing of order $r+2$ along the submanifold $\mathsf{Z}\in S$. If for every $\mathsf{Z}\in S$ the quotient modules $\HM_q(\mathsf{Z})$ and $\tilde{\HM}_q(\mathsf{Z})$ are unitarily equivalent as Hilbert modules over $\A(\O)$ then $\HM$ is unitarily equivalent to $\tilde{\HM}$ as Hilbert modules over $\A(\O)$.
\end{thm}

\begin{proof}
In view of \eqref{Res} in Section \ref{intro}, it is enough to show that $\HM$ and $\tilde{\HM}$ are locally equivalent which we prove with the help of Theorem \ref{equivalence of connections}. Let $w^{0}$ be an arbitrary but fixed point in $\O$, $I=(i_1,\hdots,i_d)$ with $1\leq i_1<\cdots<i_d\leq m$ and $\a=(\a_1,\hdots,\a_d)\in (\N\cup\{0\})^d$ with $0\leq |\a|\leq r$. Consider the submanifold $\mathsf{Z}\subset \O$ defined as $\mathsf{Z}:=w^0+\Z$ where $\Z$ is the coordinate plane defined by $\Z=\{z\in\O:z_{i_1}=\cdots=z_{i_d}=0\}$. Assume that $\HM_q$ (respectively, $\tilde{\HM}_q$) is the quotient module obtained from the submodule of functions in $\HM$ (respectively, $\tilde{\HM}$) vanishing of order $r+2$ along the submanifold $\mathsf{Z}$.


Let $U$ be an open neighbourhood of origin in $\O$ such that $w^0+U\subset \O$ and $\phi:U\ra\phi(U)$ be the bi-holomorphism defined by $z\mapsto z+w^0$. It follows from Proposition \ref{eococh} that $\phi^*(\HM_q)$ is unitarily equivalent to $\phi^*(\tilde{\HM}_q)$. Note that $\phi^*(\HM_q)$ (respectively, $\phi^*(\tilde{\HM}_q)$) is the quotient module obtained from the submodule of functions in $\phi^*(\HM|_{\phi(U)})$ (respectively, $\phi^*(\tilde{\HM}|_{\phi(U)})$) vanishing of order $r+2$ along the coordinate plane $\Z$. Further, since $\HM$ (respectively, $\tilde{\HM}$) is in $\mathrm B_{r}(\O)$ and $\C[z_1,\hdots,z_m]$ is contained in both $\HM$ and $\tilde{\HM}$, it follows from Theorem \ref{qmodinCDcls} that $\phi^*(\HM|_{\phi(U)})$ (respectively, $\phi^*(\tilde{\HM}|_{\phi(U)})$) is in $\mathrm B_{r}(\phi(U))$. We also point out from the definition of pull back that $\phi^*E|_{\phi(U)}\ra U$ (respectively, $\phi^*\tilde{E}|_{\phi(U)}\ra U$) are the hermitian holomorphic vector bundles associated to $\phi^*(\HM|_{\phi(U)})$ (respectively, $\phi^*(\tilde{\HM}|_{\phi(U)})$) where $E\ra \O$ and $\tilde{E}\ra \O$ are the hermitian holomorphic vector bundles associated to $\HM$ and $\tilde{\HM}$, respectively, and $\phi^*E|_{\phi(U)}\ra U$ (respectively, $\phi^*\tilde{E}|_{\phi(U)}\ra U$)  are the pull back of $E|_{\phi(U)}\ra \phi(U)$ and $\tilde{E}|_{\phi(U)}\ra \phi(U)$.


It follows from part (i) and part (ii) in Theorem \ref{mthm} that there exists a isometric bundle map $\Phi:\phi^*E|_{\Z}\ra \phi^*\tilde{E}|_{\Z}$ which intertwines the transverse curvatures $\dbar_{i_l}((\del_{i_j}\phi^*H)\phi^*H^{-1})$ and $\dbar_{i_l}((\del_{i_j}\phi^*\tilde{H})\phi^*\tilde{H}^{-1})$ of $\phi^*E$ and $\phi^*\tilde{E}$, respectively, for $1\leq j,l\leq d$, as well as their covariant derivatives along $z_{i_1},\hdots,z_{i_d}$ directions up to order $r$ on $\Z$. Here $\phi^*H$ and $\phi^*(\tilde{H})$ are defined as follows: $$\phi^*(H)(z)=H(\phi(z)),~\text{and}~\phi^*(\tilde{H})(z)=\tilde{H}(\phi(z))$$ where $H$ and $\tilde{H}$ are Gramians \w the frames $\{s_1,\hdots,s_r\}$ and $\{\tilde{s}_1,\hdots,\tilde{s}_r\}$ normalized at $w^0$ along $\mathsf{Z}$. Since $D\phi$ is identically identity matrix the curvatures of $\phi^*E|_{\Z}\ra\Z$ (respectively, $\phi^*\tilde{E}|_{\Z}\ra\Z$) and their covariant derivatives of any order at $w\in \Z$ turn out to be those of $E\ra \O$ at $\phi(w)\in\mathsf{Z}$. In particular, we have a linear isometry from $E_{w^0}$ onto $\tilde{E}_{w^0}$ which intertwines the    curvatures $\dbar_{i_l}((\del_{i_j}H)H^{-1})$ and $\dbar_{i_l}((\del_{i_j}\tilde{H})\tilde{H}^{-1})$ of $E$ and $\tilde{E}$, respectively, for $1\leq j,l\leq d$, as well as their covariant derivatives along $z_{i_1},\hdots,z_{i_d}$ directions up to order $r$ at $w^0$. Since $w^0$ was arbitrary and $d>r$ it follows that for every $w\in\O$ there is a linear isometry $V:E_w\ra\tilde{E}_w$ which intertwines the curvatures and their covariant derivatives up to order $r$ at $w$. Therefore, from Theorem \ref{equivalence of connections} we have that $\HM$ and $\tilde{\HM}$ are locally unitarily equivalent.
\end{proof}


\begin{rem}\label{counter example}
Let $\O=\D^2$ and $K((z_1,z_2),(w_1,w_2))=(1-z_1\ov{w}_1)^{-\l}(1-z_2\ov{w}_2)^{-\mu}$ be the weighted Bergman kernel with weights $\l,\mu>0$. Denote $\H^{(\l,\mu)}(\D^2)$ be the weighted Bergman space. Consider the reproducing Hilbert modules $\H_{K_1}$ and $\H_{K_2}$ on $\D^2$ with the reproducing kernels $K_1$ and $K_2$ defined as follows:
$$K_1((z_1,z_2),(w_1,w_2))=\left(\begin{smallmatrix}
K((z_1,z_2),(w_1,w_2)) & 0\\
0 & \del_1^4\dbar_1^4\del_2^4\dbar_2^4K((z_1,z_2),(w_1,w_2))\\
\end{smallmatrix}\right),~\text{and}$$
$$K_2((z_1,z_2),(w_1,w_2))=\left(\begin{smallmatrix}
K((z_1,z_2),(w_1,w_2)) & \dbar_1^4\dbar_2^4K((z_1,z_2),(w_1,w_2))\\
\del_1^4\del_2^4K((z_1,z_2),(w_1,w_2)) & \del_1^4\dbar_1^4\del_2^4\dbar_2^4K((z_1,z_2),(w_1,w_2))\\
\end{smallmatrix}\right).$$
It can be shown that adjoint of the multiplication operators by coordinate functions on both of these Hilbert spaces are in $\mathrm B_2(\D^2)$ which are not unitarily equivalent. But since $K$ is diagonal kernel it turns out that $$((\del_1^i\dbar_1^jK_1((z_1,z_2),(w_1,w_2))))_{i,j=0}^3|_{\Z_l}=((\del_1^i\dbar_1^jK_2((z_1,z_2),(w_1,w_2))))_{i,j=0}^3|_{\Z_l}$$ where $l=1,2$ and $\Z_l$ is the coordinate plane defined by $z_l=0$. Therefore, from Theorem \ref{qm} and Proposition \ref{eococh} that quotient modules obtained from the submodules of functions in $\H_{K_1}$ and $\H_{K_2}$ vanishing of order $4$ along any connected smooth hypersurfaces in $\D^2$ are unitarily equivalent. Thus it shows that the assumption on the relation of the dimension of the domain and the rank of the bundle as mentioned in the theorem above is necessary.
\end{rem}
The author has been benefited from the discussions with Dr.  Soumitra Ghara in concluding that the reproducing kernel $K_2$ in Remark \ref{counter example} corresponds to an operator tuple in $\mathrm B_2(\D^2)$.

\vspace{0.1in}

\textbf{Acknowledgement.} The author is thankful to Professor Gadadhar Misra for his generous support and invaluable comments on this article. The author is also indebted to Dr. Shibananda Biswas for very detailed comments and suggestions in preparation of this paper. 

\end{document}